\documentclass[10pt, a4paper]{amsart}

\usepackage[T1]{fontenc}
\usepackage[utf8]{inputenc}
\usepackage{amsmath, amssymb, amsfonts, amsthm}
\usepackage[margin=2.0cm]{geometry}
\usepackage[]{setspace}
\usepackage[usenames,dvipsnames]{color}
\numberwithin{equation}{section}
\usepackage{dirtytalk}
\usepackage{enumitem}
\usepackage{tikz-cd}
\usepackage{mathrsfs}
\usepackage{xcolor}
\usepackage[toc,page]{appendix}
\usepackage[, unicode]{hyperref}
\hypersetup{
   colorlinks,
   citecolor=olive,
   filecolor=black,
   linkcolor=blue,
   urlcolor=blue
 }%
\usepackage{mathtools,halloweenmath}
\usepackage{mathbbol}
\usepackage{todonotes}
\setlength{\marginparwidth}{1.4cm}
\theoremstyle{plain}
\newtheorem{propo}{Proposition}[section]
\newtheorem{lema}[propo]{Lemma}
\theoremstyle{plain}
\newtheorem{teorema}[propo]{Theorem}
\newtheorem{cor}[propo]{Corollary}
\newtheorem{fact}[propo]{Fact}

\theoremstyle{definition}

\newtheorem{defin}[propo]{Definition}

\newtheorem{remark}[propo]{Remark}


\newcommand{\ol}[1]{\overline{#1}}

\renewcommand{\.}{\ldots}

\newcommand{\dl}{ \Big( \kern-0.6em{ \Big( }}
\newcommand{\dr}{ \Big) \kern-0.6em{ \Big) }}

\newcommand{\sqdl}{ \Big[ \kern-0.55em{ \Big[ }}
\newcommand{\sqdr}{ \Big] \kern-0.55em{ \Big] }}

\DeclareMathOperator{\img}{im} 
\DeclareMathOperator{\Hom}{Hom} 
\DeclareMathOperator{\End}{End} 
\DeclareMathOperator{\Frac}{Frac} 
\renewcommand{\char}{char} 
\DeclareMathOperator{\dprk}{dp-rk} 
\DeclareMathOperator{\Sub}{Sub} 
\DeclareMathOperator{\Dir}{Dir} 
\DeclareMathOperator{\Gl}{GL} 
\DeclareMathOperator{\res}{res} 
\DeclareMathOperator{\val}{val} 
\DeclareMathOperator{\dd}{\partial} 
\DeclareMathOperator{\Mod}{Mod} 
\DeclareMathOperator{\Vect}{Vect} 
\DeclareMathOperator{\gres}{\widehat{res}} 
\DeclareMathOperator{\tr}{tr} 
\DeclareMathOperator{\wt}{wt} 
\DeclareMathOperator{\Stab}{Stab} 

\def\a{\alpha}
\def\b{\beta}
\def\g{\gamma}
\def\G{\Gamma}
\def\d{\delta}

\newcommand{\e}{\varepsilon}
\newcommand{\f}{\varphi}

\renewcommand{\t}{\tau}
\def\T{\Theta} 
\let\lw\l
\renewcommand\l{{\lambda}}
\def\L{\Lambda}
\def\k{\kappa}
\def\s{\varsigma}

\def\E{\exists} 
\def\A{\forall} 

\newcommand{\Sii}{\Longleftrightarrow}


\def\N{\mathbb{N}}

\def\F{\mathbb{F}}

\def\KK{\mathbb{K}}

\def\p{\mathfrak{p}} 
\def\O{\mathcal{O}} 
\def\M{\mathscr{M}} 
\def\Ca{\mathscr{C}}
\def\Da{\mathscr{D}}

\def\Ind#1#2{#1\setbox0=\hbox{$#1x$}\kern\wd0\hbox to 0pt{\hss$#1\mid$\hss}
\lower.9\ht0\hbox to 0pt{\hss$#1\smile$\hss}\kern\wd0}

\def\Notind#1#2{#1\setbox0=\hbox{$#1x$}\kern\wd0\hbox to 0pt{\mathchardef
\nn=12854\hss$#1\nn$\kern1.4\wd0\hss}\hbox to
0pt{\hss$#1\mid$\hss}\lower.9\ht0 \hbox to
0pt{\hss$#1\smile$\hss}\kern\wd0}

\usepackage{fancyhdr}
\usepackage{csquotes}
\usepackage{faktor}
\usepackage[english]{babel}
\newtheoremstyle{named}{}{}{}{}{}{}{.5em}{\normalfont{\thmnumber{#2}} \itshape\thmnote{#3.}}
\theoremstyle{named}
\newtheoremstyle{asd}{}{}{}{}{\scshape}{}{.5em}{\scshape{\thmname{#1}} \normalfont{\thmnumber{#2}}}
\theoremstyle{asd}
\fancypagestyle{mypagestyle}{%

\lhead{$\,$}
\rhead{\Cross}
}
\fancypagestyle{plain}{
\fancyfoot{}
\fancyfoot[LE,RO]{\thepage}
}
\usepackage{titlesec}
\setcounter{tocdepth}{2}
\setcounter{secnumdepth}{4}
\titleformat{\section}
  {\centering\Large\scshape}{\scshape\thesection}{1em}{}
\titleformat{\subsection}{\large\bfseries}{\scshape\thesubsection}{0.5em}{}
\setcounter{tocdepth}{4}
\allowdisplaybreaks

\title[RUNNING TITLE]{\Large Fields of dp-Rank 2 and their \texorpdfstring{$W_2$}{}-Topologies: The Characteristic 2 Case}
\author{PAULO ANDRÉS SOTO MORENO}
\address{Universit\'{e} Paris Cit\'{e}, Sorbonne Universit\'{e}, CNRS, IMJ-PRG, F-75013 Paris, France}
\email{paulo.soto@imj-prg.fr}

\begin{document}

\begin{abstract}
   
   In this note we reproduce Johnson's analysis of $W_2$-topologies on fields of characteristic 2, which was originally stated for fields of characteristic different than 2. Following his framework, we prove that the canonical topology of an unstable field of characteristic 2 and dp-rank 2 is a $V$-topology. Additionally, we show that any $W_2$-topology on a field of characteristic 2 is either induced by the intersection of two valuation rings or it is induced by dense pre-diffeo-valuation data, completing the picture for all positive characteristic fields. 
\end{abstract}
\maketitle

\section{Introduction}

Shelah's Conjecture predicts that any NIP field is either finite, separably closed, real closed, or admits a non-trivial henselian valuation. In a recent series of outstanding papers, Will Johnson proved the conjecture in the important special case of 
fields $\KK$ of finite \emph{dp-rank}, by a comprehensive study along two axes: lattices of type-definable linear sub-spaces of powers of $\KK,$ and definable field topologies of $\KK$ and their \emph{$V$-topological} coarsenings, i.e.~a topological coarsening that is induced either by an absolute value or by a valuation on $\KK.$ The latter point of view is of special interest, because Shelah's Conjecture is a consequence of existence and uniqueness of \emph{definable $V$-topologies} in infinite non-separably closed NIP fields. More precisely, the proof of \cite[Proposition 6.4]{dp4} shows that if an infinite non-separably closed field is NIP, then it is real closed or henselian if it admits a unique definable $V$-topology ---see the proof of Proposition 2.41 of the survey document \cite{anscombe} for more details. In \cite{dp2}, Johnson identified the \emph{canonical topology} of a dp-finite field $\KK$ as the field topology in which basic neighborhoods of $0$ are of the form $X-X$ where $X\subseteq \KK$ is definable and $\dprk(X)=\dprk(\KK).$ 
Afterwards, in \cite{dp5}, he defined a notion of \emph{weight} associated to a ring topology, and defined the class of \emph{$W_n$-topologies} of a field $\KK$ as the set of ring topologies on $\KK$ that have weight at most $n,$ for $n\geq 1.$ Some of the most important features of weights and canonical topologies are the following, cf.~\cite[Theorem 4.10]{dp5} and \cite[Fact 1.7]{dp6}.
\begin{itemize}[wide]
    \item A field topology is a $W_1$-topology if and only if it is a $V$-topology,

    \item The canonical topology of an unstable dp-finite field $\KK$ is definable and its weight is bounded above by $\dprk(\KK),$

    \item The number of $V$-topological coarsenings of a $W_n$-topology is at least 1 and at most $n.$

    \item Any $V$-topology on a dp-finite field is definable if and only if it is a coarsening of the canonical topology.
\end{itemize}

If (the theory of) a field is dp-finite and stable, then it has finite Morley rank and thus it is either finite or algebraically closed, cf.~\cite[Corollary 2.5]{dp2} and \cite[Theorem 1]{macintyre}, so the focus of the classification relies on \emph{unstable} dp-finite fields. 
As a consequence of these statements, if the canonical topology of an unstable dp-finite field is a $V$-topology, then it is in fact the unique definable $V$-topology, and the classification holds. This is the content of the \emph{Valuation Conjecture}: the canonical topology of any unstable dp-finite field is a $V$-topology. It turns out that this conjecture is false, as proven in \cite[Theorem 10.1]{dp4}. Roughly speaking, Johnson constructs an algebraically closed valued field with a derivation, of residue characteristic 0, having dp-rank 2 and whose canonical topology is not a $V$-topology. However, the Valuation Conjecture for fields of dp-rank 2 and \emph{odd} characteristic does hold, as seen in \cite[Theorem 6.29]{dp4}. Moreover, if the characteristic is not 2, Johnson even classifies $W_2$-topologies as the field topologies induced by either an intersection of at most two independent valuation sub-rings, or induced by some differential structure called \emph{dense diffeo-valuation data}, cf.~\cite[Theorem 8.5]{dp5}. This classification theorem is stated for fields of characteristic 0, but its proof holds as well for fields of characteristic different than 2, as observed by Johnson in the comment before \cite[Theorem 1.7]{dp4}.

In this document we give a positive solution to the Valuation Conjecture for fields of dp-rank 2 and characteristic 2, answering a question of Johnson. In the process, we state a classification theorem for $W_2$-topologies on fields of characteristic 2, analogous to Johnson's classification for fields of characteristic not 2. Putting the odd and the even cases together, the main theorems of this document are the following. 

\begin{teorema}[Theorem {\ref{teo1}} for $p=2$ and {\cite[Theorem 6.29]{dp4}} for $p>2$]
    If $K$ is a field of characteristic $p>0$ and of dp-rank at most 2, then $K$ is either stable or its canonical topology is a $V$-topology.
\end{teorema}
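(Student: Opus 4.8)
The plan is to split on the characteristic. For $p>2$ there is nothing new: this is \cite[Theorem 6.29]{dp4}. So the work is the case $p=2$, and I may assume $K$ is unstable, as otherwise there is nothing to prove. Write $\tau$ for the canonical topology of $K$; by \cite[Theorem 4.10]{dp5} together with \cite[Fact 1.7]{dp6}, $\tau$ is a definable field topology with $\wt(\tau)\le\dprk(K)\le 2$. If $\dprk(K)\le 1$, or more generally if $\wt(\tau)\le 1$, then $\tau$ is a $W_1$-topology and hence a $V$-topology, and we are done. So the real task is to show that $\wt(\tau)=2$ is impossible. Here I would apply the classification of $W_2$-topologies on fields of characteristic $2$ proved in this note --- the characteristic $2$ counterpart of Johnson's \cite[Theorem 8.5]{dp5}: such a $\tau$ is induced either (a) by the intersection of two valuation rings of $K$, necessarily independent since $\wt(\tau)=2$, or (b) by dense pre-diffeo-valuation data on $K$.

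Case (a) I would dispatch quickly. The two valuation topologies refined by $\tau$ are distinct $V$-topologies coarser than $\tau$, hence themselves definable, and in characteristic $2$ every $V$-topology comes from a valuation; since definable $V$-topologies on dp-finite fields are henselian, $K$ would carry two independent nontrivial henselian valuations and so be separably closed, contradicting unstability. Thus case (a) cannot occur, and everything comes down to (b).

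Case (b) is where the real content lies and, I expect, the main obstacle. One has to show that over a field of characteristic $2$ no dense pre-diffeo-valuation data gives a genuinely weight-$2$ topology: the derivation underlying the data must be absorbed by the valuation, forcing $\wt(\tau)=1$ after all. This is exactly the point at which Johnson's counterexample \cite[Theorem 10.1]{dp4} uses residue characteristic $0$. In characteristic $2$ a derivation $\partial$ on $K$ annihilates all squares, $K^{2}\subseteq\ker\partial$, and the Frobenius constrains the associated differential data on the value group and residue field so severely that, once the imperfection degree of $K$ is under control, the ``diffeo'' part of the data cannot be dense; by the structure theory of dp-finite fields of positive characteristic this imperfection degree is bounded in terms of the dp-rank, hence small enough when $\dprk(K)=2$. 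Turning this heuristic into a proof --- making precise, in the language of pre-diffeo-valuation data, why the data must collapse $\tau$ to a $V$-topology --- is the technical core, and is the reason one works with the weaker notion of \emph{pre}-diffeo-valuation data in place of Johnson's diffeo-valuation data.

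Once (a) and (b) are settled, $\wt(\tau)=1$, so $\tau$ is a $V$-topology; combined with \cite[Theorem 6.29]{dp4} for $p>2$ this is the theorem. The remaining points to pin down are routine by comparison: the exact statement and source of ``definable $V$-topologies on dp-finite fields are henselian'', and the classical fact that a field with two independent nontrivial henselian valuations is separably closed.
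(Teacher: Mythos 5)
Your route through the classification of $W_2$-topologies (Theorem \ref{classw2}) is genuinely different from the paper's, but as written it has two gaps, and the decisive one is exactly the case you defer. In case (b) you set yourself the task of showing that ``no dense pre-diffeo-valuation data gives a genuinely weight-$2$ topology,'' and you only offer a heuristic for it; but that statement is not what is needed and is in fact false: $DV$-topologies \emph{are} $W_2$-topologies that are not $V^{n}$ for any $n$ (Proposition \ref{dvw2notvn}), so nothing ``collapses to weight $1$.'' What actually kills case (b) is perfectness. In characteristic $2$ the very definition of pre-diffeo-valuation data (Definition \ref{pddvf}) requires $\O^{2}\subseteq Q\subsetneq\O$ --- this inclusion, Proposition \ref{o2inq}, is the hard characteristic-$2$ content of the paper --- so any field carrying dense such data is imperfect (Corollary \ref{perf}, Corollary \ref{perfw2}(1)); on the other hand, fields of finite dp-rank are \emph{perfect} (Lemma \ref{fdpperf}), not merely of ``bounded imperfection degree'' as you suggest. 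So case (b) is finished by this two-line contradiction, not by an argument that the diffeo part cannot be dense; your proposal contains no proof of this step, and the heuristic you sketch aims at the wrong target.

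Case (a) also leans on a statement that is not available: ``definable $V$-topologies on dp-finite fields are henselian'' is not among the quoted facts, and the criterion the paper does invoke (the proof of \cite[Proposition 6.4]{dp4}, cf.\ the survey \cite{anscombe}) yields henselianity only when the definable $V$-topology is \emph{unique} --- precisely what fails if the canonical topology had two independent $V$-topological coarsenings. For comparison, the paper never routes Theorem \ref{teo1} through Theorem \ref{classw2} at all: Fact \ref{2inf} (via pedestals of the lattice of type-definable subspaces and \cite[Corollary 6.12]{dp4}, which absorbs the multi-valuation alternative into ``the canonical topology is already a $V$-topology'' rather than deriving a contradiction from it) produces a $2$-inflator satisfying the Strong Assumptions whenever the field is not of valuation type; then Corollary \ref{perf} gives imperfection and Lemma \ref{fdpperf} the contradiction (this is Proposition \ref{char0}). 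Your classification route can be repaired --- handle (a) by the same \cite[Corollary 6.12]{dp4} mechanism (or a genuine uniqueness statement for definable $V$-topologies) and (b) by Corollary \ref{perfw2}(1) together with Lemma \ref{fdpperf} --- but as it stands both halves of your dichotomy are unproven.
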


\begin{teorema}[Theorem {\ref{classw2}} for $p=2$ and {\cite[Theorem 8.5]{dp5}} for $p>2$ or $p=0$]
    Let $\t$ be a field topology on a field of characteristic $p$. Then $\t$ is a $W_2$-topology if and only if exactly one of the following statements holds:
    \begin{enumerate}[wide]
        \item $\t$ is a $V$-topology.

        \item $\t$ is a strict $V^{2}$-topology.

        \item $\t$ is a $DV$-topology.
    \end{enumerate}
\end{teorema}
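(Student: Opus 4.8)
The plan is to follow the strategy of Johnson's proof of \cite[Theorem 8.5]{dp5}, tracking carefully where characteristic $2$ forces a genuine change of argument; the odd and zero characteristic cases are quoted as they stand. It is convenient to organise the equivalence as a trichotomy controlled by the number $N(\tau)$ of $V$-topological coarsenings of $\tau$, which satisfies $1 \le N(\tau) \le 2$ for any $W_2$-topology by \cite[Theorem 4.10]{dp5} and \cite[Fact 1.7]{dp6}. A $V$-topology is its own unique $V$-topological coarsening, so $N = 1$; a strict $V^2$-topology --- an intersection of two independent valuation topologies, of weight exactly $2$ --- has $N = 2$; and a $DV$-topology, induced by dense pre-diffeo-valuation data, has a unique valuation coarsening, hence $N = 1$, but is strictly finer than it, the density clause forbidding it from being a $V$-topology. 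So the three alternatives are pairwise incompatible, and it remains to show that every $W_2$-topology falls into one of them, and conversely that each of them is a $W_2$-topology.

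For the directions $(1)\vee(2)\vee(3)\Rightarrow W_2$ I would treat the three cases separately. Case $(1)$ is the identification $W_1 = V$ of \cite[Theorem 4.10]{dp5}. For case $(2)$, the weight of an intersection of independent $V$-topologies behaves additively, so $\tau_1\cap\tau_2$ has weight $2$; this uses only intersections of valuation topologies and is insensitive to the characteristic. For case $(3)$ one reads the weight off an explicit basis of neighbourhoods of $0$ attached to the pre-diffeo-valuation data $(v,\partial)$: such a neighbourhood is cut out by a pair of inequalities, one of the shape $v(x)\ge\gamma$ and one of the shape $v(\partial x)\ge\gamma'$, each contributing a single unit of weight, so the weight is at most $2$.

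The substance is the implication $W_2\Rightarrow(1)\vee(2)\vee(3)$ in characteristic $2$. Let $\tau$ be a $W_2$-topology. If $N(\tau)=2$, with $V$-topological coarsenings $\tau_1\ne\tau_2$, then $\tau_1$ and $\tau_2$ are independent, and I would argue from the weight-$2$ hypothesis that there is no room strictly between $\tau$ and $\tau_1\cap\tau_2$: the lattice of $\tau$-infinitesimal neighbourhoods must already decompose as the join of its $\tau_1$- and $\tau_2$-parts, forcing $\tau=\tau_1\cap\tau_2$, a strict $V^2$-topology. Concerning as it does only intersections of valuation topologies, this step transfers essentially verbatim from \cite{dp5}. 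Suppose instead $N(\tau)=1$, and let $\tau_0$ be the unique $V$-topological coarsening, which is the valuation topology of some $v$ with valuation ring $\O$, value group $\Gamma$ and residue field $k$. If $\tau=\tau_0$ we are in case $(1)$; otherwise $\tau$ strictly refines $\tau_0$, and the aim is to exhibit dense pre-diffeo-valuation data inducing $\tau$. Following the template of \cite{dp5} I would: (a) use the weight-$2$ lattice structure on the $\tau_0$-infinitesimals to isolate a second valuation-like gauge which, since $N(\tau)=1$, cannot be a further $V$-topological coarsening and must therefore be of differential type; (b) show that this gauge is realised by a derivation $\partial$ on a suitable henselisation or completion of $K$, the Leibniz compatibility coming from $\tau$ being a ring and not merely a group topology; and (c) verify the density clause, so that $(v,\partial)$ is genuine pre-diffeo-valuation data with topology $\tau$, placing us in case $(3)$.

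I expect step (b), in the case $N(\tau)=1$, to be the main obstacle, and this is precisely where characteristic $2$ bites. In characteristic $\ne 2$ Johnson linearises the extra structure using that $2$ is invertible and that the differential of the squaring map is onto; in characteristic $2$ every derivation kills $K^{2}$, Frobenius is purely inseparable, and the quadratic estimates degenerate, so it is no longer immediate that the excess of $\tau$ over $\tau_0$ is a rank-$1$ differential phenomenon rather than something of higher differential complexity. The resolution --- and the reason for the weaker notion appearing in the statement --- is to replace Johnson's diffeo-valuation data by the coarser pre-diffeo-valuation data, for which the relevant approximation arguments can be carried out with additive (Artin--Schreier and Frobenius-twisted) polynomials in place of quadratic ones; once the rank-$1$ nature of the differential part has been secured this way, the remaining verifications follow the odd-characteristic proof essentially unchanged.
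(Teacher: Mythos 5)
There is a genuine gap, and it sits exactly where you flag it yourself. Your trichotomy-by-coarsenings skeleton is fine as an organizing device, but the entire substance of the theorem in characteristic $2$ is the implication you leave as steps (a)--(c): showing that a weight-$2$ topology which is not multi-valuative is induced by dense pre-diffeo-valuation data. The paper does not do this by finding ``a derivation $\partial$ on a suitable henselisation or completion,'' nor by Artin--Schreier or Frobenius-twisted additive polynomials; in characteristic $2$ the second gauge is \emph{not} a derivation on $\O$ at all. The actual mechanism is: saturate and use local equivalence to reduce to a topology induced by a $W_2$-ring with fraction field $K$; build from a semi-simple quotient of its submodule lattice a malleable $2$-inflator; mutate to an isotypical one; and then run the quadratic-equation analysis (Lemma \ref{quad}, Corollary \ref{spf}, Corollary \ref{intclval}) to show the integral closure of the fundamental ring is a valuation ring, identify its residue field (Proposition \ref{resfield}), and prove the key characteristic-$2$ substitute $\O^{2}\subseteq Q$ (Proposition \ref{o2inq}); only then does the mock $K/\M$ structure (Fact \ref{mkm}) let one recover pre-diffeo-valuation data, where ``$\pi$ is a derivation'' is \emph{replaced in the definition} by $\O^{2}\subseteq Q$ (Theorem \ref{dvi}, Proposition \ref{w2isdvorvn}). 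Your proposal acknowledges this step as ``the main obstacle'' and offers only a speculative and, as it happens, incorrect guess at the fix, so the theorem is not proved.

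Two further points are asserted rather than established. First, your claim that $N(\tau)=2$ forces $\tau=\tau_1\cap\tau_2$ is essentially of the same depth as the classification itself and does not appear ready-made in \cite{dp5}; the paper instead gets the $V$/$V^2$ alternative out of the multi-valuation case of Proposition \ref{w2isdvorvn}. Second, pairwise exclusivity is not ``the density clause'': that a $DV$-topology is not a $V$-topology needs the explicit failure of the $V$-axiom exhibited in Proposition \ref{dvnotv}, and that it is not a strict $V^{2}$-topology needs the good multi-valuation ring argument of Lemma \ref{nogood} via Proposition \ref{dvw2notvn}. Likewise, that a $DV$-topology is a $W_2$-topology is proved through Lemma \ref{qw2} (the ring $Q$ has cube rank $\leq 2$), not by counting inequalities in a neighbourhood basis, since in characteristic $2$ the data is not of the shape $(v,\partial)$ with $\partial$ a derivation on $\O$.
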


Thanks to a personal communication with Will Johnson, we learned that the Valuation Conjecture is in fact true for \emph{any} unstable dp-finite field of positive characteristic, following the original work of Yang Yang's master thesis \cite{yang}. This thesis, however, has not been published at the time of writing of this note.
 
This document closely follows Johnson's analysis of 2-inflators and $W_2$-topologies in \cite{dp4} for fields of odd characteristic, modified for fields of characteristic 2. Inflators were one of the main tools that Johnson used to study dp-finite fields $\KK$, as they appear naturally when studying lattices of type-definable vector subspaces of powers of $\KK.$   
For preliminaries of inflators, dense diffeo-valued fields and $W$-topologies, see \cite{dp3}, \cite{dp4} and \cite{dp5} respectively. In Section \ref{sec2} we gather some definitions and facts about inflators, and we copy verbatim a result of \cite{dp4} that holds in characteristic 2 in order to obtain a formula for a 2-inflator $\s$ that satisfies the so-called \emph{Strong Assumptions,} see Definition \ref{str}. In Section \ref{sec3}, we see how the integral closure of the fundamental ring $R$ of $\s$ is a valuation ring, and study its structure. In Section \ref{sec4}, we use the generalized residue map displayed in Section \ref{sec2} to endow $R$ with a differential structure, and we study its topological and valuative properties. In Section \ref{sec5} we apply this theory to the canonical topology of an unstable field of dp-rank at most 2, and prove our first main theorem. 
In Section \ref{sec6}, we introduce \emph{dense pre-diffeo-valued} fields and study their algebra and topology. In particular, we show that dense pre-diffeo-valuation data can be recovered by any 2-inflator satisfying the Strong Assumptions, which allows us to classify $W_2$-topologies for fields of even characteristic.

\section{Preliminaries}
\label{sec2}

\subsection{Directories and Inflators}

In this section we will summarize the definitions of the objects we are going to use throughout the document, as presented in \cite{dp3}. The reader already familiar with directories, inflators, mutation and malleability can skip ahead to the next section. In what follows, let $k_0$ be an auxiliary field. The reader is welcome to assume that $k_0$ is infinite. 
We call an abelian category $\Ca$ \emph{$k_0$-linear} if for any pair of objects $A,B$ of $\Ca,$ the set of morphisms $\Hom_\Ca(A,B)$ is a vector space over $k_0.$ If $\Ca$ is a $k_0$-linear abelian category and if $A$ is an object thereof, two monomorphisms $X\hookrightarrow A,Y\hookrightarrow A$ are called \emph{equivalent} if there is an isomorphism $X\to Y$ making the diagram 
\begin{center}
    \begin{tikzcd}
X \arrow[rd] \arrow[r, hook] & A                 \\
                             & Y \arrow[u, hook]
\end{tikzcd}
\end{center}
commute. A \emph{subobject} of $A$ is the equivalence class of a monomorphism $X\hookrightarrow A.$ There is a partial order on subobjects of $A$ given by $[X\hookrightarrow A]\leq[Y\hookrightarrow A]$ if and only if there is a monomorphism $X\hookrightarrow Y$ such that the diagram 
\begin{center}
    \begin{tikzcd}
X \arrow[rd, hook] \arrow[r, hook] & A                 \\
                                   & Y \arrow[u, hook]
\end{tikzcd}
\end{center}
commutes.
The poset $\Sub_\Ca(A)$ of subobjects of $A$ admits a structure of bounded lattice, with bottom element $[0\hookrightarrow A],$ top element $[\operatorname{Id}:A\to A],$ meet defined by $[X\hookrightarrow A]\wedge [Y\hookrightarrow A]=[X\times_AY\hookrightarrow A]$ and join defined by $[X\hookrightarrow A]\vee [Y\hookrightarrow A]=[X\oplus Y\hookrightarrow A].$ Note that if $\Ca$ is the category $S$Mod of modules over a ring $S,$ then subobjects of $A$ correspond exactly to $S$-submodules of $A$, and the lattice operations $(\bot,\top,\vee,\wedge)$ correspond exactly to $(0,A,+,\cap).$  
For a given $n>0,$ the poset $\Sub_\Ca(A^{n})$ also admits an action of $\Gl_n(k_0)$ defined as follows. Let $\phi:X\to A^{n}$ be a monomorphism, and let $\pi_i:A^{n}\to A$ be the projection to the $i^{th}$ coordinate. Then there are morphisms $\ol{\pi}=(\pi_1\circ \phi,\.,\pi_n\circ \phi)\in\Hom_\Ca(X,A)^{n}=\Hom_\Ca(X,A^{n}).$ By $k_0$-linearity, $\mu\cdot\ol{\pi}\in\Hom_\Ca(X,A^{n}),$ so we may define $\mu\cdot [X\hookrightarrow A^{n}]$ as the equivalence class of the image of $\mu\cdot\ol{\pi}.$         
The following fact implies that the lattice structure on the subobjects of $A$ is in fact \emph{modular,} meaning that for any pair of elements $x,y$ of the lattice, the maps $-\vee y:[x\wedge y,x]\to[y,x\vee y]$ and $x\wedge-:[y,x\vee y]\to [x\wedge y,x]$ are isomorphisms of bounded lattices.  

\begin{fact}[Mitchel's Embedding Theorem]
\label{mitchel}
    Let $\mathscr{C}$ be a $k_0$-linear abelian category. Then there is a $k_0$-algebra $S$ and a fully faithful exact functor $F:\mathscr{C}\to S\Mod.$ Equivalently, $\mathscr{C}$ is a full subcategory of $S\Mod$ for some $k_0$-algebra $S.$  
\end{fact}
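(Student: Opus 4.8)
This is the Freyd--Mitchell embedding theorem, so the plan is to run its usual proof while keeping everything $k_0$-linear. First I would reduce to the case that $\mathscr{C}$ is essentially small: whenever this Fact is applied only a set of objects of $\mathscr{C}$ is in play at once, so one may replace $\mathscr{C}$ by the smallest full abelian $k_0$-linear subcategory containing them (close under kernels, cokernels and finite biproducts and iterate), and the construction below genuinely needs a \emph{set}, not a proper class, of objects.

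For the main construction, let $\mathscr{A} := \mathrm{Lex}_{k_0}(\mathscr{C}, \Vect_{k_0})$ be the category of $k_0$-linear kernel-preserving functors $\mathscr{C} \to \Vect_{k_0}$, and let $k \colon \mathscr{C}^{\mathrm{op}} \to \mathscr{A}$ be the co-Yoneda functor $C \mapsto k_C := \Hom_{\mathscr{C}}(C, -)$. By the Yoneda lemma $k$ is fully faithful, and I would check that it is \emph{exact}: a short exact sequence $0 \to C' \xrightarrow{i} C \xrightarrow{p} C'' \to 0$ in $\mathscr{C}$ is sent to $0 \to k_{C''} \xrightarrow{k(p)} k_C \xrightarrow{k(i)} k_{C'} \to 0$, in which kernels are computed pointwise (so $\ker k(i) = k_{C''}$), while $k(i)$ is epic in $\mathscr{A}$ because every left-exact $F \in \mathscr{A}$ preserves the monomorphism $i$, whence $F(C') \hookrightarrow F(C)$, i.e.\ $\Hom_{\mathscr{A}}(k_{C'}, F) \hookrightarrow \Hom_{\mathscr{A}}(k_C, F)$. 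One also checks that $\mathscr{A}$ is a $k_0$-linear Grothendieck category---it is a reflective subcategory of the functor category with exact reflector, it is generated by the set $\{k_C\}$, and its filtered colimits are pointwise, hence exact---so it admits an injective cogenerator $E$. Setting $S := \End_{\mathscr{A}}(E)$, a $k_0$-algebra, I would take $F \colon \mathscr{C} \to S\Mod$, $F(C) := E(C) = \Hom_{\mathscr{A}}(k_C, E)$. This $F$ is exact, being the composite $\mathscr{C} \xrightarrow{k^{\mathrm{op}}} \mathscr{A}^{\mathrm{op}} \xrightarrow{\Hom_{\mathscr{A}}(-, E)} S\Mod$ of an exact functor with $\Hom_{\mathscr{A}}(-, E)$, which is exact because $E$ is injective; and $F$ is faithful because $E$ is a cogenerator and $k$ is faithful.

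The step I expect to be the real obstacle is showing that $F$ is \emph{full}---equivalently, that $\Hom_{\mathscr{A}}(-, E)$ is fully faithful on the full subcategory of representables $\{k_C : C \in \mathscr{C}\}$, i.e.\ that $\Hom_{\mathscr{C}}(C, C') \to \Hom_S(E(C), E(C'))$ is surjective for all $C, C'$. This is not automatic for an arbitrary injective cogenerator; it must be arranged by a judicious choice of $E$, exploiting that each $k_C$ is a finitely presented object of $\mathscr{A}$ (the evaluation $\Hom_{\mathscr{A}}(k_C, -)$ commutes with filtered colimits), so that an $S$-linear map $E(C) \to E(C')$ is forced to come from a morphism $k_{C'} \to k_C$ in $\mathscr{A}$. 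Pinning this down is the genuinely non-formal part of the argument; without it one obtains only a faithful exact embedding rather than a full one. (An alternative route: use a \emph{generator} $U$ of $\mathscr{A}$ and the Gabriel--Popescu theorem to present $\mathscr{A}$ as a localisation of the module category over $\End_{\mathscr{A}}(U)$; then $\Hom_{\mathscr{A}}(U, -)$ is automatically full and faithful but only left exact, so the difficulty reappears as the task of upgrading left-exactness to exactness on the image of $\mathscr{C}$---again handled via the finiteness of the $k_C$.) Throughout, the $k_0$-linear bookkeeping is routine, since every category, functor and $\Hom$-space in sight is $k_0$-linear.
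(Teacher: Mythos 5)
The paper does not actually prove this Fact---it is quoted as the classical Freyd--Mitchell embedding theorem---so the only question is whether your sketch amounts to a proof, and at present it does not: the step you yourself flag as the obstacle, fullness of $F$, is exactly the content that separates the \emph{full} embedding theorem from the much easier existence of a faithful exact embedding, and it is left unproved. What you do carry out is sound: the reduction to an essentially small subcategory (a necessary reading of the statement, since the theorem needs smallness), the fully faithful exact contravariant embedding $k\colon\mathscr{C}^{\mathrm{op}}\to\mathscr{A}=\mathrm{Lex}_{k_0}(\mathscr{C},\Vect_{k_0})$ with the correct verification that $k(i)$ is epic, the observation that $\mathscr{A}$ is a $k_0$-linear Grothendieck category with an injective cogenerator $E$, and exactness and faithfulness of $C\mapsto\Hom_{\mathscr{A}}(k_C,E)=E(C)$ as a module over $S=\End_{\mathscr{A}}(E)$. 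But the mechanism you gesture at for fullness---compactness of the $k_C$ in $\mathscr{A}$---is not how the standard argument runs, and it is not clear it can be made to work: an arbitrary $S$-linear map $E(C)\to E(C')$ presents no filtered colimit for compactness to act on, so nothing forces it to be induced by a morphism $k_{C'}\to k_C$.

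The missing idea is the classical enlargement step together with a lemma about projective generators. Since the image of $\mathscr{C}$ is only a \emph{set} of objects and $E$ is a cogenerator, replace $E$ by a product $\bar E=E^{I}$ of sufficiently many copies (still an injective cogenerator, as $\mathscr{A}$ is complete) so that every $k_C$ admits a monomorphism $k_C\hookrightarrow\bar E$; equivalently, in $\mathscr{A}^{\mathrm{op}}$ the object $\bar P=\bar E$ is a projective generator admitting an epimorphism onto each object in the image of $\mathscr{C}$. Now use the lemma: if $P$ is a projective generator of a cocomplete abelian category, $T=\End(P)$, and $A$ receives an epimorphism $\pi\colon P\twoheadrightarrow A$, then every $T$-module map $\phi\colon\Hom(P,A)\to\Hom(P,B)$ is induced by a unique $f\colon A\to B$ (set $\beta=\phi(\pi)$; $T$-linearity gives $\beta g=\phi(\pi g)=0$ for every $g\colon P\to P$ with $\pi g=0$, and since $P$ generates, these images cover $\ker\pi$, so $\beta$ factors through $\pi$; projectivity of $P$ then gives $h(f)=\phi$ on all of $\Hom(P,A)$). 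Applied with $P=\bar P$ and $A,B$ among the $k_C$, this yields fullness (and faithfulness) of the corrected functor $F(C)=\Hom_{\mathscr{A}}(k_C,\bar E)$ over $S=\End_{\mathscr{A}}(\bar E)$, which is still exact and faithful for the reasons you gave, and $S$ is a $k_0$-algebra since all Hom spaces in sight are $k_0$-vector spaces. Without this enlargement of the cogenerator and the quotient-of-a-projective-generator lemma, your construction only delivers a faithful exact embedding, which is strictly weaker than the Fact as stated.
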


Modularity is now seen to be a consequence of the so called \emph{diamond} isomorphism theorem: if $N_1,N_2$ are two sub-modules of a module $M,$ then $(N_1+N_2)/N_1\cong N_2/(N_1\cap N_2).$ 
Therefore it makes sense to refer to the \emph{length} $l(A)$ of an object $A$ of a $k_0$-linear abelian category $\Ca,$ defined as the length of the sub-object lattice $\Sub_\Ca(A).$\footnote{In general, the length of a lower bounded modular lattice $M$, when seen as a poset, is the maximal $n\in\N$ such that there is a chain of elements $x_0=\bot<x_1<\.<x_{n}$ in $M,$ or $\infty$ if there is no such maximum. An element $a$ of $M$ has \emph{finite length} if the lattice $[\bot,a]=\{b\in M:\bot\leq b\leq a\}$ has finite length.}
We say that $A$ has \emph{finite length} if $\Sub_\Ca(A)$ has finite length, and call $A$ \emph{semi-simple} if $A$ is a \emph{finite} direct sum of \emph{simple} objects, i.e.~of objects of length $1.$ 

\begin{defin}
     Let $\Ca$ be a $k_0$-linear abelian category and let $A$ be an object thereof. We define the \emph{directory of $A$} as the collection of modular lattices $$\Dir_{\Ca}(A)=(\Sub_\Ca(A),\Sub_\Ca(A^{2}),\Sub_\Ca(A^{3}),\.)$$ together with the lattice structure on each $\Sub_\Ca(A^{n}),$ the operation $$\oplus:\Sub_\Ca(A^{n})\times \Sub_\Ca(A^{m})\to\Sub_\Ca(A^{n+m})$$ given by $[X\hookrightarrow A^{n}]\oplus[Y\hookrightarrow A^{m}]:=[X\oplus Y\hookrightarrow A^{n+m}],$ and the action of $\Gl_n(k_0)$ on each $\Sub_{\Ca}(A^{n}).$
\end{defin}

If $\mathscr{\Ca}$ is the category $K$Vect of vector spaces over $K,$ or $S$Mod of modules over a ring $S,$ we write $\Dir_K(A)$ and $\Dir_S(A)$ respectively. We say that $\Dir_\Ca(A)$ is \emph{semi-simple} if $A$ is semi-simple. 

\begin{fact}[Artin-Wedderburn, cf.~{\cite[Theorem 2.7]{dp3}}]
Let $A$ be an object of an abelian category $\Ca.$ The following statements are equivalent:
\begin{enumerate}[wide]
    \item $\Dir_\Ca(A)$ is semi-simple.
    \item $\Dir_\Ca(A)\cong\Dir_S(M)$ for some semi-simple ring $S$ and some finitely generated $S$-module $M.$
    \item $\Dir_\Ca(A)\cong\Dir_S(S)$ for some semi-simple ring $S.$
\end{enumerate}
\end{fact}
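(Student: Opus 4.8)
The plan is to establish the cycle of implications $(1)\Rightarrow(3)\Rightarrow(2)\Rightarrow(1)$, the middle one being immediate (take $M=S$). For $(1)\Rightarrow(3)$, assume $A$ is a finite direct sum of simple objects and write $A\cong\bigoplus_{i=1}^{r}S_i^{n_i}$ with the $S_i$ pairwise non-isomorphic simple objects. By Schur's lemma $\Hom_\Ca(S_i,S_j)=0$ for $i\neq j$ and each $D_i:=\End_\Ca(S_i)$ is a division $k_0$-algebra, so the ring $S:=\End_\Ca(A)\cong\prod_{i=1}^{r}M_{n_i}(D_i)$ is semisimple. I would then consider the full subcategory $\Ca_A\subseteq\Ca$ of semisimple objects whose simple constituents all lie among the $S_i$ --- equivalently, the direct summands of finite powers $A^m$ --- which is abelian, closed under subobjects, and has $A$ as a projective generator. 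Consequently the additive, $k_0$-linear functor $F:=\Hom_\Ca(A,-)$ restricts to a Morita equivalence from $\Ca_A$ onto the category of finitely generated $S$-modules, carrying $A^m$ to the free module $S^m$; and since $\Ca_A$ is subobject-closed, $\Sub_\Ca(A^m)=\Sub_{\Ca_A}(A^m)$, so $F$ induces lattice isomorphisms $\Sub_\Ca(A^m)\cong\Sub_S(S^m)$ for every $m$. These isomorphisms respect the operations $\oplus$ (as $F$ is additive) and the $\Gl_m(k_0)$-actions (as $F$, being $k_0$-linear, commutes with $k_0$-linear combinations of the coordinate projections), whence $\Dir_\Ca(A)\cong\Dir_S(S)$.

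For $(2)\Rightarrow(1)$, suppose $\Dir_\Ca(A)\cong\Dir_S(M)$ with $S$ semisimple and $M$ finitely generated. The first ($n=1$) component of this isomorphism is a lattice isomorphism $\Sub_\Ca(A)\cong\Sub_S(M)$. Since $S$ is semisimple and $M$ is finitely generated, $M$ is a semisimple module of finite length, so $\Sub_S(M)$, and hence $\Sub_\Ca(A)$, is a complemented modular lattice of finite length. I would then finish with the classical fact that an object of an abelian category whose subobject lattice is complemented modular of finite length is a finite direct sum of simple objects: finiteness of length produces a simple subobject $S_1\leq A$, complementedness produces $A_1$ with $A\cong S_1\oplus A_1$, and $\Sub_\Ca(A_1)\cong[0,A_1]$ is again complemented modular of strictly smaller length, so an induction on the length closes the argument. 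Hence $A$ is semi-simple, i.e.\ $\Dir_\Ca(A)$ is semi-simple.

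The delicate point is the bookkeeping in $(1)\Rightarrow(3)$: one must verify that the Morita equivalence transports the \emph{entire} directory --- all the lattices $\Sub_\Ca(A^m)$ together with the block-sum maps and the $\Gl_m(k_0)$-actions --- rather than merely the isomorphism type of the single lattice $\Sub_\Ca(A)$, and this is precisely where $k_0$-linearity of $\Ca$ is used. Alternatively, one may first apply Mitchell's embedding theorem (Fact~\ref{mitchel}) to assume that $\Ca$ is a full, subobject-closed subcategory of $S_0\Mod$, reducing everything to the case of semisimple modules; the ensuing module-theoretic argument, however, still comes down to the same Morita identification.
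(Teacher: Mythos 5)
The paper does not actually prove this statement: it is imported as a Fact with a citation to \cite[Theorem 2.7]{dp3}, so there is no internal argument to compare yours against; what you have written is a self-contained proof along the standard Artin--Wedderburn/Morita lines, and it is correct. Your cycle $(1)\Rightarrow(3)\Rightarrow(2)\Rightarrow(1)$ works: in $(1)\Rightarrow(3)$ the key points --- that $\End_\Ca(A)\cong\prod_i M_{n_i}(D_i)$ is semisimple by Schur, that the subcategory of objects with constituents among the $S_i$ is subobject-closed so that $\Sub_\Ca(A^m)$ is computed inside it, and that $\Hom_\Ca(A,-)$ transports not just the lattices but also $\oplus$ and the $\Gl_m(k_0)$-actions by additivity and $k_0$-linearity --- are exactly the checks needed for a directory isomorphism rather than a mere lattice isomorphism, and your $(2)\Rightarrow(1)$ correctly reduces to the purely lattice-theoretic fact that a finite-length complemented modular subobject lattice forces semisimplicity, using only the $n=1$ layer of the directory isomorphism (with the implicit standard fact that complemented modular lattices are relatively complemented, so the interval $[0,A_1]$ is again complemented). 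Two small bookkeeping remarks: $\Hom_\Ca(A,-)$ is naturally an equivalence onto finitely generated modules over $\End_\Ca(A)^{\mathrm{op}}$ (equivalently, right $\End_\Ca(A)$-modules), so one should either pass to the opposite ring or note that the opposite of a semisimple ring is again semisimple, which leaves statement $(3)$ untouched; and the statement tacitly assumes $\Ca$ is $k_0$-linear (the directory needs the $\Gl_n(k_0)$-action), which you use and which also guarantees that $S=\End_\Ca(A)$ is a $k_0$-algebra, so that $\Dir_S(S)$ carries the required action.
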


\begin{defin}
    Let $\Ca,\Da$ be two $k_0$-linear abelian categories and let $A,B$ be two objects of $\Ca$ and $\Da$ respectively. A system of functions $\s:\Dir_\Ca(A)\to \Dir_\Da(B)$ given by $\s_n:\Sub_\Ca(A^{n})\to\Sub_\Da(B^{n})$ for each $n>0,$ is called a \emph{directory morphism} if:
    \begin{enumerate}[wide]
        \item For any $n>0,$ the map $\s_n$ is monotone: $X\leq Y$ implies that $\s_n(X)\leq\s_n(Y)$ for any pair of subobjects $X,Y\in\Sub_\Ca(A^{n}).$
        \item For any pair $n,m>0$ and any pair of subobjects $X\in\Sub_\Ca(A^{n})$ and $Y\in\Sub_\Ca(A^{m}),$ $$\s_{n+m}(X\oplus Y)=\s_n(X)\oplus\s_m(Y).$$
        \item For any $n>0,$ any $\mu\in\Gl_n(k_0)$ and any $X\in\Sub_\Ca(A^{n}),$ $$\s_n(\mu\cdot X)=\mu\cdot\s_n(X).$$
    \end{enumerate}
    We say that two directory morphisms $\s:\Dir_\Ca(A)\to\Dir_\Da(B),$ $\s':\Dir_\Ca(A)\to\Dir_{\Da'}(B')$ are \emph{equivalent} if there is a directory isomorphism $\Dir_\Da(B)\to\Dir_{\Da'}(B')$ making the diagram
    \begin{center}
        \begin{tikzcd}
\Dir_\Ca(A) \arrow[rrd, "\s'"'] \arrow[rr, "\s"] &  & \Dir_\Da(B) \arrow[d] \\
                                               &  & \Dir_{\Da'}(B')      
\end{tikzcd}
    \end{center}
    commute.
\end{defin}

\begin{defin}
    Let $A$ be a semi-simple object of an abelian category $\Ca$ and let $d>0$. A directory morphism $\s:\Dir_K(K)\to\Dir_\Ca(A)$ is called a \emph{$d$-inflator} if $l(\s_n(V))=d\cdot\dim_K(V)$ for any $V\leq K^{n},$ $n>0.$ We say that $\s$ is an \emph{inflator} if it is a $d$-inflator for some $d>0.$
\end{defin}

If $\a\in K$, then $\T_\a:=K\cdot(1,\a)$ denotes the line of slope $\a$ in $K^{2}.$ Analogously, if $\f\in\End_S(M)$ for some $S$-module $M,$ then $\T_\f$ will denote the graph of the endomorphism $\f,$ i.e.~$\T_\f:=\{(x,\f(x)):x\in M\}.$ If $\s:\Dir_K(K)\to\Dir_S(M)$ is an inflator and $\f\in\End_S(M),$ we say that $a\in K$ \emph{specializes to $\f$ with respect to $\s$} if $\s_2(\T_a)=\T_\f.$ 

\begin{fact}[cf.~{\cite[Proposition 5.7]{dp3}}]
\label{fundring}
    Let $\s:\Dir_K(K)\to\Dir_S(M)$ be a $d$-inflator. Then the set $R_\s=\{a\in K:\s_2(\T_a)=\T_\f\text{ for some }\f\in\End_S(M)\}$ is a $k_0$-subalgebra of $K.$ Define $\widehat{\res}:R_\s\to\End_S(M)$ by $\widehat{\res}(a)=\f$ if and only if $\s_2(\T_a)=\T_\f.$ Then $\widehat{\res}$ is a $k_0$-algebra morphism whose kernel $I_\s=\{a\in K:\s_2(\T_a)=M\oplus 0\}$ satisfies $1+I\subseteq R_\s^{\times}$ and is contained in the Jacobson radical of $R_\s.$
\end{fact}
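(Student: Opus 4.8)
The plan is to extract everything from the two structural axioms of a directory morphism --- compatibility with $\oplus$ and equivariance for the $\Gl_n(k_0)$-actions --- together with the defining length identity $l(\s_n(V))=d\cdot\dim_K V$. I would first dispatch the base cases: compatibility with $\oplus$ forces $\s_1(0)=0$ (since $\s_1(0)\oplus\s_1(0)=\s_2(0\oplus 0)$ has length $0$), and, by the standard normalization of an inflator, $\s_1(K)=M$, so $l(M)=d$ and $M\ne 0$; in particular $\s_2(\T_0)=\s_1(K)\oplus\s_1(0)=M\oplus 0$, i.e.\ $0\in R_\s$ and $\widehat{\res}(0)=0$. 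Next, for $\l\in k_0^{\times}$ the unipotent matrix $\bigl(\begin{smallmatrix} 1 & 0 \\ \l & 1 \end{smallmatrix}\bigr)\in\Gl_2(k_0)$ carries the subobject $\T_0=K\oplus 0$ to $\T_\l$ and the graph of the zero endomorphism to the graph of $\l\cdot\id_M$; equivariance of $\s$ then gives $k_0\subseteq R_\s$ with $\widehat{\res}$ restricting to the structure map $k_0\to\End_S(M)$. Once $\widehat{\res}$ is shown to be multiplicative, this makes $R_\s$ a $k_0$-subalgebra and $\widehat{\res}$ a $k_0$-algebra morphism ($\widehat{\res}(\l a)=\widehat{\res}(\l)\widehat{\res}(a)=\l\,\widehat{\res}(a)$).

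The heart of the argument is closure under addition and multiplication, which I would prove by two parallel ``squeezing'' arguments in dimension $3$; write $\f=\widehat{\res}(a)$, $\psi=\widehat{\res}(b)$ for $a,b\in R_\s$. For addition, consider the line $W=K\cdot(1,a,b)\le K^{3}$. It lies below $\T_a\oplus K=\{x_2=ax_1\}$ and also below $\{x_3=bx_1\}$, which is the $\Gl_3(k_0)$-translate of $\T_b\oplus K$ obtained by swapping the last two coordinates. Applying $\s_3$ and using monotonicity, $\s_3(W)$ lies below the meet $(\T_\f\oplus M)\wedge\{y_3=\psi y_1\}=\{(w,\f w,\psi w):w\in M\}$, which has length $d$; since $l(\s_3(W))=d$, equality holds. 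Now the unipotent matrix $(y_1,y_2,y_3)\mapsto(y_1,y_2+y_3,y_3)$ carries $W$ to $K\cdot(1,a+b,b)\le\T_{a+b}\oplus K$, and transporting $\s_3(W)$ by it yields $\{(w,(\f+\psi)w):w\in M\}\le\s_2(\T_{a+b})$; a length comparison again upgrades this to equality, so $a+b\in R_\s$ and $\widehat{\res}(a+b)=\f+\psi$. For multiplication I would run the identical scheme with $W=K\cdot(1,a,ab)$, now using $W\le\T_a\oplus K$ and $W\le K\oplus\T_b=\{x_3=bx_2\}$ to squeeze $\s_3(W)$ onto $\{(w,\f w,\psi\f w):w\in M\}$, then reading off $\T_{\psi\f}\le\s_2(\T_{ab})$ and hence $\s_2(\T_{ab})=\T_{\psi\f}$; applying it once more with $a$ and $b$ exchanged gives $\s_2(\T_{ab})=\T_{\f\psi}$ as well, so $\f\psi=\psi\f$ and $\widehat{\res}(ab)=\f\psi$.

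It then remains to analyse $I_\s=\ker\widehat{\res}=\{a\in K:\s_2(\T_a)=M\oplus 0\}$. The key observation is that the swap matrix $\bigl(\begin{smallmatrix} 0 & 1 \\ 1 & 0 \end{smallmatrix}\bigr)\in\Gl_2(k_0)$ sends $\T_u$ to $\T_{u^{-1}}$ for $u\in K^{\times}$ and the graph of an invertible endomorphism $\f$ to the graph of $\f^{-1}$; consequently, if $u\in R_\s$ and $\widehat{\res}(u)$ is invertible in $\End_S(M)$, then $u^{-1}\in R_\s$ with $\widehat{\res}(u^{-1})=\widehat{\res}(u)^{-1}$, i.e.\ $u\in R_\s^{\times}$. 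Applying this with $u=1+a$ for $a\in I_\s$ --- note $1+a\ne 0$, since otherwise $\widehat{\res}(a)=-\id_M\ne 0$, and $\widehat{\res}(1+a)=\id_M$ is invertible --- gives $1+a\in R_\s^{\times}$, so $1+I_\s\subseteq R_\s^{\times}$. Finally $I_\s$ is an ideal of the commutative ring $R_\s$, whence $1+R_\s I_\s\subseteq 1+I_\s\subseteq R_\s^{\times}$, and the standard description $\operatorname{Jac}(R_\s)=\{x\in R_\s:1+R_\s x\subseteq R_\s^{\times}\}$ yields $I_\s\subseteq\operatorname{Jac}(R_\s)$.

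The step I expect to be the main obstacle is the dimension-$3$ bookkeeping in the second paragraph: one must check carefully that each auxiliary subspace that occurs ($\{x_3=bx_1\}$, $\{x_3=bx_2\}$, $\T_{a+b}\oplus K$, and so on) is genuinely a $\Gl_3(k_0)$-image of a $\oplus$-decomposable subspace, so that $\s_3$ can actually be evaluated on it from the axioms, and --- the delicate point --- that the meets of the resulting upper bounds have length \emph{exactly} $d$. It is this sharp length count, played against the inflator identity $l(\s_3(W))=d$, that forces $\s_3(W)$ to equal its expected value; with anything weaker one would obtain only inclusions, not the graph equalities one needs.
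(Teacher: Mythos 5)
This Fact is not proved in the paper at all: it is imported as a black box from Johnson (\cite[Proposition 5.7]{dp3}), so there is no internal proof to compare yours against. Judged on its own, your argument is correct and complete: the normalizations $\s_1(0)=0$, $\s_2(\T_0)=M\oplus 0$, the $\Gl_2(k_0)$-equivariance giving $k_0\subseteq R_\s$ with $\widehat{\res}(\lambda)=\lambda\operatorname{Id}_M$, the length-squeezing of $\s_3$ on the lines $K\cdot(1,a,b)$ and $K\cdot(1,a,ab)$ (the meets you need are exactly $\{(w,\f w,\psi w):w\in M\}$ and $\{(w,\f w,\psi\f w):w\in M\}$, each of length $d$, so equality with $\s_3(W)$ does follow, and transporting by the unipotent or permutation matrices and projecting off the last coordinate is legitimate because subobjects in $\Dir_S(M)$ are literal submodules), the swap-matrix trick for inverses, and the characterization $\operatorname{Jac}(R_\s)=\{x:1+R_\s x\subseteq R_\s^{\times}\}$ for the commutative ring $R_\s$ all go through as you describe; your treatment of the corner case $1+a=0$ is also fine since $M\neq 0$. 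The one tacit ingredient is the normalization $\s_1(K)=M$, equivalently $l(M)=d$: this is not part of the bare definition of a $d$-inflator recalled in Section 2, but it is the standing convention of the paper (stated in the paragraph right after the Fact) and is in any case forced if $R_\s$ is to contain $1$, since $\T_\f\cong M$ while $l(\s_2(\T_1))=d$; it would be worth flagging explicitly. With that caveat recorded, your proof is a valid self-contained substitute for the external citation, and your bookkeeping of $\widehat{\res}(ab)=\psi\f=\f\psi$ via symmetry in $a,b$ correctly handles the possible non-commutativity of $\End_S(M)$.
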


\begin{defin}
    We call $R_\s$ the \emph{fundamental ring} and $I_\s$ the \emph{fundamental ideal} of $\s.$ The map $\widehat{\res}:R\to\End_S(M)$ is called the \emph{generalized residue map} associated to $\s.$
\end{defin}

Let $\s:\Dir_K(K)\to\Dir_S(M)$ be a $d$-inflator, where $S$ is a $k_0$-algebra and $M$ is a semi-simple $S$-module of length $d.$ Denote by $R$ its fundamental ring and by $I$ its fundamental ideal, and let $L=K\cdot(a_1,\.,a_n)$ be a line in $K^{n}.$ Then the Kronecker product $K^{m}\otimes L$ is a subobject of $K^{nm}.$ If $V\leq K^{m},$ then $V\otimes L\leq K^{m}\otimes L\leq K^{nm}.$ We may define a new map $\s':\Dir_K(K)\to \Dir_S(\s_n(L))$ by putting $$\s_m'(V)=\s_{nm}(V\otimes L).$$ 

Note that for any $m>0,$ $K^{m}\otimes L=L^{m}$ as subspaces of $K^{nm},$ so that $\s_m'(V)=\s_{nm}(V\otimes L)\leq\s_{nm}(K^{m}\otimes L)=\s_{nm}(L^{m})=(\s_{n}(L))^{m},$ meaning that the target directory is the adequate one. Since both $-\otimes L$ and $\s$ preserve $\leq,$ then $\s'$ preserves $\leq$ as well. Also, $-\otimes L$ distributes over $\oplus,$ so as $\s$ distributes over $\oplus$ as well, then so does $\s'$. Finally, if $\xi\in\Gl_m(k_0),$ then 
$$\s_m'(\xi\cdot V)=\s_{nm}((\xi\cdot V)\otimes L)=\s_{nm}((\xi\otimes \operatorname{Id}_n)\cdot(V\otimes L))=(\xi\otimes\operatorname{Id}_n)\cdot\s_{nm}(V\otimes L)=\xi\cdot\s_m'(V).$$
The last equality follows because the action of $\xi\otimes \operatorname{Id}_n$ in $M^{nm}$ is equal to the action of $\xi$ on $(M^{n})^{m},$ which can be checked by tracking the action of both matrices in a vector of $M^{nm}=(M^{n})^{m}$ respectively. 
To sum up, $\s'$ is a well-defined directory morphism. It is in fact a $d$-inflator. Indeed, if $V\leq K^{m},$ 
$$l(\s'_m(V))=l(\s_{nm}(V\otimes L))=d\cdot\dim_K(V\otimes L)=d\cdot\dim_K(V)\cdot\dim_K(L)=d\cdot\dim_K(V).$$

\begin{defin}
    The \emph{mutation along $L$} of $\s$ is the $d$-inflator $\s':\Dir_K(K)\to\Dir_S(\s_n(L))$ given by $\s_m'(V)=\s_{nm}(V\otimes L)$ for any $V\leq K^{m}.$
\end{defin}

 Finally, we define malleability. Though we will not use this property explicitly, it will be mentioned several times. We still record the definition for the sake of completeness. 
\begin{defin}
    An inflator $\s:\Dir_K(K)\to D_{\bullet}$ is called \emph{malleable} if, for every $n>0,$ every $V\subseteq W$ in $\Sub_K(K^{n})$ and every object $Y\in D_n$ such that $\s_n(V)\leq Y\leq\s_n(W),$ if $l(Y)=1+l(\s_n(V)),$ then there is some $X\in\Sub_{K}(K^{n})$ such that $V\subseteq X\subseteq W,$ $l(X)=1+l(V)$ and $\s_n(V)\leq Y\leq \s_n(X)\leq \s_n(W).$ 
\end{defin}

\subsection{Conventions}

We will relax Johnson's hypothesis of the existence of an \emph{infinite} auxiliary subfield $k_0$ such that all rings and fields mentioned in the document are $k_0$-algebras. Namely, we allow $k_0$ to be the algebraic part of $K$, i.e.~$\F_p^{alg}\cap K$ (for $\char(K)=p>0$), even when this is finite. Johnson calls an inflator $\s:\Dir_K(K)\to D_{\bullet}$ \emph{of weak multi-valuation type} if its fundamental ring $R_\s$ contains a non-zero ideal of a multi-valuation ring of $K.$ In \cite{dp4}, he studies malleable 2-inflators with no weak multi-valuation type mutations. In order to keep track of the number of maximal ideals of such multi-valuation rings, we introduce the following notation.  

\begin{defin}
\label{v2type}
    Let $\k\geq1$ be a countable cardinal. An inflator $\s:\Dir_K(K)\to D_{\bullet}$ is \emph{weakly of $V^{\k}$-type} (or of \emph{weak $V^{\k}$-type}) if $R_\s$ contains a non-zero ideal of a multi-valuation ring with at most $\k$-many maximal ideals.
\end{defin}

Note that being weakly of $V^{\aleph_0}$-type is equivalent to being weakly of multi-valuation type, because all multi-valuation rings have finitely many maximal ideals. With this notation in mind, we will work with a fix $k_0$-linear 2-inflator $\s:\Dir_K(K)\to D_{\bullet}$ satisfying the following hypotheses:
\begin{enumerate}
    \item $\s$ is malleable, and

    \item no mutation of $\s$ is of weak $V^{2}$-type.
\end{enumerate}
If $\s$ is an inflator satisfying these conditions, we say that $\s$ satisfies the \emph{Weak Assumptions}. Note that these assumptions are invariant under mutation, because a mutation of a malleable inflator remains malleable (cf.~\cite[Proposition 10.13]{dp3}), and a mutation of a mutation of $\s$ remains a mutation of $\s$ (cf.~\cite[Proposition 10.5]{dp3}). We will only use the second of the Weak Assumptions until Lemma \ref{QDENSE}. In fact, this is all that Johnson uses in \cite[Lemma 5.20]{dp4}: no \emph{valuative} ball should be contained in the fundamental ring of $\s.$ We make this clearer in Lemma \ref{QDENSE}. 
Recall that an element $\a\in K$ is called \emph{wild} (with respect to $\s$) if its \emph{tame locus} $$S_\a=\{\a\}\cup\left\{\dfrac{1}{\a-q}:q\in k_0\right\}$$ is disjoint from $R_\s.$ If this is not the case, we call $\a$ \emph{tame} (with respect to $\s$), in which case all but at most 2 elements of $S_\a$ are in $R_\s,$ cf.~\cite[Lemma 5.22]{dp3}.
We also recall that, whenever $k_0$ is infinite, then any inflator $\xi$ on $K$ is of multi-valuation type if and only if all elements of $K$ are tame with respect to $\xi.$ If we allow $k_0$ to be finite, we still have the following result.

\begin{lema}[Cf.~{\cite[Proposition 5.25]{dp3}}]
    Let $\s:\Dir_K(K)\to D_{\bullet}$ be a 2-inflator. If all elements of $K$ are tame with respect to $\s,$ then $R_\s$ is a multi-valuation ring with at most 2 maximal ideals. 
\end{lema}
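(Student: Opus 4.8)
The plan is as follows. Assume every element of $K$ is tame with respect to $\s$; we must show that $R:=R_\s$ is a multi-valuation ring with at most two maximal ideals. I would do this in two steps: first that $R$ is a Prüfer domain, and then that it has at most two maximal ideals. A Prüfer domain with at most two maximal ideals is the intersection of the (at most two) valuation rings obtained by localizing at its maximal ideals, hence a multi-valuation ring with at most two maximal ideals.

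For the first step I would follow the proof of \cite[Proposition 5.25]{dp3}: its backbone passes, through the generalized residue map $\widehat{\res}\colon R\to\End_S(M)$ and mutation of $\s$ along appropriate lines, to the case of $1$-inflators, and this passage uses only the algebra of inflators, not the cardinality of $k_0$. For a $1$-inflator $\s'$ whose fundamental ring $R'$ is such that every element of $K$ is tame, $R'$ is a valuation ring by an elementary argument: if $\a\in K^{\times}\setminus R'$, then all but at most one member of $S_\a=\{\a\}\cup\{(\a-q)^{-1}:q\in k_0\}$ lies in $R'$ (the $1$-inflator case of \cite[Lemma 5.22]{dp3}), and $\a$ itself is such an omitted member, so it is the only one; since $0\in k_0\subseteq R'$, this forces $\a^{-1}=(\a-0)^{-1}\in R'$. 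Carrying this conclusion back through the reduction — using that the property ``every element of $K$ is tame'' is inherited under mutation and localization — shows that $R_\m$ is a valuation ring for every maximal ideal $\m$ of $R$, i.e.\ that $R$ is a Prüfer domain.

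For the second step, suppose towards a contradiction that $R$ has three distinct maximal ideals $\m_1,\m_2,\m_3$, and let $v_1,v_2,v_3$ be the valuations of the valuation rings $R_{\m_1},R_{\m_2},R_{\m_3}$. By the approximation theorem for valuations, choose $\a\in K$ with $v_1(\a)<0$, $v_2(\a)>0$ and $v_3(\a-1)>0$. Then $\a\notin R_{\m_1}$, $\a^{-1}\notin R_{\m_2}$ and $(\a-1)^{-1}\notin R_{\m_3}$, so, since $0,1\in k_0\subseteq R$, the three elements $\a$, $\a^{-1}=(\a-0)^{-1}$ and $(\a-1)^{-1}$ of $S_\a$ all lie outside $R$. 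They are pairwise distinct: $\a=\a^{-1}$ would give $\a^{2}=1$ and $\a=(\a-1)^{-1}$ would give $\a^{2}-\a=1$, each contradicting $v_1(\a)<0$, while $\a^{-1}=(\a-1)^{-1}$ is impossible. So three members of $S_\a$ lie outside $R$, contradicting the tameness of $\a$, which (as $\s$ is a $2$-inflator) permits at most two. Hence $R$ has at most two maximal ideals, and by the first step $R$ equals the intersection of the corresponding valuation rings, so $R$ is a multi-valuation ring with at most two maximal ideals.

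The main obstacle is the reduction in the first step: one has to import the structure theory of fundamental rings and the mutation machinery from \cite{dp3}, verify that the hypothesis ``every element of $K$ is tame'' really is inherited at each stage of the reduction to $1$-inflators, and check that no stage covertly uses the infinitude of $k_0$. By contrast, the $1$-inflator base case and the counting argument of the second step are elementary and, as the computations above show, indifferent to whether $k_0$ is finite — which is exactly why Johnson's standing assumption that $k_0$ be infinite can be dropped here.
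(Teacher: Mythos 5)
Your step 2 is fine in spirit (and close to what the paper actually does), but step 1 --- that $R_\s$ is a Pr\"ufer domain, so that its localizations at maximal ideals are valuation rings --- is exactly the non-trivial content of the lemma, and you do not prove it. The proposed route, ``follow the proof of \cite[Proposition 5.25]{dp3} by passing through mutations to the case of $1$-inflators,'' is not available: a mutation of a $2$-inflator is again a $2$-inflator, and a localization $ (R_\s)_{\m}$ is not in any evident way the fundamental ring of a $1$-inflator, nor is the hypothesis ``every element of $K$ is tame'' transferred to such a hypothetical object. In fact the argument the paper uses (and, modulo the cardinality of $k_0$, the argument behind \cite[Proposition 5.25]{dp3}) is not a reduction to $1$-inflators at all: it feeds the tameness dichotomy directly into the ring-theoretic criterion \cite[Lemma 5.24]{dp3}, which says that a $k_0$-subalgebra $R\subseteq K$ meeting $\{\a,1/\a,1/(\a-1)\}$ for every $\a\in K$ is a multi-valuation ring with at most $2$ maximal ideals. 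Since tameness of $\a$ for a $2$-inflator forces all but at most $2$ elements of $S_\a$ to lie in $R_\s$, and $0,1\in k_0$, one of these three elements is always in $R_\s$, and the lemma follows in one stroke --- the multi-valuation structure and the bound on the number of maximal ideals come together, rather than being split as in your plan. Without Lemma 5.24 or a genuine substitute for Pr\"uferness, your counting argument has nothing to count localizations of, so the proposal as written does not prove the statement.

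Two smaller points. Your appeal to ``the approximation theorem for valuations'' to produce $\a$ with $v_1(\a)<0$, $v_2(\a)>0$, $v_3(\a-1)>0$ is imprecise: the classical approximation theorem needs pairwise \emph{independent} valuations, whereas localizations of a Pr\"ufer domain at distinct maximal ideals are only pairwise incomparable; the needed element does exist (combine the Chinese Remainder Theorem in $R$ with the fact that a valuation ring containing a finite intersection of valuation rings must contain one of them), but this has to be argued. On the positive side, your observation that the $1$-inflator case and the counting step use nothing beyond $\{0,1\}\subseteq k_0$ is correct, and it is precisely this observation --- applied to Lemma 5.24 with the two shifts $q=0,1$ --- that lets the paper drop Johnson's standing assumption that $k_0$ be infinite.
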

\begin{proof}
    Since $R_\s$ is a $k_0$-algebra, by \cite[Lemma 5.24]{dp3}, it is enough to show that $$\left(\{\a\}\cup\left\{\dfrac{1}{\a},\dfrac{1}{\a-1}\right\}\right)\cap R_\s\neq\emptyset$$ for any $\a\in K.$ But all but at most 2 elements of $S_\a$ are in $R_\s,$ so at least one of these three listed elements is indeed in $R_\s.$ Thus $R_\s$ is a multi-valuation ring having at most 2 maximal ideals.
\end{proof}

\begin{remark}
\label{wildex}
    Any mutation $\s'$ of $\s$ admits wild elements. Otherwise, $R_{\s'}$ would be a multi-valuation ring having at most 2 maximal ideals, i.e.~$\s'$ would be of $V^{2}$-type.   
\end{remark}

 We will also use the following fact, which is puts in evidence the independence on the cardinality of $k_0$. We include a different proof for the sake of completeness.

\begin{fact}[Cf.~{\cite[Lemma 10.11]{dp3}}]
\label{tamingwild}
    Let $\s:\Dir_K(K)\to\Dir_S(M)$ be a 2-inflator, and let $\a\in K$ be wild with respect to $\s.$ Let $\s'$ be the mutation of $\s$ along $K\cdot(1,\a).$ Then $\a$ is tame with respect to $\s'.$ 
\end{fact}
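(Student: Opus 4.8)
The plan is to compute $\s'_2(\T_\a)=\s_4(\T_\a\otimes\T_\a)$ explicitly and to locate it inside $(M')^2$, after using wildness to rigidify $\s_2(\T_\a)$. I will use repeatedly that, for a $2$-inflator $\xi\colon\Dir_K(K)\to\Dir_S(M)$ and $a\in K$, one has $a\in R_\xi$ if and only if $\xi_2(\T_a)\cap(0\oplus M)=0$; one direction is immediate from the definition of the fundamental ring, and for the converse, $\xi_2(\T_a)$ has length $2=l(M)$, so trivial intersection with the second coordinate copy of $M$ forces the first projection $\xi_2(\T_a)\to M$ to be an isomorphism, hence $\xi_2(\T_a)$ to be a graph.

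First I would show that $\s_2(\T_\a)=N\oplus N$ for a simple submodule $N\le M$, viewed inside $M\oplus M$. Since $\T_{1/\a}=w\cdot\T_\a$ and $\T_{1/(\a-q)}=g_q\cdot\T_\a$ for $w,g_q\in\Gl_2(k_0)$ (the coordinate swap, and $\left(\begin{smallmatrix}-q&1\\1&0\end{smallmatrix}\right)$) and $\s_2$ is $\Gl_2(k_0)$-equivariant: if one of $\s_2(\T_\a)\cap(M\oplus 0)$ and $\s_2(\T_\a)\cap(0\oplus M)$ were zero, then $1/\a$ or $\a$ would lie in $R_\s$, against wildness; so both are nonzero, and a length count yields $\s_2(\T_\a)=A_0\oplus B_0$ with $A_0\le M\oplus 0$, $B_0\le 0\oplus M$ simple. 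If $A_0\ne B_0$ as submodules of $M$, then $A_0\cap B_0=0$, and $g_1\cdot(A_0\oplus B_0)=\{(b-a,a):a\in A_0,\ b\in B_0\}$ meets $0\oplus M$ only in $0$, so $1/(\a-1)\in R_\s$, again against wildness. Hence $A_0=B_0=:N$.

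Next I would unwind the mutation: $M'=\s'_1(K)=\s_2(\T_\a)=N\oplus N$, and $(M')^2=\s'_2(K^2)=\s_4(\T_\a\oplus\T_\a)=\s_2(\T_\a)\oplus\s_2(\T_\a)$, which I view inside $M^4$ as $N_{(1)}\oplus N_{(2)}\oplus N_{(3)}\oplus N_{(4)}$, the two coordinate copies of $M'$ being $M'_{(1)}$ (coordinates $1,2$) and $M'_{(2)}$ (coordinates $3,4$). Put $Z=\s'_2(\T_\a)=\s_4(\T_\a\otimes\T_\a)$, where $\T_\a\otimes\T_\a=K\cdot(1,\a,\a,\a^2)\le\T_\a\oplus\T_\a$; this is a length-$2$ submodule of $(M')^2$, hence of $N^4$. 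The crucial point is that the vectors in $\T_\a\otimes\T_\a$ have equal second and third coordinates, so the transvection $t\in\Gl_4(k_0)$ subtracting coordinate $2$ from coordinate $3$ carries $\T_\a\otimes\T_\a$ into the coordinate hyperplane $K\oplus K\oplus 0\oplus K$; since $\s_4$ is monotone and additive, $\s_4(K\oplus K\oplus 0\oplus K)=M\oplus M\oplus 0\oplus M$, and applying $t^{-1}$ gives $Z\le\{(m_1,m_2,m_2,m_4):m_i\in M\}$. Combined with $Z\le N^4$, every $z=(z_1,z_2,z_3,z_4)\in Z$ has $z_2=z_3\in N$.

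Finally, a short case analysis would finish the proof. If $Z\le N_{(1)}\oplus 0\oplus 0\oplus N_{(4)}$, then $Z$ equals it by the length count, and $Z$ meets $g_1^{-1}(M'_{(2)})=\{(m,m):m\in M'\}$ only in $0$, so $1/(\a-1)\in R_{\s'}$. Otherwise there is $z\in Z$ with $z_2=z_3\ne 0$; if $Z$ met both $M'_{(1)}$ and $M'_{(2)}$ nontrivially, then (using $z_2=z_3$ and $Z\le N^4$) it would contain the simple submodules $N_{(1)}\oplus 0\oplus 0\oplus 0$ and $0\oplus 0\oplus 0\oplus N_{(4)}$, forcing $Z=N_{(1)}\oplus 0\oplus 0\oplus N_{(4)}$ and contradicting the choice of $z$; hence $Z\cap M'_{(2)}=0$, so $\a\in R_{\s'}$, or $Z\cap M'_{(1)}=0$, so (via $w$) $1/\a\in R_{\s'}$. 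In every case $S_\a\cap R_{\s'}\ne\emptyset$, i.e.\ $\a$ is tame with respect to $\s'$. I expect the main obstacle to be the third paragraph: the inflator axioms give control only through the direct-sum operation and the $\Gl(k_0)$-action, so the real work is extracting a computable invariant of $\s_4(\T_\a\otimes\T_\a)$ — namely that its middle coordinates lie on a diagonal — after which everything reduces to length bookkeeping, and wildness of $\a$ enters only in the first step.
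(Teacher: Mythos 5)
Your proof is correct, and it takes a recognizably different route from the one in the paper, even though the computational core is shared. Both arguments hinge on the same identity $\s'_2(\T_\a)=\s_4\big(K\cdot(1,\a,\a,\a^2)\big)$ and on $\Gl_4(k_0)$-equivariance to force the two middle coordinates of every element of $\s'_2(\T_\a)$ to agree (you do this with a transvection plus monotonicity/additivity; the paper writes the Kronecker line as $\mu\cdot(N\oplus 0)$ for an explicit $\mu\in\Gl_4(k_0)$ — the two devices are interchangeable). The difference is in how wildness is used and how the conclusion is extracted: the paper argues by contradiction, assuming $\a$ wild with respect to $\s'$, imports the two witnesses $(\e_1,0,0)$ and $(\e_2,\e_2,\e_2)$ from \cite[Lemma 10.10]{dp3}, and derives a length-$3$ submodule inside a length-$2$ module; you instead use the actual hypothesis (wildness with respect to $\s$) to prove the structural fact $\s_2(\T_\a)=N\oplus N$ with $N$ simple, and then close with a direct case analysis exhibiting which of $\a$, $1/\a$, $1/(\a-1)$ lies in $R_{\s'}$. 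What your version buys is self-containedness (no appeal to the external Lemma 10.10, whose content is essentially your first paragraph) and a constructive identification of the tame-locus element; what the paper's version buys is brevity given the cited lemma. Two small points worth making explicit if you write this up: your criterion ``$a\in R_\xi$ iff $\xi_2(\T_a)\cap(0\oplus M)=0$'' needs the standing convention that the target module has length exactly $2$ (equivalently $\s_1(K)=M$), which holds here for both $\s$ and $\s'$ and is exactly the equivalence the paper itself uses; and in the first step you should note $\a\notin k_0$ (immediate since $k_0\subseteq R_\s$ and $\a$ is wild) so that $\T_{1/(\a-q)}=g_q\cdot\T_\a$ makes sense.
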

\begin{proof}
    Suppose $\a$ is wild with respect to $\s',$ so that $S_\a\cap R_{\s'}=\emptyset.$ By \cite[Lemma 10.10]{dp3}, there are non-zero elements $\e_1,\e_2$ of $M$ such that $(\e_1,0,0),(\e_2,\e_2,\e_2)\in\s_3(\{(x,\a x,\a^{2}x):x\in K\}),$ corresponding to the fact that $1/\a$ and $1/(\a-1)$ are not elements of $R_{\s'}.$ 
    In the same fashion, 
    \begin{align*}
        \a\not\in R_{\s'}&\Sii \s_2'(K\cdot(1,\a))\text{ is not the graph of any }\f\in\End_S(\s_2(K\cdot(1,\a)))\\
        &\Sii \s_2'(K\cdot(1,\a))\cap\Big(0^{2}\oplus\s_2(K\cdot(1,\a))\Big)\neq 0^{2}\oplus 0^{2},
    \end{align*} so there is a nontrivial vector $(x_1,x_2)\in\s_2(K\cdot(1,\a))\subseteq M^{2}$ satisfying that $(0,0,x_1,x_2)\in\s_2'(K\cdot(1,\a)).$
    By the definition of the mutation along $K\cdot(1,\a),$ we get that $\s_2'(K\cdot(1,\a))=\s_{4}\Big(K\cdot(1,\a)\otimes K\cdot(1,\a)\Big),$ and since the Kronecker product of $(1,\a)$ and $(1,\a)$ equals $$\begin{bmatrix}
        1 & \a\\
        \a & \a^{2}
    \end{bmatrix},$$
    we conclude that $K\cdot(1,\a)\otimes K\cdot(1,\a)$ is equal to $\{(x,\a x,\a x,\a^{2}x):x\in K\}.$ If $N=\{(x,\a x,\a^{2}x):x\in K\},$ we can recover $K\cdot(1,\a)\otimes K\cdot(1,\a)$ as $\mu\cdot(N\oplus0),$ where $\mu\in\Gl_4(k_0)$ is the matrix $$\begin{bmatrix}
        1 & 0 & 0 & 0\\
        0 & 1 & 0 & 0\\
        0 & 1 & 0 & 1\\
        0 & 0 & 1 & 0
    \end{bmatrix}.$$ 
    Therefore, by equivariance, 
    $$\s_2'(K\cdot(1,\a))=\s_4\Big(K\cdot(1,\a)\otimes K\cdot(1,\a)\Big)=\mu\cdot(\s_3(N)\oplus 0)=\{(a,b,b,c):(a,b,c)\in\s_3(N)\},$$
    so $(0,0,x_1,x_2)=(a,b,b,c)$ for some $(a,b,c)\in\s_3(N).$ This implies that $a=0=x_1=b,$ hence $\e_0:=c=x_2$ has to be non-zero as $(0,0,x_1,x_2)$ was not zero. We have proved that there is some non-zero $\e_0\in M$ such that $(0,0,\e_0)\in\s_3(N)=\s_3(\{(x,\a x, \a^{2}x):x\in K\}).$   
    The identity $$\begin{bmatrix}
        0 & 1 & 1\\
        0 & 0 & 1\\
        1 & 0 & 1
    \end{bmatrix}\cdot\begin{bmatrix}
        \e_0 & 0 & 0\\
        0 & \e_1 & 0\\
        0 & 0 & \e_2
    \end{bmatrix}=\begin{bmatrix}
        0 & \e_1 & \e_2\\
        0 & 0 & \e_2\\
        \e_0 & 0 & \e_2
    \end{bmatrix}$$
    and the fact that the left-most matrix is in $\Gl_3(k_0)$ shows that the $S$-submodule of $\s_3\Big(\{(x,\a x,\a^{2}x):x\in K\}\Big)$ generated by $(0,0,\e_0),$ $(\e_1,0,0)$ and $(\e_2,\e_2,\e_2)$ has length three, which is absurd: the length of $\s_3\Big(\{(x,\a x,\a^{2}x):x\in K\}\Big)$ is $2\cdot\dim_K(K\cdot(1,\a,\a^{2}))=2,$ because $\s$ is a $2$-inflator.
\end{proof}

\subsection{A formula for the Inflator}

The line of argument in Sections 2 and 3 of \cite{dp4} holds verbatim for fields of characteristic 2. In particular, any inflator $\s:\Dir_K(K)\to D_{\bullet}$ satisfying the Weak Assumptions admits a mutation $\s':\Dir_K(K)\to D_{\bullet}'$ which is \emph{isotypical}, meaning that the directory $D_{\bullet}'$ is isomorphic to $\Dir_S(A\oplus A)$ where $A$ is an $S$-module of length 1. 
Indeed, it is enough to mutate along a line in $K^{2}$ whose slope is wild, and wild elements of $K$ exist by Remark \ref{wildex}, cf.~\cite[Corollary 2.3]{dp4}. Since the conditions of the Weak Assumptions are mutation-invariant, the mutation $\s'$ satisfies the Weak Assumptions as well. 
\begin{defin}
    \label{str}
We say that an inflator satisfies the \emph{Strong Assumptions} if it is isotypical and satisfies the Weak Assumptions.    
\end{defin}

The following fact is stated for fields of characteristic different than 2, but we note that its proof is independent of the characteristic of the field. Hence, we decide to state it as a fact that applies to fields of \emph{arbitrary} characteristic, even if this does not correspond to the original statement. Note that malleability of $\s$ intervenes in the proof of this fact.

\begin{fact}[Cf.~{\cite[Theorem 3.15]{dp4}}]
\label{ingr}
    Let $\s$ be a $k_0$-linear 2-inflator on a field $K$ of arbitrary characteristic satisfying the Strong Assumptions. Then there are
    \begin{itemize}[wide]
        \item A field $k$ extending $k_0,$
        
        \item A subring $R\subseteq K,$

        \item An ideal $I\leq R,$

        \item An isomorphism of $k_0$-algebras $R/I\cong k[\e]:=k[\e]/(\e^{2})$ 
    \end{itemize}
    such that $\s$ is isomorphic to 
    \begin{align*}
        \s:\Dir_K(K)&\to\Dir_k(k[\e])\\
        \s_n(V)&=\{(\gres(x_1),\.,\gres(x_n)):(x_1,\.,x_n)\in V\cap R^{n}\},
    \end{align*}
    where $\gres$ is the quotient map $R\twoheadrightarrow R/I\cong k[\e].$ Moreover,
    \begin{itemize}[wide]
        \item $R=R_\s$ is the fundamental ring of $\s,$ $I$ is the fundamental ideal, and $\gres$ is the generalized residue map.

        \item $R$ is a local ring, whose unique maximal ideal is the pullback of $k\cdot\e$ along $\gres$.

        \item $\Frac(R)=K.$ 
    \end{itemize}
\end{fact}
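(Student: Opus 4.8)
The plan is to transcribe Johnson's proof of \cite[Theorem 3.15]{dp4} and audit it for any hidden dependence on $\char(K)\neq 2$, since the claim is precisely that there is none. The starting point is already recorded above: as $\s$ satisfies the Strong Assumptions it is isotypical, so after replacing $D_{\bullet}$ by an isomorphic directory we may assume $D_{\bullet}=\Dir_S(M)$ with $M=A\oplus A$, $A$ a simple $S$-module, and $\mathbb{D}:=\End_S(A)$ a division $k_0$-algebra, so that $\End_S(M)\cong M_2(\mathbb{D})$. By Fact \ref{fundring} we have $R:=R_\s$, $I:=I_\s$ and the generalized residue $\gres:R\to\End_S(M)$, a $k_0$-algebra homomorphism with $\ker\gres=I$, with $1+I\subseteq R^{\times}$, and with $I$ inside the Jacobson radical of $R$. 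Since $R\subseteq K$ is commutative, $\gres$ induces an injection $\bar R:=R/I\hookrightarrow M_2(\mathbb{D})$ exhibiting $\bar R$ as a commutative $k_0$-subalgebra of $M_2(\mathbb{D})$ acting faithfully on the finite-length $S$-module $M$; hence $\bar R$ is a commutative Artinian ring, a finite product of local Artinian rings.

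The first substantive step is to prove $\bar R$ (equivalently $R$) is \emph{local}. If not, $\bar R$ splits as a product $\bar R_1\times\bar R_2$ with both factors acting nontrivially on $M$; the length-$2$ bound on $M$ then forces the corresponding $S$-summands of $M$ to be simple and each $\bar R_i$ to be a subfield of $\mathbb{D}$, and unwinding along $\gres$ produces two comaximal primes $\p_1,\p_2\le R$ with $\p_1\cap\p_2=I$. Exactly as in \cite{dp4} this is incompatible with the Weak Assumptions: it leads to $\s$, or a mutation of it, being of weak $V^{2}$-type. The only idempotent bookkeeping involved is that a lift $e\in R$ of a nontrivial idempotent of $\bar R$ satisfies $eR+(e-1)R=R$ and $e(e-1)\in I$, whence $e$ or $1-e$ lies in $I$ because $1+I\subseteq R^{\times}$; this is characteristic-free, and in particular needs no quadratic formula or Artin--Schreier argument. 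So $\bar R$ is local; write $\m\supseteq I$ for the maximal ideal of $R$, $\bar\m=\m/I$ for that of $\bar R$, and $k:=R/\m$.

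Next one pins down the shape of $\bar R$. The chain $M\supseteq\bar\m M\supseteq\bar\m^{2}M\supseteq\cdots$ consists of $S$-submodules of the length-$2$ module $M$, and since $\bar\m$ is nilpotent Nakayama forbids $\bar\m M=M$; so either $\bar\m M=0$ or $\bar\m M$ is $S$-simple and $\bar\m^{2}M=0$. In the first case $\bar\m=0$ by faithfulness, i.e.\ $\bar R=k$ is a field and $R$ is local with maximal ideal $I$, which again puts $\s$ in weak $V^{1}$-type (hence weak $V^{2}$-type), excluded. So $\bar\m M$ is $S$-simple, $\bar\m^{2}=0$ in $\bar R$, and $\bar\m$ is a $k$-vector space. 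That $\dim_k\bar\m=1$ --- equivalently $\bar R\cong k[\e]/(\e^{2})$ as $k_0$-algebras, with $k$ the field in the statement --- is the heart of the matter: one argues, following \cite[Theorem 3.15]{dp4}, that a larger $\bar\m$ would again expose a further product factor or valuative structure inside $R$, using the wild/tame dichotomy for $\s$ (were every element of $K$ tame, $R$ would be a multi-valuation ring with at most two maximal ideals, cf.\ \cite[Proposition 5.25]{dp3}), the non-existence of valuative balls in $R$, and malleability to transfer between subobjects of $K^{n}$ and their $\s$-images. This is the step I expect to be the main obstacle and the one to scrutinise hardest for a characteristic-$2$ issue; as throughout Sections 2--3 of \cite{dp4}, however, the arguments are linear-algebraic --- Kronecker products and $\Gl_n(k_0)$-equivariance, in the style of the proof of Fact \ref{tamingwild} --- and never invert $2$.

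It remains to read off the statement. Tracking the identifications gives $R/I\cong k[\e]$ with $I=\ker\gres$ the fundamental ideal, maximal ideal $\m=\gres^{-1}(k\e)$, and the displayed formula for $\s_n$ from $\s_2(\T_a)=\T_{\gres(a)}$ for $a\in R$ together with additivity of $\s$ under $\oplus$, its $\Gl_n(k_0)$-equivariance, and malleability, which force $\s_n(V)=\{(\gres(x_1),\dots,\gres(x_n)):(x_1,\dots,x_n)\in V\cap R^{n}\}$ for every $V\le K^{n}$. Finally $\Frac(R)=K$: otherwise $F:=\Frac(R)$ is a proper subfield, and an element $\a\in K\setminus F$ would contradict the inflator equation $l(\s_n(V))=2\dim_K(V)$ along a suitable line through $\a$ or the local, non-multivaluation structure of $R$ just obtained --- again Johnson's argument, insensitive to the characteristic. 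Assembling these points yields Fact \ref{ingr} in all characteristics.
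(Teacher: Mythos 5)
Your proposal takes essentially the same route as the paper, because the paper does not actually prove this statement: it is imported wholesale as a Fact from Johnson's \cite[Theorem 3.15]{dp4}, with the only added content being the observation that Johnson's proof nowhere uses $\char(K)\neq 2$ (and the remark that malleability is used). Your write-up is exactly that — a partial transcription of Johnson's argument with an audit for characteristic dependence, deferring the substantive steps (locality via the exclusion of weak $V^{2}$-type mutations, $\dim_k\bar\m=1$, the formula for $\s_n$, and $\Frac(R)=K$) back to \cite{dp4} — so in spirit it matches what the paper does.

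One caveat on the single piece of ``bookkeeping'' you do carry out yourself: the claim that $e(e-1)\in I$ together with $1+I\subseteq R^{\times}$ forces $e\in I$ or $1-e\in I$ is false as a general ring-theoretic statement. For instance, take $R=\Z_{(2)}\cap\Z_{(3)}$ (a semilocal domain), $I=J(R)=6R$ and $e=4$: then $e(e-1)=12\in I$, $1+I\subseteq R^{\times}$, yet neither $4$ nor $-3$ lies in $6R$, and indeed $R/I\cong\Z/6$ has nontrivial idempotents. This is precisely the point: a decomposition $\bar R\cong \bar R_1\times\bar R_2$ cannot be excluded by formal idempotent lifting; it is excluded only because the two resulting comaximal kernels would exhibit $\s$ (or a mutation) as weakly of $V^{2}$-type, which is the route you correctly attribute to Johnson in the preceding sentence. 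Since that exclusion is characteristic-free, your overall conclusion stands, but the idempotent aside should be dropped or replaced by the $V^{2}$-type argument it is meant to summarize.
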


Through such analysis, Johnson finds that if $\a\in K$ specializes to $\mu\in\End_k(k^2)=M_2(k),$ then $\mu$ has a repeated eigenvalue in $k$, cf.~\cite[Corollary 3.13]{dp4}. If $\mu=\begin{bmatrix}
    a & b\\
    c & d\\
\end{bmatrix},$ then its characteristic polynomial $X^2-\tr(\mu)X+\det(\mu)\in k[X]$ has one root $\l\in k$ of algebraic multiplicity 2, i.e.~
$X^2-\tr(\mu)X+\det(\mu)=(X-\l)^2=X^2+\l^2,$
from which we conclude that $-\tr(\mu)=\tr(\mu)=a+d=0$ and $\det(\mu)=ad-bc=\l^{2}.$ It follows that $a=-d=d$ and that $\det(\mu)=a^2-bc=a^2+bc=\l^2.$ In fact, this argument holds for \emph{any} malleable isotypical 2-inflator $\xi$ with no weak $V^2$-type mutations. We highlight this phenomenon as a corollary. 
\begin{cor}
\label{spf}
    Let $\xi:\Dir_K(K)\to\Dir_k(N)$ be a 2-inflator satisfying the Strong Assumptions. If $\a\in R_\xi,$ then it specializes to a matrix $\mu\in M_2(k)$ whose trace is null and whose determinant is a square in $k.$  
\end{cor}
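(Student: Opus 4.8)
The plan is to reduce at once to Johnson's structural analysis of isotypical $2$-inflators — which was recorded in the subsection \say{A formula for the Inflator} as surviving verbatim in characteristic $2$ — and then to perform the one-line coefficient comparison that is special to characteristic $2$. Concretely, I would first invoke \cite[Corollary 3.13]{dp4}: since $\xi$ satisfies the Strong Assumptions it is a malleable isotypical $2$-inflator with no weak $V^2$-type mutations, which is all that the proof of that corollary uses. Hence, for $\a\in R_\xi$, the element $\a$ specializes with respect to $\xi$ to an endomorphism of the length-$2$ object $N$, and, after fixing a $k$-basis of $N$ so that $\End_k(N)=M_2(k)$, the resulting matrix $\mu\in M_2(k)$ has a single eigenvalue $\l\in k$ of algebraic multiplicity $2$. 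Equivalently, by Fact \ref{ingr} one may replace $\xi$ by an isomorphic copy with $N=k[\e]/(\e^2)$ and $\gres\colon R_\xi\twoheadrightarrow R_\xi/I_\xi\cong k[\e]$ the quotient map; then $\mu$ is multiplication by $\gres(\a)$ on $k[\e]$, which in the basis $\{1,\e\}$ is upper triangular with equal diagonal entries, so the repeated eigenvalue is visible directly.

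Next I would run the characteristic-polynomial computation. The characteristic polynomial of $\mu\in M_2(k)$ is $X^2-\tr(\mu)X+\det(\mu)$, and since $\mu$ has $\l$ as its only eigenvalue, with multiplicity $2$, this polynomial equals $(X-\l)^2$. In characteristic $2$ one has $(X-\l)^2=X^2-2\l X+\l^2=X^2+\l^2$ because $2\l=0$. Matching the coefficient of $X$ gives $\tr(\mu)=0$, and matching the constant terms gives $\det(\mu)=\l^2$, which is a square in $k$. That is exactly the assertion of the corollary.

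The only point that requires any care is the appeal to \cite[Corollary 3.13]{dp4} in characteristic $2$: one must be sure that its hypotheses — malleability, isotypicality, and the absence of weak $V^2$-type mutations — are genuinely all that its proof uses, and that none of them interacts with $\char(K)$. This is precisely the content of the verbatim-transfer remark opening that subsection, so I do not expect a substantive obstacle; the remaining computation is entirely routine.
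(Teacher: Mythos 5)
Your proposal is correct and follows essentially the same route as the paper: invoke \cite[Corollary 3.13]{dp4}, whose proof only uses malleability, isotypicality and the absence of weak $V^2$-type mutations, to get a repeated eigenvalue $\l\in k$ for the specialization $\mu$, and then compare coefficients of $X^2-\tr(\mu)X+\det(\mu)=(X-\l)^2=X^2+\l^2$ in characteristic $2$ to conclude $\tr(\mu)=0$ and $\det(\mu)=\l^2$. The extra observation via Fact \ref{ingr} that $\mu$ is multiplication by $\gres(\a)$ on $k[\e]$ is a harmless alternative way to see the repeated eigenvalue, but is not needed.
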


\section{The Associated Valuation}
\label{sec3}

In this section we adopt the notations of Chapter 4 of \cite{dp4}, where
$Q:=\{\a\in R:\gres(\a)=x+0\e\text{ for some }x\in k\}$ is a local subring of $R$ having $I$ as its maximal ideal, and $\p:=\{\a\in R:\gres(\a)=0+x\e\text{ for some }x\in k\}$ is the maximal ideal of $R.$ 

\begin{remark}
\label{qproper}
$Q$ is a proper subring of $R.$ Indeed, if $R=Q,$ then $\p=I$ and thus $R/\p=R/I\cong k[\e]$ is a field, which is absurd.
\end{remark}

\begin{lema}[Cf.~{\cite[Lemma 4.3]{dp4}}]
\label{quad}
    Suppose $\a\in K$ satisfies some monic quadratic equation over $R,$ i.e.~$\a^2+b\a+c=0$ for some $b,c\in R.$ If $\a$ is wild, then $b\in\p$ and $c+\p$ is a square in $R/\p.$ 
\end{lema}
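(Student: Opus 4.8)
\textit{Proof plan.} The strategy is to mutate $\s$ along the line $L:=\T_\a=K\cdot(1,\a)$ and to read off a quadratic relation satisfied by the specialization of $\a$ with respect to this mutation. Let $\s'$ denote the mutation of $\s$ along $L$. The Weak Assumptions are mutation-invariant, and the target directory $\Dir_k(\s_2(L))$ of $\s'$ lives in the category of $k$-vector spaces and is therefore automatically isotypical (its length-$2$ object $\s_2(L)$ is isomorphic to $k\oplus k$), so $\s'$ satisfies the Strong Assumptions and Corollary \ref{spf} applies to it.

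The first step is to identify the module $M':=\s_2(L)$. Using the formula for $\s$ from Fact \ref{ingr},
\[ M'=\s_2(L)=\{\,(\gres x,\gres(\a x)):x\in R,\ \a x\in R\,\}, \]
a $k[\e]$-submodule of $k[\e]^{2}$ of $k$-dimension $2$. Since $\a$ is wild we have $\a\notin R$ and $\a^{-1}\notin R$ (both lie in $S_\a$), which is precisely the assertion that neither of the two coordinate projections $M'\to k[\e]$ is injective. Hence $M'$ contains nonzero vectors $(\eta_1,0)$ and $(0,\eta_2)$, and a dimension count gives $M'=k(\eta_1,0)\oplus k(0,\eta_2)$. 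As $M'$ is stable under multiplication by $\e$, were $\e\eta_1$ or $\e\eta_2$ nonzero, $M'$ would be contained in $k[\e]\oplus 0$ or in $0\oplus k[\e]$, contradicting this decomposition; therefore $\eta_1,\eta_2\in k\e$ and $M'=(k\e)^{2}$. In particular $\e$ annihilates $M'$.

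The second step is to compute the specialization. With $N:=K\cdot(1,\a,\a^{2})$, the computation inside the proof of Fact \ref{tamingwild} identifies $\s'_2(\T_\a)$ with $\{\,(u,v,v,w):(u,v,w)\in\s_3(N)\,\}$, sitting inside $(M')^{2}$ so that an element $(u,v,v,w)$ has first component $(u,v)$ and second component $(v,w)$; and again by Fact \ref{ingr}, $\s_3(N)=\{\,(\gres x,\gres(\a x),\gres(\a^{2}x)):x\in R,\ \a x\in R\,\}$. Because $\gres$ is a ring homomorphism and $\a^{2}x=-b\,\a x-c\,x$, every $(u,v,w)\in\s_3(N)$ satisfies $w=-\gres(b)v-\gres(c)u$; in particular $u=v=0$ forces $w=0$, so $\s'_2(\T_\a)$ is a graph, i.e.\ $\a\in R_{\s'}$, and the specialization of $\a$ is the endomorphism $(u,v)\mapsto(v,\,-\gres(b)v-\gres(c)u)$ of $M'$. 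Since $\e$ kills $M'=(k\e)^{2}$, multiplication by $\gres(b)$ (resp.\ $\gres(c)$) on $M'$ is multiplication by its image $\ol b$ (resp.\ $\ol c$) under $R\twoheadrightarrow R/\p\cong k$; hence in the $k$-basis $\{(\e,0),(0,\e)\}$ of $M'$ the specialization of $\a$ is the companion matrix $\begin{pmatrix}0&1\\-\ol c&-\ol b\end{pmatrix}$.

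Finally, Corollary \ref{spf}, applied to $\s'$ and to $\a\in R_{\s'}$, says that this matrix has null trace and square determinant. Its trace is $-\ol b=\ol b$ (here $\char(K)=2$) and its determinant is $\ol c$, so $\ol b=0$, that is $b\in\p$, and $\ol c=c+\p$ is a square in $R/\p$, as claimed. The main obstacle is the second step --- establishing $\s_2(L)=(k\e)^{2}$ --- together with the bookkeeping of which coordinates carry which copy of $M'$ in the mutation formula; once these are settled the conclusion is immediate from Corollary \ref{spf}.
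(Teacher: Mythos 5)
Your proposal is correct and takes essentially the same route as the paper: mutate $\s$ along $K\cdot(1,\a)$, identify the specialization of $\a$ with respect to the mutation as the companion matrix $\begin{pmatrix}0 & 1\\ -\ol{c} & -\ol{b}\end{pmatrix}$, and apply Corollary \ref{spf} to get null trace and square determinant. The only difference is presentational: you re-derive the identification $\s_2(K\cdot(1,\a))=(k\e)^{2}$ and the graphness of $\s_2'(\T_\a)$ directly from the formula in Fact \ref{ingr}, where the paper imports these computations from the proof of \cite[Lemma 4.3]{dp4} and from Fact \ref{tamingwild}.
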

\begin{proof}
    Let $b_0,b_1,c_0,c_1\in k$ be such that $\gres(b)=b_0+b_1\e$ and $\gres(c)=c_0+c_1\e.$
    Since $R/\p\cong k,$ we know that $b\in\p$ if and only if $b_0=0$ and $c+\p$ is a square in $R/\p,$ which in turn is equivalent to $c_0$ being a square in $k.$ Following the proof of \cite[Lemma 4.3]{dp4}, if $\s'$ is the mutation of $\s$ along the line $L=K\cdot(1,\a),$ then 
    $$\s_2'(L)=\s_4\Big(\{(x,\a x,\a x, \a^2 x):x\in K\}\Big)=\{(s,t,t,-b_0t-c_0t):s,t\in k\},$$
    is the graph of the endomorphism $\mu:k^2\to k^2$ for which $\mu(s,t)=(t,-b_0t-c_0s),$ i.e.~of the matrix $$\mu=\begin{bmatrix}
        0 & 1\\
        -c_0 & -b_0
    \end{bmatrix}.$$  
    Since we mutated along $K\cdot(1,\a),$ Fact \ref{tamingwild} tells us that $\a$ is tame with respect to $\s',$ and moreover $\a\in R'$ specializes to $\mu.$ Since mutation preserves malleability and isotopy, and a mutation of $\s'$ is again a mutation of $\s,$ then $\s'$ also satisfies the Strong Assumptions. Hence $b_0=\tr(\mu)=0$ and $c_0=\det(\mu)$ is a square in $k,$ by Corollary \ref{spf}.
\end{proof}

\begin{remark}
    With the hypotheses of Lemma \ref{quad}, we can even choose some $d\in Q$ such that $c-d^2\in\p.$ Indeed, if $\gres(c)=c_0+c_1\e$ and $c_0=d_0^2$ for some $d_0\in k,$ then there is some $d\in Q$ such that $\gres(d)=d_0,$ because $\gres$ is onto. Therefore $\gres(d^2)=d_0^2=c_0,$ so $\gres(c-d^2)=c_1\e\in k\e,$ i.e.~$c-d^2\in\p.$ 
\end{remark}

Recall the following fact. It is true independently of the characteristic of $K.$

\begin{fact}[Cf.~{\cite[Lemma 4.1]{dp4}}]
\label{w2}
    Let $a,b,c\in K.$ Then there exist some $x,y,z\in R$ such that $ax+by+cz=0$ and at least one of $x,y$ and $z$ lies in $R^{\times}.$
\end{fact}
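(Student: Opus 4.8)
The plan is to produce the relation inside the explicit model of $\s$ furnished by Fact \ref{ingr} and to read it off from a dimension count. Fix $a,b,c\in K$ and let $V\leq K^{3}$ be the kernel of the $K$-linear functional $(x,y,z)\mapsto ax+by+cz$. Its image has dimension at most $1$, so $\dim_K V\geq 2$, and since $\s$ is a $2$-inflator we get $l(\s_3(V))=2\dim_K V\geq 4$. By Fact \ref{ingr} we may assume $\s$ is presented so that $\s_3(V)=\{(\gres(x),\gres(y),\gres(z)):(x,y,z)\in V\cap R^{3}\}$, with $\gres\colon R\twoheadrightarrow R/I\cong k[\e]$; in particular $\s_3(V)$ is a $k$-subspace of $(k[\e])^{3}$ of dimension $\geq 4$, and every element of $\s_3(V)$ lifts to some $(x,y,z)\in V\cap R^{3}$.

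Next I would note that the non-units of $k[\e]=k[\e]/(\e^{2})$ form the $1$-dimensional $k$-subspace $k\e$, so $(k\e)^{3}$ is a $3$-dimensional $k$-subspace of $(k[\e])^{3}$. As $\dim_k\s_3(V)\geq 4>3$, the subspace $\s_3(V)$ cannot be contained in $(k\e)^{3}$, so there is an element $(u,v,w)\in\s_3(V)$ with, say, $u\notin k\e$, that is $u\in k[\e]^{\times}$. Lifting, write $(u,v,w)=(\gres(x),\gres(y),\gres(z))$ with $(x,y,z)\in V\cap R^{3}$, so that $x,y,z\in R$ and $ax+by+cz=0$. Finally, Fact \ref{ingr} tells us that $R$ is local with maximal ideal $\p=\gres^{-1}(k\e)$; since $\gres(x)=u\notin k\e$ we get $x\notin\p$, i.e.\ $x\in R^{\times}$, which is the unit required.

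I do not expect a genuine obstacle here: everything is packaged into Fact \ref{ingr} together with the length identity $l(\s_n(V))=2\dim_K V$ for $2$-inflators, and this is exactly why the statement is insensitive to $\char(K)$. The only care needed is the bookkeeping identifying units of $R$ with the elements whose generalized residue avoids $k\e$, which is immediate from the local structure of $R$ recorded in Fact \ref{ingr}. A more elementary route --- clear denominators to reduce to $a,b,c\in R$, then split into cases according to which of $a,b,c$ is a unit --- settles every case except the one where $a,b,c$ all lie in $\p$, and that remaining case seems to require precisely the inflator input above, so I would use the uniform argument from the outset.
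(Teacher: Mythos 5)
Your argument is correct and is essentially the proof the paper points to (the fact is quoted from \cite[Lemma 4.1]{dp4}, whose proof is exactly this length count): $\s_3$ of the plane $ax+by+cz=0$ has length $2\dim_K V\geq 4$, hence cannot be contained in $(k\e)^{3}$, which has length $3$, and lifting through the explicit formula of Fact \ref{ingr} yields a triple in $V\cap R^{3}$ with one coordinate outside $\p=\gres^{-1}(k\e)$, i.e.\ a unit of the local ring $R$. Nothing further is needed, and, as you note, the argument is visibly independent of $\char(K)$.
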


\begin{lema}[Cf.~{\cite[Lemma 4.4]{dp4}}]
\label{degwild}
    Let $\a\in K^{\times}.$ Then $\a$ or $\a^{-1}$ is integral over $R.$ Moreover, one of $\a$ or $\a^{-1}$ satisfies a monic polynomial of degree $d,$ where $$d=\begin{cases}
        1 &\text{ if }\a\text{ is tame,}\\
        2 &\text{ if }\a\text{ is wild.}
    \end{cases}$$ 
\end{lema}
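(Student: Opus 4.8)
The plan is to split into two cases according to whether $\a$ is tame or wild, the backbone being the elementary observation that $\a$ is tame if and only if $\a\in R$ or $\a^{-1}\in R$. The easy direction is immediate: $\a\in R$ puts $\a\in S_\a\cap R$, and $\a^{-1}\in R$ puts $\a^{-1}=1/(\a-0)\in S_\a\cap R$ since $0\in k_0\subseteq R$, so either way $S_\a\cap R\neq\emptyset$. For the converse, if $\a$ is tame then some element of $S_\a$ lies in $R$; if it is $\a$ itself we are done, and if it is $u:=1/(\a-q)$ for some $q\in k_0$, then $\a=q+u^{-1}$, and either $u\in R^{\times}$, so $\a\in R$, or $u\in\p$, in which case $qu+1\in 1+\p\subseteq R^{\times}$ and $\a^{-1}=u/(qu+1)\in R$. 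This already settles the lemma in the tame case with $d=1$, the relevant member of $\{\a,\a^{-1}\}$ being a root of a monic linear polynomial over $R$.

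For the wild case the observation gives $\a\notin R$ and $\a^{-1}\notin R$, so neither satisfies a monic linear polynomial over $R$ and it suffices to produce a monic quadratic. I would apply Fact~\ref{w2} to the triple $(1,\a,\a^{2})$ to obtain $x+y\a+z\a^{2}=0$ with at least one of $x,y,z$ in $R^{\times}$. If $z\in R^{\times}$, then $\a^{2}+(z^{-1}y)\a+(z^{-1}x)=0$ exhibits $\a$ as integral over $R$; if $z\notin R^{\times}$ but $x\in R^{\times}$, dividing by $\a^{2}$ yields $\a^{-2}+(x^{-1}y)\a^{-1}+(x^{-1}z)=0$, so $\a^{-1}$ is integral over $R$. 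As $\a,\a^{-1}\notin R$, in either case the degree is exactly $2$, which is what is wanted. The remaining possibility is $y\in R^{\times}$ with $x,z\in\p$, and after dividing by $y$ this reads $c\a^{2}+\a+b=0$ with $b,c\in\p$; the main obstacle is to show that this cannot occur.

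To rule it out I would argue as follows. First $c\neq 0$, for otherwise $\a=-b\in R$, and likewise $b\neq 0$, for otherwise $\a^{-1}=-c\in R$. Set $\gamma:=c\a\neq 0$ and multiply the relation by $c$ to get $\gamma^{2}+\gamma+bc=0$, a monic quadratic over $R$ whose linear coefficient is $1\notin\p$; Lemma~\ref{quad} then forbids $\gamma$ from being wild, so $\gamma$ is tame and the observation of the first paragraph gives $\gamma\in R$ or $\gamma^{-1}\in R$. If $\gamma^{-1}\in R$, then $\a^{-1}=c\gamma^{-1}\in R$, impossible; hence $\gamma\in R$, and if furthermore $\gamma\in R^{\times}$ then again $\a^{-1}=c\gamma^{-1}\in R$, impossible; so $\gamma\in\p$. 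Finally, from $c\a^{2}+\a+b=0$ one gets $b\a^{-1}=-(c\a+1)=-(\gamma+1)\in-(1+\p)\subseteq R^{\times}$, whence $b^{-1}\a=(b\a^{-1})^{-1}\in R$ and therefore $\a=b\cdot(b^{-1}\a)\in R$ since $b\in\p\subseteq R$ --- contradicting $\a\notin R$. This contradiction eliminates the remaining case and completes the wild case with $d=2$.

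I expect the only genuinely delicate point to be this last reduction: one must pass from the non-monic relation $c\a^{2}+\a+b=0$ to the auxiliary element $\gamma=c\a$, use Lemma~\ref{quad} to force $\gamma$ tame, and then repeatedly exploit the locality of $R$ (that $R^{\times}=R\setminus\p$ and $1+\p\subseteq R^{\times}$) to transfer integrality from $\gamma$ back down to $\a$. Everything else is routine bookkeeping with Fact~\ref{w2} and the definition of the tame locus, and --- in the spirit of the paper's conventions --- no hypothesis on the cardinality of $k_0$ is used beyond $0\in k_0\subseteq R$.
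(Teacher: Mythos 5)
Your proof is correct, and while its skeleton coincides with the paper's (handle the tame case by producing $\a$ or $\a^{-1}$ in $R$; in the wild case invoke Fact~\ref{w2} for $(1,\a,\a^{2})$, split on which coefficient is a unit, and use Lemma~\ref{quad} to kill the case $y\in R^{\times}$, $x,z\in\p$), the way you eliminate that last case is genuinely different. The paper performs the M\"obius substitution $\b=\a/(\a+1)$ --- a computation tailored to characteristic $2$ --- to turn the relation into a monic quadratic for $\b$ with linear coefficient $y/(x+y+z)\equiv 1\bmod\p$, and then appeals to the fact that $\b$, being a fractional linear transform of $\a$ over $k_0$, is still wild, so Lemma~\ref{quad} gives an immediate contradiction. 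You instead rescale, setting $\gamma=c\a$ so that $\gamma^{2}+\gamma+bc=0$; since the linear coefficient is $1\notin\p$, Lemma~\ref{quad} forces $\gamma$ to be \emph{tame}, and you then transfer this back to $\a$ via the locality of $R$ (using $R\setminus R^{\times}=\p$, $1+\p\subseteq R^{\times}$, $k_0\subseteq R$, all available from Facts~\ref{fundring} and~\ref{ingr}) to conclude $\a\in R$ or $\a^{-1}\in R$, contradicting wildness. Your route buys two things: it is characteristic-free (no char-$2$ identity like $\b^{2}+1=(\b+1)^{2}$ is used) and it avoids invoking the invariance of wildness under $k_0$-fractional linear transformations, at the modest cost of the extra bookkeeping with $\gamma$ and of proving explicitly the equivalence ``tame iff $\a\in R$ or $\a^{-1}\in R$'' --- which the paper simply delegates to Johnson for the tame case, and which you rightly establish from the definitions rather than from Fact~\ref{wildpos}, thereby avoiding any circularity.
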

\begin{proof}
    As in the proof of \cite[Lemma 4.4]{dp4}, the result holds if $\a$ is tame. Suppose that $\a$ is wild. By Fact \ref{w2}, there are some $x,y,z\in R$ such that $x+y\a+z\a^2=0,$ and at least one of $x,y$ and $z$ is invertible in $R.$ If $z\in R^{\times}$ or if $x\in R^{\times}$ then $\a$ and $\a^{-1}$ are integral over $R$ respectively, in each case witnessed by a monic quadratic polynomial over $R,$ as wanted. Thus we may assume that $y\in R^{\times}$ and that $x,z\in R\setminus R^{\times}=\p.$ In particular $x+y+z\in R^{\times},$ otherwise $y=(x+y+z)+x+z\in\p,$ which is impossible.
    
    Let $\b=\dfrac{\a}{\a+1},$ so that $\a\b+\b=\a$ and $\a=\dfrac{\b}{\b+1}.$ Then 
    \begin{align*}
        x+y\a+z\a^2&=0\\
        x+y\dfrac{\b}{\b+1}+z\dfrac{\b^2}{\b^2+1}&=0\\
        x(\b^2+1)+y\b(\b+1)+z\b^2&=0\\
        (x+y+z)\b^2+y\b+x&=0\\
        \b^2+\dfrac{y}{x+y+z}\b+\dfrac{x}{x+y+z}&=0.
    \end{align*}
    Since $\b$ is a fractional linear transformation of $\a,$ it is wild as well. By Lemma \ref{quad}, $\dfrac{y}{x+y+z}\equiv 0\text{ mod }\p,$ but $\dfrac{y}{x+y+z}\equiv\dfrac{y}{y}=1\text{ mod }\p,$ which is absurd. 
\end{proof}

\begin{cor}
\label{intclval}
    The integral closure $\O$ of $R$ (in $K$) is a valuation ring of $K,$ and if $\a\in\M\setminus R,$ then there are some $p,q\in R$ such that $\a^{2}+p\a+q=0.$
\end{cor}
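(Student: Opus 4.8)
The plan is to read off both assertions from Lemma \ref{degwild} together with the abstract characterization of valuation rings, so the argument should be very short. First I would recall that a subring $\O$ of a field $K$ with $\Frac(\O)=K$ is a valuation ring exactly when for every $\alpha\in K^{\times}$ one has $\alpha\in\O$ or $\alpha^{-1}\in\O$; no separate appeal to integral closedness is needed for this, even though the integral closure is of course automatically integrally closed. Since $R\subseteq\O\subseteq K$ and $\Frac(R)=K$ by Fact \ref{ingr}, we have $\Frac(\O)=K$. Now for $\alpha\in K^{\times}$, Lemma \ref{degwild} says $\alpha$ or $\alpha^{-1}$ is integral over $R$, hence lies in $\O$; that is precisely the required dichotomy, so $\O$ is a valuation ring of $K$. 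In particular $\O$ is local, and I write $\M$ for its maximal ideal.

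For the second claim I would take $\alpha\in\M\setminus R$. Because $\O$ is a valuation ring and $\alpha\in\M$, we have $\alpha\in\O$ but $\alpha^{-1}\notin\O$, i.e.\ $\alpha$ is integral over $R$ while $\alpha^{-1}$ is not. Applying Lemma \ref{degwild} to $\alpha$: the member of $\{\alpha,\alpha^{-1}\}$ that is integral over $R$ — which is forced to be $\alpha$, since $\alpha^{-1}$ is not a root of any monic polynomial over $R$ — satisfies a monic polynomial over $R$ of degree $d$, with $d=1$ if $\alpha$ is tame and $d=2$ if $\alpha$ is wild. If $d=1$ then $\alpha+r=0$ for some $r\in R$, forcing $\alpha=-r\in R$, contrary to $\alpha\notin R$. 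Hence $d=2$ (so $\alpha$ is wild), and $\alpha^{2}+p\alpha+q=0$ for suitable $p,q\in R$, as desired.

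I do not expect a genuine obstacle: everything is a direct invocation of Lemma \ref{degwild} and the definition of a valuation ring. The only point that wants a sentence of care is the remark that the "integral one" produced by Lemma \ref{degwild} must be $\alpha$ and not $\alpha^{-1}$, which is exactly the content of $\alpha\in\M$ meaning $\alpha^{-1}\notin\O$. If one prefers, the whole corollary can be packaged as: "for all $\alpha$, $\alpha$ or $\alpha^{-1}$ is integral over $R$" already characterizes the integral closure being a valuation ring, and the degree refinement in Lemma \ref{degwild} is what upgrades this to the monic quadratic over $R$.
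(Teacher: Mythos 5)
Your proof is correct and follows essentially the same route as the paper: both parts are read off from Lemma \ref{degwild}, with the valuation-ring claim coming from the standard ``$\a\in\O$ or $\a^{-1}\in\O$'' characterization. The only cosmetic difference is how the unwanted case is excluded: you note that $\a\in\M$ forces $\a^{-1}\notin\O$, so $\a^{-1}$ satisfies no monic polynomial over $R$ (and $d=1$ is impossible since $\a\notin R$), whereas the paper assumes a monic quadratic for $\a^{-1}$ and multiplies by $\a^{2}$ to get the contradiction $1=p\a+q\a^{2}\in\M$ --- the same fact in a different guise.
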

\begin{proof}
    We know that $\a$ is wild, so either the conclusion holds or there are some $p,q\in R$ such that $\a^{-2}+p\a^{-1}+q=0,$ meaning that $1+p\a+q\a^{2}=0.$ Then $1=p\a+q\a^{2}\in\M,$ an absurd.
\end{proof}

The exact same proofs of Section 4.3 of \cite{dp4} hold in characteristic 2. The following fact summarizes these results.
\begin{fact}[Cf.~{\cite[Lemma 4.6, Corollary 4.7 and Lemma 4.8]{dp4}}]
\label{wildpos}
Let $\M$ denote the maximal ideal of the valuation ring $\O.$ Then $R\cap \M=\p,$ $\O$ is non-trivial, and if $\a\in\O,$ then $\a$ is tame if and only if $\a\in R.$
\end{fact}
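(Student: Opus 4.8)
The plan is to prove the three assertions in turn, using Corollary~\ref{intclval} (so that $\O$ is a valuation ring, hence local with maximal ideal $\M$), Lemma~\ref{degwild}, and the structural description of $R$ from Fact~\ref{ingr}; most of the work is already done there, so what remains is bookkeeping with the localness of $R$ and of $\O$. First I would establish $R\cap\M=\p$. One inclusion is formal: $R\cap\M$ is an ideal of the local ring $R$ which does not contain $1$ (as $1\notin\M$), hence $R\cap\M\subseteq\p$. For the converse, let $0\neq\a\in\p$; then $\a\in R\subseteq\O$, and if $\a\notin\M$ then $\a\in\O^{\times}$ since $\O$ is a valuation ring, so $\a^{-1}\in\O$ is integral over $R$, say $\a^{-n}+r_{n-1}\a^{-(n-1)}+\cdots+r_0=0$ with $r_i\in R$. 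Multiplying through by $\a^{n}$ gives $1=-(r_{n-1}\a+\cdots+r_0\a^{n})$, which lies in $\p$ because $\a\in\p$ — absurd. Hence $\p\subseteq\M$, and combined with the first inclusion, $R\cap\M=\p$.

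For non-triviality of $\O$ it suffices to show $\M\neq 0$, and since $\p=R\cap\M\subseteq\M$ by the previous step it is enough that $\p\neq 0$. This follows from Fact~\ref{ingr}: $R$ is a subring of the field $K$, hence a domain, whereas $R/I\cong k[\e]/(\e^{2})$ is not reduced, so $I\neq 0$; and since $I=\ker(\gres)\subseteq\gres^{-1}(k\e)=\p$, we get $\p\neq 0$. Therefore $\M\neq 0$ and $\O\subsetneq K$ is non-trivial.

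Finally, for the equivalence: if $\a\in R=R_\s$, then $\a\in S_\a\cap R_\s$, so $\a$ is tame by definition (this uses nothing about $\O$). Conversely, let $\a\in\O$ be tame. By Lemma~\ref{degwild}, $\a$ or $\a^{-1}$ satisfies a monic \emph{linear} polynomial over $R$, that is, $\a\in R$ or $\a^{-1}\in R$; the first case is exactly what we want. In the second case, $\a$ and $\a^{-1}$ both lie in $\O$, so $\a$ is a unit of the valuation ring $\O$ and $\a^{-1}\notin\M$; hence $\a^{-1}\notin R\cap\M=\p$, so $\a^{-1}$ is a unit of the local ring $R$ and $\a\in R$.

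I do not anticipate a genuine obstacle: the substantive input ($\O$ a valuation ring, the degree dichotomy of Lemma~\ref{degwild}, and the structure of $R$) is already available, and the argument is purely formal from there. The only delicate point is ruling out $\a^{-1}\in\p$ in the last step, which is precisely where the identity $R\cap\M=\p$ is invoked — so the three parts must be carried out in the stated order.
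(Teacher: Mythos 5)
Your proof is correct and follows essentially the same route as the argument the paper defers to (Section 4.3 of \cite{dp4}): the formal inclusion $R\cap\M\subseteq\p$ plus the integrality trick for $\p\subseteq\M$, non-triviality via $0\neq I\subseteq\p\subseteq\M$ (using that $R$ is a domain while $k[\e]$ is not), and the degree-$1$ case of Lemma \ref{degwild} combined with locality of $R$ and $R\cap\M=\p$ for the tameness equivalence. The only cosmetic point is that Lemma \ref{degwild} applies to $\a\in K^{\times}$, so the trivial case $\a=0$ (which lies in $R$ and is tame) should be set aside explicitly.
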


\begin{propo}
\label{resfield}
    The induced map $k\cong R/\p=R/(R\cap\M)\hookrightarrow\O/\M$ is onto, hence an isomorphism. In particular, $\O$ has residue field isomorphic to $k.$
\end{propo}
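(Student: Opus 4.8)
The plan is to show that the map $R/\p\to\O/\M$ induced by the inclusion $R\hookrightarrow\O$ is \emph{surjective}; it is already injective, its kernel being $(R\cap\M)/\p$, which vanishes since $R\cap\M=\p$ by Fact~\ref{wildpos}. Surjectivity will then give an isomorphism $R/\p\cong\O/\M$, and together with $R/\p\cong k$ this shows the residue field of $\O$ is $k$. To prove surjectivity I would pick $\bar\a\in\O/\M$, lift it to some $\a\in\O$, and observe that if $\a\in R$ then $\bar\a$ is already in the image and if $\a\in\M$ then $\bar\a=0$ is in the image; so the only case to handle is $\a\in\O\setminus(R\cup\M)=\O^\times\setminus R$.

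So assume $\a\in\O^\times\setminus R$. The first step is that $\a^{-1}$ lies in the same set: it belongs to $\O$ because $\O$ is a valuation ring and $\a\notin\M$, and it cannot belong to $R$, since otherwise $\a^{-1}\in R\setminus\p=R^\times$ (using $\a^{-1}\notin\M\supseteq\p$ and that $\p$ is the maximal ideal of the local ring $R$), forcing $\a\in R$. Hence, by Fact~\ref{wildpos}, both $\a$ and $\a^{-1}$ are wild.

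By Lemma~\ref{degwild}, one of $\a,\a^{-1}$ satisfies a monic quadratic equation over $R$; and since the image of $R/\p$ in $\O/\M$ is a subfield and $\bar\a\neq 0$, it is enough to treat whichever of the two does (if it is $\a^{-1}$, then $\bar\a=(\overline{\a^{-1}})^{-1}$ lies in that subfield too). Say $\a^2+b\a+c=0$ with $b,c\in R$. As $\a$ is wild, Lemma~\ref{quad} gives $b\in\p$ and $c+\p$ a square in $R/\p$, and the remark following Lemma~\ref{quad} lets one pick $d\in Q\subseteq R$ with $c-d^2\in\p$. Reducing the equation modulo $\M$, and using $b\in\p\subseteq\M$ together with $\a\in\O$ (so that $b\a\in\M$) and $c-d^2\in\p\subseteq\M$, one gets $\bar\a^2+\bar c=0$ in the field $\O/\M$; since $\char(K)=2$ this reads $\bar\a^2=\bar c=\bar d^2$, and injectivity of the Frobenius endomorphism of the (characteristic-$2$) field $\O/\M$ forces $\bar\a=\bar d$, which lies in the image of $R/\p$ because $d\in R$. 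This proves surjectivity, and hence the proposition.

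I do not expect a genuine obstacle here: the argument is short once Lemmas~\ref{degwild} and~\ref{quad} are available, and its only real content is the last step $\bar\a^2=\bar d^2\Rightarrow\bar\a=\bar d$, which is exactly where the hypothesis $\char(K)=2$ is used (in odd characteristic one would recover only $\bar\a=\pm\bar d$, and Johnson's analogous step runs differently). The points that need a little care are the reduction to the case $\a\in\O^\times\setminus R$, the symmetry between $\a$ and $\a^{-1}$ that lets Lemma~\ref{degwild} and Fact~\ref{wildpos} be applied to either, and checking that the terms $b\a$ and $c-d^2$ genuinely vanish modulo $\M$.
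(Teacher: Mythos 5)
Your proof is correct and follows essentially the same route as the paper: reduce to $\a\in\O^{\times}\setminus R$, invoke Fact~\ref{wildpos} and Lemma~\ref{degwild}, and in the quadratic case use Lemma~\ref{quad} (with the remark after it) to reduce the equation modulo $\M$, the characteristic~$2$ identity $\bar\a^{2}=\bar d^{2}$ playing the role of the paper's $(x+d)^{2}\in\M$. Your phrasing via injectivity of Frobenius and via the image of $R/\p$ being a subfield (for the $\a^{-1}$ case) is only a cosmetic repackaging of the paper's argument, so there is nothing to add.
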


We denote by $\res:\O\to k$ the residue map. Note that if $\a\in R,$ then $\gres(\a)=x+y\e$ implies that $\res(\a)=x.$

\begin{proof}[Proof of Proposition \ref{resfield}]
    Let $x\in\O.$ We must find some $y\in R$ such that $x+\M=y+\M.$ If $x\in R,$ take $y=x.$ If $x\in\O\setminus R$ then $x$ is wild by Fact \ref{wildpos}. If $x\in\M,$ take $y=0.$ Hence we may assume that $x\in\O^{\times}\setminus R,$ from which, by Lemma \ref{degwild}, one of $x$ or $x^{-1}$ satisfies a monic quadratic equation over $R.$ We split our analysis accordingly.
    \begin{itemize}[wide]
        \item If $x^2+bx+c=0$ for some $b,c\in R,$ then by Lemma \ref{quad} we know that $b\in\p$ and $c+\p$ is a square in $R/\p,$ i.e.~there is some $d\in R$ such that $c+\p=d^2+\p.$ It follows that $c+\M=d^2+\M,$ so $x^2+bx+c=0$ implies that   
        $(x+d)^2\in\M.$ Therefore $x+d\in\M$ and thus $x+\M=d+\M$ as wanted.

        \item If $x^{-2}+bx^{-1}+c=0$ for some $b,c\in R,$ then the latter item shows that $x^{-1}+\M=d+\M$ for some $d\in R.$ Note that $d\in R^{\times},$ because otherwise $d\in\p\subseteq\M$ and $x^{-1}\in\M,$ contradicting the choice of $x\in\O^{\times}.$ It follows that $x+\M=d^{-1}+\M,$ as wanted.
        \qedhere
    \end{itemize}
\end{proof}

\section{Differential Structure and Density}
\label{sec4}

By well known facts about dual numbers, the map $\gres:R\to k[\e]$ induces a non-trivial derivation $\dd_0:R\to k,$ where $k$ is endowed with its natural $R$-module structure. Indeed, $\gres(x)=\res(x)+\dd_0(x)\cdot\e$ uniquely determines $\dd_0:R\to k.$ Under this point of view, $Q$ equals the subring of constants of $\dd_0.$
Recall that for any $a\in\O\setminus Q$ there is some $a^{\dagger}\in Q$ such that $aa^{\dagger}\in R\setminus Q.$ Any such $a^{\dagger}$ is called a \emph{neutralizer} of $a,$ cf.~\cite[Definition 5.5]{dp4}.
Also, in \cite[Definition 5.12]{dp4}, Johnson defines a secondary \say{valuation} map $\val_{\dd}:\O\to\G\cup\{\infty\},$ where $\G$ is the value group of $\O$ and where 
$$\val_{\dd}(a)=\begin{cases}
    \infty &\text{ if }a\in Q,\\
    -\val(a^{\dagger}) &\text{ if }a^{\dagger}\text{ is a neutralizer of }a.
\end{cases}$$
This secondary valuation is well defined: any element $a\in\O\setminus Q$ admits a neutralizer $a^{\dagger}$ and all neutralizers of $a$ have the same valuation, cf.~\cite[Lemma 5.6 and Lemma 5.13]{dp4}. 

\begin{remark}
\label{neutinv}
    Note that if $a\in\O\setminus Q,$ $a'\in\O$ and $q\in Q$ are such that $a=q+a',$ then $\val_{\dd}(a)=\val_{\dd}(a').$ This holds because if $a^{\dagger}$ neutralizes $a,$ then $a^{\dagger}$ neutralizes $a',$ because $a'a^{\dagger}=aa^{\dagger}+qa^{\dagger}\in R\setminus Q.$ 
\end{remark}

\begin{fact}[Cf.~{\cite[Lemma 5.17]{dp4}}]
\label{sqa}
    If $a\in\O$ and $\val(a)+\val_{\dd}(a)>0,$ then $a^{2}\in R.$
\end{fact}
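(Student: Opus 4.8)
The statement to prove is Fact~\ref{sqa}: if $a\in\O$ and $\val(a)+\val_{\dd}(a)>0$, then $a^2\in R$. The plan is to split into cases according to where $a$ lives, reducing everything to the quadratic-integrality machinery of Lemma~\ref{quad} and Corollary~\ref{intclval}, together with the characterization of $\val_\dd$ via neutralizers. First I would dispose of the trivial cases: if $a\in Q$ then $\val_\dd(a)=\infty$ and $a\in R$ already, so $a^2\in R$; and if $a\in\p\subseteq R$ then again $a^2\in R$. So the interesting case is $a\in\O\setminus Q$, in which case $\val_\dd(a)=-\val(a^\dagger)$ for a neutralizer $a^\dagger\in Q$, and the hypothesis becomes $\val(a)>\val(a^\dagger)$, i.e.\ $\val(a/a^\dagger)>0$, so $a/a^\dagger\in\M$; equivalently $a^2/(aa^\dagger)\in\M$ where $aa^\dagger\in R\setminus Q$.

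\textbf{Main line of argument.} If $a\in\O\cap R$, we are done immediately, so assume $a\in\O\setminus R$; then $a$ is wild by Fact~\ref{wildpos}, and by Corollary~\ref{intclval} there are $p,q\in R$ with $a^2+pa+q=0$ (note $\val(a)>\val(a^\dagger)\geq 0$ forces $\val(a)>0$ when $a^\dagger$ is a unit, and more generally I should check $a\in\M$; if $a$ is a unit of $\O$ then $a^{-1}\in\O$ and I can run the symmetric argument, but the hypothesis $\val(a)+\val_\dd(a)>0$ combined with $\val_\dd(a)=-\val(a^\dagger)$ and $a^\dagger\in Q\subseteq\O$ so $\val(a^\dagger)\geq 0$ gives $\val(a)>\val(a^\dagger)\geq 0$, hence $a\in\M$). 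By Lemma~\ref{quad}, $p\in\p$ and $q+\p$ is a square in $R/\p$. From $a^2=-pa-q=pa+q$ (characteristic~2), I want to show $a^2\in R$. Since $a\in\M$ and $p\in\p\subseteq\M$ we get $pa\in\M^2$, and I would like to leverage $\val(a^2)=2\val(a)$ against $\val(q)$: from $a^2+pa+q=0$ and $\val(pa)>\val(a^2)$ (because $\val(p)>0$, wait I need $\val(pa)>\val(a^2)$, i.e.\ $\val(p)>\val(a)$ — this is \emph{not} automatic), the valuations of $a^2$ and $q$ must agree unless there is cancellation. The cleaner route: work modulo the ideal $\p a^\dagger + \p$ or similar, using that $a^2/(aa^\dagger)=a/a^\dagger\in\M$. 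Since $aa^\dagger\in R$, it suffices to show $a^2\in (aa^\dagger)\cdot\O\cap$ something inside $R$; more precisely, writing $b=aa^\dagger\in R\setminus Q$ and using $a^2 = a\cdot a = (b/a^\dagger)\cdot a$... I would instead follow Johnson's original proof of \cite[Lemma 5.17]{dp4}: express $a^2$ via its quadratic relation, substitute and track the residue map $\gres$, showing $\gres(a^2 - (\text{element of }R))$ lands in the image, equivalently that $a^2$ is congruent mod $\M$-type corrections to an element of $Q$ or $R$; the precise bookkeeping uses that $\val_\dd$ measures the $\e$-part and the hypothesis says this $\e$-part is killed upon squaring.

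\textbf{Expected main obstacle.} The crux is the case $a\in\O\setminus R$ (wild $a$), where $a^2\notin R$ a priori, and one must show the hypothesis $\val(a)+\val_\dd(a)>0$ forces $a^2\in R$. The difficulty is that $\val_\dd$ is not a genuine valuation — it only satisfies weak inequalities and is defined via neutralizers — so combining $\val$ and $\val_\dd$ requires unwinding the neutralizer condition $aa^\dagger\in R\setminus Q$ very carefully and relating it back to the quadratic relation $a^2+pa+q=0$. Concretely, I expect to need: (i) from $\val(a)>\val(a^\dagger)$ deduce $a=a^\dagger\cdot u$ with $u\in\M$, hence $a^2 = (a^\dagger)^2 u^2$ with $(a^\dagger)^2\in Q\subseteq R$ and $u^2\in\O$; (ii) show $u^2\in R$ using that $u\in\M\subseteq\O$ and $u$ may again be wild, at which point one recurses or appeals to the structure of $R$ near $\M$. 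The recursion bottoms out because $\val(u)=\val(a)-\val(a^\dagger)$ can be taken minimal, or because once we are deep enough in $\M$ the quadratic relation's constant term dominates. This interplay — reducing the " $a^2\in R$ " assertion to a statement purely about $\O$, $\M$, $Q$, $\p$ and the derivation $\dd_0$ — is where the real content lies, and it is precisely what Johnson's Lemma 5.17 establishes; the claim here is that the argument is characteristic-insensitive, the only adjustments being sign changes ($-p = p$, $-q = q$) that are harmless.
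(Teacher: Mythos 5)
Your opening moves match the paper's: dispose of $a\in Q$, take a neutralizer $a^{\dagger}$, and translate the hypothesis $\val(a)+\val_{\dd}(a)>0$ into $a/a^{\dagger}\in\M$. But neither of your two attempted main lines actually closes the argument, and the gap is real. The first line applies Corollary \ref{intclval} to $a$ itself, getting $a^{2}=pa+q$; as you yourself observe, the cross term $pa$ is not controlled since $a\notin R$, and the subsequent appeal to ``tracking $\gres$'' is not an argument. The fallback (i)--(ii) writes $a=a^{\dagger}u$ with $u=a/a^{\dagger}\in\M$ and reduces the problem to showing $u^{2}\in R$; this is strictly stronger than what is needed (when $a^{\dagger}$ lies deep in $I$, $a^{2}=(a^{\dagger})^{2}u^{2}\in R$ in no way forces $u^{2}\in R$), nothing in the hypothesis controls $\val_{\dd}(u)$, and the proposed ``recursion'' has no well-founded measure --- the value group need not be discrete, so ``taking $\val(u)$ minimal'' is not available. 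Showing $u^{2}\in R$ via the quadratic relation for $u$ runs into exactly the same uncontrolled cross term $pu$, so no progress is made.

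The missing idea is the one step the paper's proof is built on: apply the quadratic relation to the wild element $u=a/a^{\dagger}\in\M$ and then \emph{clear denominators}, so that the cross term becomes a multiple of $aa^{\dagger}$, which lies in $R$ by the defining property of the neutralizer. Concretely: one first checks $u$ is wild (if it were tame, Fact \ref{wildpos} would give $u\in R$ and then $a=a^{\dagger}\cdot u\in I\cdot R\subseteq I\subseteq Q$, contradicting $a\in\O\setminus Q$ --- a wildness check your sketch also omits, though it is needed even to invoke Corollary \ref{intclval} for $u$); then Corollary \ref{intclval} gives $p,q\in R$ with $u^{2}+pu+q=0$, and multiplying by $(a^{\dagger})^{2}$ yields $a^{2}=p(aa^{\dagger})+q(a^{\dagger})^{2}$, where both $aa^{\dagger}$ and $(a^{\dagger})^{2}$ lie in $R$. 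The identity $aa^{\dagger}\in R\setminus Q$, which you quote in your opening paragraph, is precisely the lever that absorbs the cross term; your proposal never puts it to use, and without it the argument does not go through.
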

\begin{proof}
    If $a\in Q$ then $a^2\in R$ automatically. If $a\in\O\setminus Q,$ let $a^{\dagger}$ be a neutralizer of $a,$ so that $\val(a)>\val(a^{\dagger}),$ i.e., $a/a^{\dagger}\in\M.$ Also, $a/a^{\dagger}$ is wild, because otherwise $a/a^{\dagger}\in R$ and $a=a^{\dagger}\cdot(a/a^{\dagger})\in I\cdot R\subseteq I\subseteq Q,$ which is absurd. Therefore, by Corollary \ref{intclval}, there are some $p,q\in R$ such that $a^2+paa^{\dagger}+q(a^{\dagger})^{2}=0,$ i.e.~$a^{2}=p(aa^{\dagger})+q(a^{\dagger})^{2}\in R.$ 
\end{proof}

The proof of the following fact is independent of the characteristic of $K.$

\begin{fact}[Cf.~{\cite[Lemma 5.15]{dp4}}]
\label{svaloq}
    Let $a\in\O$ and let $q\in Q.$ Then $$\val_{\dd}(aq)=\begin{cases}
        \infty &\text{ if }\val_{\dd}(a)+\val(q)>0,\\
        \val_{\dd}(a)+\val(q) &\text{ otherwise.}
    \end{cases}$$
\end{fact}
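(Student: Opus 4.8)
The plan is to reduce everything to the dichotomy ``$aq\in Q \Longleftrightarrow \val_{\dd}(a)+\val(q)>0$'', and then read off the value of $\val_{\dd}(aq)$ on the complementary set by a direct neutralizer computation. Fix $a\in\O$ and $q\in Q$. If $a\in Q$ or $q=0$ the statement is trivial: then $aq\in Q$, so $\val_{\dd}(aq)=\infty$, while $\val_{\dd}(a)+\val(q)=\infty>0$ (recall that for $a\notin Q$ a neutralizer lies in $Q\subseteq\O$, so $\val_{\dd}(a)=-\val(a^{\dagger})\leq 0$, and the ``otherwise'' case is automatically the one with $a\in\O\setminus Q$ and $q\neq 0$). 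So assume $a\in\O\setminus Q$, $q\neq 0$, and fix a neutralizer $a^{\dagger}\in Q$ of $a$, so $\val_{\dd}(a)=-\val(a^{\dagger})$ with $\val(a^{\dagger})\geq 0$.

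The easy half is this: if $aq\notin Q$, then $aq\in\O\setminus Q$ admits a neutralizer $c\in Q$, and since $a(qc)=(aq)c\in R\setminus Q$ with $qc\in Q$, the element $qc$ is a neutralizer of $a$. By well-definedness of $\val_{\dd}$ (all neutralizers of $a$ share a valuation, \cite[Lemma 5.13]{dp4}) we get $\val(qc)=\val(a^{\dagger})$, hence $\val(c)=\val(a^{\dagger})-\val(q)$ and $\val_{\dd}(aq)=-\val(c)=\val_{\dd}(a)+\val(q)$; moreover $c\in\O$ forces $\val(c)\geq 0$, i.e. $\val_{\dd}(a)+\val(q)\leq 0$. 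This simultaneously proves the desired formula on $\{aq\notin Q\}$ and the implication $aq\notin Q\Rightarrow\val_{\dd}(a)+\val(q)\leq 0$.

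The substantive step is the converse: \emph{if $aq\in Q$ then $\val_{\dd}(a)+\val(q)>0$}. First $\val(q)>0$, for otherwise $q\in Q^{\times}$ (as $Q$ is local with maximal ideal $I\subseteq\M$) and then $a=(aq)q^{-1}\in Q$. The key observation is that, under the hypothesis $aq\in Q$, for \emph{every} $t\in Q$ the element $a^{\dagger}-qt$ is again a neutralizer of $a$: it lies in $Q$, and $a(a^{\dagger}-qt)=aa^{\dagger}-(aq)t$ lies in $R$ with $\dd_0\big(a(a^{\dagger}-qt)\big)=\dd_0(aa^{\dagger})\neq 0$, because $(aq)t\in Q$ is a constant of $\dd_0$. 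Therefore $\val(a^{\dagger}-qt)=\val(a^{\dagger})$ for all $t\in Q$. Taking $t=1$ and applying the ultrametric inequality forces $\val(q)\geq\val(a^{\dagger})$; and the borderline case $\val(q)=\val(a^{\dagger})$ is ruled out because then $a^{\dagger}/q\in\O$ has $\res(a^{\dagger}/q)\in k^{\times}$, and $\res$ maps $Q$ onto $k$ (since $\gres$ maps $R$ onto $k[\e]$ by Fact \ref{ingr}, and $\gres|_Q$ factors as $\res$), so choosing $t_0\in Q$ with $\res(t_0)=\res(a^{\dagger}/q)$ gives $\val(a^{\dagger}-qt_0)=\val(q)+\val(a^{\dagger}/q-t_0)>\val(q)=\val(a^{\dagger})$, contradicting the previous sentence. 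Hence $\val(q)>\val(a^{\dagger})$, i.e. $\val_{\dd}(a)+\val(q)>0$.

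Putting the two halves together: if $\val_{\dd}(a)+\val(q)>0$ then by the contrapositive of the easy half $aq\in Q$, so $\val_{\dd}(aq)=\infty$; if $\val_{\dd}(a)+\val(q)\leq 0$ then by the contrapositive of the substantive step $aq\notin Q$, so the easy half yields $\val_{\dd}(aq)=\val_{\dd}(a)+\val(q)$. I expect the only real obstacle to be the substantive step --- isolating the right family of neutralizers $a^{\dagger}-qt$ and disposing of the borderline valuation $\val(q)=\val(a^{\dagger})$ via surjectivity of the residue map onto $k$; the rest is bookkeeping with the definitions of neutralizer and of $\val_{\dd}$, together with \cite[Lemmas 5.6 and 5.13]{dp4}.
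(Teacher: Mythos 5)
Your argument is correct. Note, however, that the paper itself never proves this statement: it is imported verbatim from Johnson as \cite[Lemma 5.15]{dp4}, with the remark that Johnson's proof is characteristic-independent, so there is no in-paper proof to match. What you have produced is a legitimate self-contained verification from exactly the ingredients the paper makes available: existence of neutralizers and the fact that all neutralizers of a fixed element share a valuation (\cite[Lemmas 5.6, 5.13]{dp4}), the locality of $Q$ with maximal ideal $I\subseteq\M$, and surjectivity of $\gres$ (equivalently of $\res|_Q$ onto $k$). The two halves are sound: the reduction of the case $aq\notin Q$ to the observation that $qc$ neutralizes $a$ whenever $c$ neutralizes $aq$, and, for the case $aq\in Q$, the key device of the one-parameter family of neutralizers $a^{\dagger}-qt$ ($t\in Q$), whose constancy of valuation first forces $\val(q)\geq\val(a^{\dagger})$ via $t=1$ and then excludes equality by choosing $t_0\in Q$ with $\res(t_0)=\res(a^{\dagger}/q)$. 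Two small remarks: the preliminary step showing $\val(q)>0$ is redundant (the conclusion $\val(q)>\val(a^{\dagger})\geq 0$ already follows from the neutralizer-family argument), and you may wish to note explicitly that $a^{\dagger}-qt\neq 0$, which is immediate since $\dd_0\big(a(a^{\dagger}-qt)\big)=\dd_0(aa^{\dagger})\neq 0$; neither point affects correctness.
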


The proofs of Section 5.4 of \cite{dp4} remain true in our context: they do not depend on the characteristic of $K,$ and, in \cite[Lemma 5.20]{dp4}, the assumption that says that no valuative ball is contained in $R$ remains true whenever $\s$ is not weakly of $V^2$-type. We also highlight the following fact, which is independent of the characteristic.

\begin{fact}[Cf.~{\cite[Proposition 5.22]{dp4}}]
\label{QDENSE}
    Let $a\in\O.$ Then for any $\g\in\G$ there are some $q\in Q$ and $a'\in\O$ such that $a=q+a'$ and $v(a')>\g.$
\end{fact}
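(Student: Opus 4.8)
The plan is to reduce the statement to a density assertion and then to prove it by a bootstrapping induction on $\g$.

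\emph{Reduction.} The case $\g\le 0$ is immediate: take $q=0$ if $\g<0$, and if $\g=0$ take $q\in Q$ with $\res(q)=\res(a)$ --- possible since $\res$ is already surjective onto $k$ on $Q$ (Proposition \ref{resfield}) --- so that $a-q\in\m$. Subtracting such a $q$, and noting the case $a\in Q$ is trivial, we may assume $a\in\m\setminus Q$. For $a\in\m$ and $\g>0$, any witnessing $q$ satisfies $q=a-a'\in\m$, hence $q\in Q\cap\m=Q\cap\p=I$. Thus the Fact is equivalent to: $\m\subseteq I+\{x\in K:\val(x)>\g\}$ for every $\g\in\G$, i.e.\ the fundamental ideal $I$ is dense in the maximal ideal $\m$ of $\O$.

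\emph{The density.} The heart of the matter is to improve approximations: given $b\in\m$ one wants $\iota\in I$ with $\val(b-\iota)$ as large as prescribed, the case $b\in Q$ being trivial. This is where $\char(K)=2$ is decisive. One uses that the squaring map $x\mapsto x^{2}$ is a valuation-doubling bijection of $K$ onto $K^{2}$ with $(x+y)^{2}=x^{2}+y^{2}$; that $\gres(r^{2})=\res(r)^{2}\in k$ for $r\in R$, so $R^{2}\subseteq Q$ and $\p^{2}\subseteq I$; that Fact \ref{sqa} places $b^{2}$ --- or, for wild $b$, the product $(b^{\dagger}b)^{2}$ with $b^{\dagger}$ a neutralizer whose secondary valuation is controlled by Fact \ref{svaloq} and Remark \ref{neutinv} --- inside $R$; and that the monic quadratic of Corollary \ref{intclval} expresses $b^{2}$ through $R$-data of smaller degree. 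The assumption that no mutation of $\s$ is of weak $V^{2}$-type enters, exactly as in \cite[Lemma 5.20]{dp4}, in the guise that no valuative ball of $K$ is contained in $R$: this is what keeps $b$ from being bounded away from $I$. With the improvement step available one drives the approximation to every level $\g$, matching the relevant residue inside $I$ at each step (the $\g$-residue class of an element of $I$ may be rescaled by $\res(R^{\times})=k^{\times}$, which acts transitively on the nonzero classes).

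I expect the improvement step --- equivalently, that $I$ is ``large enough'' at every valuation --- to be the main obstacle. The monic quadratic over $R$ satisfied by a wild element $\a$ has coefficients in $\p$ whose valuations are only loosely tied to $\val(\a)$ (Lemma \ref{quad} gives only that the linear coefficient lies in $\p$ and the constant term is a square modulo $\p$), so pinning down which valuations actually occur inside $R$, and then inside $I$, needs the full force of the no-valuative-ball input together with the structure of $\O$ from Section \ref{sec3} --- that it is the integral closure of $R$ and a valuation ring with residue field $k$. Everything else --- the reductions, the induction on $\g$, and the assembly of the final approximation --- is routine bookkeeping.
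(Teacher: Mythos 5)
Your reduction (match residues to get into $\M$, then note any witnessing $q$ lies in $Q\cap\M=I$, so the claim is density of $I$ in $\M$) is fine, but the rest is not a proof: the ``improvement step'' --- given $b\in\M\setminus Q$ and $\g\in\G$, produce $\iota\in I$ with $v(b-\iota)>\g$ --- \emph{is} the entire content of the statement, and your proposal only lists tools that might bear on it and then explicitly concedes it as ``the main obstacle.'' Nothing in the sketch shows how squaring, Fact \ref{sqa}, or the quadratic of Corollary \ref{intclval} would actually force $I$ to meet every valuative ball inside $\M$; in particular Fact \ref{sqa} is only applicable when $\val(b)+\val_{\dd}(b)>0$, which is exactly the kind of control you do not have for an arbitrary $b\in\M$, and placing $b^{2}$ (or $(b^{\dagger}b)^{2}$) in $R$ says nothing by itself about approximating $b$ modulo $Q$. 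So there is a genuine gap at the one point that matters.

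There is also a directional mismatch with how the paper handles this. The statement is characteristic-independent: the paper gives no new argument but imports Johnson's proof of \cite[Proposition 5.22]{dp4} (Section 5.4 of that paper) verbatim, the only thing to verify being that its hypothesis --- no valuative ball is contained in $R$, i.e.\ \cite[Lemma 5.20]{dp4} --- still holds, which follows from the assumption that no mutation of $\s$ is of weak $V^{2}$-type. The mechanism there is the secondary valuation $\val_{\dd}$ (unboundedness below at arbitrarily high levels of $v$, plus comparison of elements modulo $Q$ via neutralizers), not any characteristic-$2$ squaring. Indeed, in the paper's development the squaring arguments ($\p^{2}\subseteq I$, Fact \ref{sqa}, Proposition \ref{o2inq}) sit \emph{downstream} of Fact \ref{QDENSE} --- Proposition \ref{o2inq} uses the density --- so building the density proof on them is both unnecessary and, as organized here, dangerously close to circular. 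To repair the proposal you would need to actually carry out the analogue of \cite[Lemmas 5.20--5.21 and Proposition 5.22]{dp4}: show from the no-weak-$V^{2}$-type hypothesis that elements of $\O\setminus Q$ of arbitrarily large valuation (and suitably small $\val_{\dd}$) exist, and then use them to correct $b$ modulo $Q$ below any prescribed $\g$.
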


The following proposition is crucial, and its proof is essentially the same as the one of \cite[Proposition 5.24]{dp4}, but with the necessary changes for our characteristic 2 setting. 

\begin{propo}[Cf.~{\cite[Proposition 5.24]{dp4}}]
\label{o2inq}
    Let $a\in\O.$ Then $a^{2}\in Q.$ 
\end{propo}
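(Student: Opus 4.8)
The plan is to show $a^2 \in Q$ for an arbitrary $a \in \O$ by exploiting the density statement of Fact \ref{QDENSE} together with the squaring phenomenon of Fact \ref{sqa}, and crucially the fact that we are in characteristic 2, so that $(q+a')^2 = q^2 + (a')^2$ with no cross term. First I would observe that it suffices to treat the case $a \in \O \setminus Q$, since $Q$ is a ring and thus closed under squaring. Fix such an $a$; it has a well-defined value $\val_{\dd}(a) \in \G$ (finite, since $a \notin Q$). The idea is to write $a = q + a'$ with $q \in Q$ and $v(a')$ very large, specifically large enough that $\val(a') + \val_{\dd}(a') > 0$. By Remark \ref{neutinv}, $\val_{\dd}(a') = \val_{\dd}(a)$ is a fixed finite element of $\G$, so choosing $\g \in \G$ with $\g > -\val_{\dd}(a)$ and invoking Fact \ref{QDENSE} with this $\g$ produces $q \in Q$ and $a' \in \O$ with $a = q + a'$ and $v(a') > \g$, hence $\val(a') + \val_{\dd}(a') > \g + \val_{\dd}(a) > 0$.

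Now apply Fact \ref{sqa} to $a'$: since $\val(a') + \val_{\dd}(a') > 0$, we get $(a')^2 \in R$. In characteristic 2 we compute
\begin{equation*}
    a^2 = (q + a')^2 = q^2 + (a')^2.
\end{equation*}
Here $q^2 \in Q$ since $Q$ is a ring, and $(a')^2 \in R$. So $a^2 = q^2 + (a')^2 \in Q + R = R$; but this only lands us in $R$, not yet in $Q$. To upgrade to membership in $Q$, I would note that $Q = \{\alpha \in R : \dd_0(\alpha) = 0\}$ is precisely the ring of constants of the derivation $\dd_0 : R \to k$, so it is enough to check $\dd_0(a^2) = 0$. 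Since $(a')^2 \in R$, we may apply the Leibniz rule: $\dd_0((a')^2) = 2 a' \dd_0(a') = 0$ in characteristic 2 (and this makes sense because $a' \in R$ or, if $a' \notin R$, one argues via the residue: in fact $\dd_0((a')^2)$ is computed through $\gres$, and $\gres((a')^2) = \gres(a')^2$, whose $\e$-coefficient is $2\,\res(a')\,\dd_0(a') = 0$). Likewise $\dd_0(q^2) = 0$ as $q \in Q$. Hence $\dd_0(a^2) = 0$, so $a^2 \in Q$.

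The subtle point — and the main obstacle — is the step "$(a')^2 \in R$ and $\dd_0((a')^2) = 0$ force $a^2 \in Q$": one must be careful that $a'$ itself need not lie in $R$ (only $(a')^2$ does), so the Leibniz computation $\dd_0((a')^2) = 2a'\dd_0(a')$ cannot be applied naively to $a'$; instead one should argue directly that any square lying in $R$ has $\gres$-image a square in $k[\e]$, and in characteristic 2 every square of $k[\e] = k[\e]/(\e^2)$ lies in $k$ (since $(x + y\e)^2 = x^2$), so its $\e$-coefficient vanishes, i.e. it lies in $Q$. This gives the cleanest route: $(a')^2 \in R$ is a square in $K$ that lies in $R$, hence $\gres((a')^2) \in k$, hence $(a')^2 \in Q$; combined with $q^2 \in Q$ and $a^2 = q^2 + (a')^2$ we conclude $a^2 \in Q$. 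I would also double-check that the characteristic-2 identity $(q+a')^2 = q^2 + (a')^2$ is the only place the hypothesis $\char K = 2$ enters, which is consistent with the remark in the excerpt that the odd-characteristic argument needed a genuinely different computation here.
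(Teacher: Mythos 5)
Your setup is fine: splitting off the trivial case $a\in Q$, using Remark \ref{neutinv} and Fact \ref{QDENSE} to write $a=q+a'$ with $\val(a')+\val_{\dd}(a')>0$, and invoking Fact \ref{sqa} to get $(a')^{2}\in R$ are all legitimate, as is the characteristic-2 identity $a^{2}=q^{2}+(a')^{2}$. The gap is the upgrade from $(a')^{2}\in R$ to $(a')^{2}\in Q$. Your justification is the claim that any element of $R$ which is a square \emph{in $K$} has $\gres$-image a square in $k[\e]$; but $\gres$ is a ring homomorphism only on $R$, and $a'$ itself need not lie in $R$ (if it did, you would not need Fact \ref{sqa} at all, since $R^{2}\subseteq Q$ in characteristic 2 is immediate from $\gres(x)^{2}=\res(x)^{2}$). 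For $a'\in\O\setminus R$ there is simply no homomorphism through which to push the square structure, so "$\gres((a')^{2})=\gres(a')^{2}$" is meaningless and the claim is unsupported. Note also that the statement you need, namely "$x\in\O$ and $x^{2}\in R$ imply $x^{2}\in Q$," is a special case of the proposition being proved, so assuming it is circular; and the natural attempt to prove it from the wildness machinery fails: $a'\in\O\setminus R$ is wild by Fact \ref{wildpos} and satisfies $T^{2}+(a')^{2}=0$ over $R$, but Lemma \ref{quad} then only tells you that $\res((a')^{2})$ is a square in $R/\p\cong k$, i.e.\ controls the constant coefficient of $\gres((a')^{2})$, not the $\e$-coefficient $\dd_0((a')^{2})$ whose vanishing is what membership in $Q$ means.

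The paper's proof is engineered precisely to avoid this point. It fixes a non-zero $u\in I$, strengthens the density requirement to $\val(a')>3\g$ (with $-\g<\val_{\dd}(a)$ and $\val(u)<\g$), and uses Fact \ref{svaloq} to control $\val_{\dd}(a'/u)$, so that Fact \ref{sqa} can be applied to $a'/u$ rather than to $a'$. This yields $(a'/u)^{2}\in R$, whence $(a')^{2}=u^{2}\cdot(a'/u)^{2}\in I\cdot R\subseteq I\subseteq Q$: membership in $Q$ comes for free from the ideal $I$, with no need to know anything about residues of squares of elements outside $R$. (Fact \ref{svaloq} also produces the side case $\val_{\dd}(a'/u)+\val(u)>0$, in which $a'\in Q$ and one is done immediately.) To repair your argument you would have to reproduce essentially this detour; as written, the final step does not go through.
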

\begin{proof}
    Let $u\in I$ be non-zero. Let $\g\in\G$ be such that $-\g<\val_{\dd}(a)$ and $\val(u)<\g.$ Use Fact \ref{QDENSE} to find some $q\in Q$ and some $a'\in\O$ such that $a=q+a'$ and $\val(a')>3\g.$ By Remark \ref{neutinv}, $-\g<\val_{\dd}(a)=\val_{\dd}(a').$ Also, 
    \begin{equation}
    \label{up}
    \val(a'/u)=\val(a')-\val(u)>3\g-\g=2\g>0    
    \end{equation} and, by Fact \ref{svaloq}, 
    \begin{equation}
    \label{5.15}
        \val_{\dd}(a')=\val_{\dd}\left(\dfrac{a'}{u}\cdot u\right)=\val_{\dd}\left(\dfrac{a'}{u}\right)+\val(u)
    \end{equation} unless $\val_{\dd}\left(\dfrac{a'}{u}\right)+\val(u)>0,$ in which case $a'\in Q,$ yielding $a=q+a'\in Q$ and $a^{2}\in Q.$ If $\val_{\dd}\left(\dfrac{a'}{u}\right)+\val(u)\leq 0,$ then equation \ref{5.15} holds, implying that 
    $$-\g<\val_{\dd}(a')=\val_{\dd}\left(\dfrac{a'}{u}\right)+\val(u)<\val_{\dd}\left(\dfrac{a'}{u}\right)+\g,$$
    so that 
    \begin{equation}
        \label{down}
        \val_{\dd}\left(\dfrac{a'}{u}\right)>-2\g.
    \end{equation}
    Adding equation \ref{up} with equation \ref{down} we get that $\val(a'/u)+\val_{\dd}(a'/u)>0.$ Then Fact \ref{sqa} implies that $(a'/u)^{2}\in R,$ so that $(a')^2\in R\cdot u^{2}\subseteq R\cdot I\subseteq I\subseteq Q.$ Finally, $a^{2}=q^2+(a')^2\in Q$ as wanted.
\end{proof}

\begin{cor}[Cf.~{\cite[Proposition 5.32]{dp4}}]
\label{perf}
    $K$ is imperfect.
\end{cor}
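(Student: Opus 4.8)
The plan is to exploit Proposition \ref{o2inq}, which says that squaring maps all of $\mathcal{O}$ into the proper subring $Q$. If $K$ were perfect, then in particular the Frobenius $x\mapsto x^2$ would be surjective on $K$, hence also on $\mathcal{O}$ in the following precise sense: since $\Frac(R)=K$ by Fact \ref{ingr} and $\mathcal{O}$ is a valuation ring of $K$ (Corollary \ref{intclval}), every $x\in\mathcal{O}$ can be written as $x=y^2$ with $y\in K$, and $\val(x)=2\val(y)$ forces $\val(y)\geq 0$ when $\val$ is $2$-divisible and $x\in\mathcal{O}$ — more carefully, $y\in\mathcal{O}$ because $\mathcal{O}$ is integrally closed and $y$ satisfies $Y^2-x=0$ over $\mathcal{O}$. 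So perfectness of $K$ would give $\mathcal{O}=\mathcal{O}^2\subseteq Q$, whence $\mathcal{O}\subseteq Q\subseteq R\subseteq\mathcal{O}$ and thus $R=\mathcal{O}=Q$. But $Q$ is a proper subring of $R$ by Remark \ref{qproper}, a contradiction. Therefore $K$ is imperfect.

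Let me lay out the steps in order. First I would observe that it suffices to show $\mathcal{O}=\mathcal{O}^2$ fails, or rather to derive a contradiction from $K=K^2$. Second, from $K=K^2$ and the integral closedness of $\mathcal{O}$, I would deduce $\mathcal{O}=\mathcal{O}^2$: given $x\in\mathcal{O}$, pick $y\in K$ with $y^2=x$; then $y$ is a root of the monic polynomial $Y^2-x\in\mathcal{O}[Y]$, so $y\in\mathcal{O}$ since $\mathcal{O}$ is integrally closed in $K$. Third, apply Proposition \ref{o2inq} to get $\mathcal{O}=\mathcal{O}^2\subseteq Q$. Fourth, chain the inclusions $Q\subseteq R\subseteq\mathcal{O}\subseteq Q$ (using that $Q$ is a subring of $R$ and $R\subseteq\mathcal{O}$ by definition of integral closure) to conclude $Q=R$. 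Fifth, invoke Remark \ref{qproper} for the contradiction.

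I do not anticipate a serious obstacle here; the only point requiring a little care is the passage $K=K^2\implies\mathcal{O}=\mathcal{O}^2$, where one must not confuse \say{$\mathcal{O}$ is integrally closed in $K$} (true, being a valuation ring) with \say{$\mathcal{O}$ is a perfect ring}, and one must genuinely use Proposition \ref{o2inq} rather than just the trivial $\mathcal{O}^2\subseteq\mathcal{O}$. A cosmetic alternative avoiding even this: since $\Frac(R)=K$, it is enough to find a single element of $K$ with no square root in $K$; taking any $u\in I\setminus\{0\}$, if $u=w^2$ with $w\in K$ then $w\in\mathcal{O}$ as above, so $w^2=u\in Q$ is automatic and gives nothing — so instead one notes that $Q\subsetneq R\subseteq\mathcal{O}$ means there is $a\in R\setminus Q$, and if $K$ were perfect then $a=b^2$ for some $b\in\mathcal{O}$, contradicting $b^2=a\notin Q$ together with $b^2\in Q$ from Proposition \ref{o2inq}. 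Either route closes the argument in a few lines.
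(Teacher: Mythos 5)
Your proof is correct and follows essentially the same route as the paper: assume $K$ perfect, deduce $\O=\O^{2}$, apply Proposition \ref{o2inq} to get $\O\subseteq Q$, and contradict $Q\subsetneq R\subseteq\O$. The only difference is that you justify the step $K=K^{2}\Rightarrow\O=\O^{2}$ via integral closedness of the valuation ring, a detail the paper asserts without comment, so this is a welcome but inessential elaboration.
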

\begin{proof}
    If $K$ is perfect, then $\O$ is perfect as well, i.e.~$\O^{2}=\O.$ By Proposition \ref{o2inq}, we get that $\O^{2}\subseteq Q,$ so $\O^{2}\subseteq Q\subsetneq R\subseteq\O,$ which is absurd. 
\end{proof}

We finish this section by summarizing Propositions 5.23, 5.30 and 5.31 of \cite{dp4}, whose proofs are independent of the characteristic. As a reminder, given a valued field $(K,\O),$ a \emph{mock $K/\M$} is a couple $(D,\phi)$ where $D$ is a divisible $\O$-module such that $x\in\O\cdot y$ or $y\in\O\cdot x$ for all $x,y\in D,$ and $\phi:k\to D$ is an $\O$-module injection, whose image is denoted as $D_0.$

\begin{fact}[Cf.~{\cite[Propositions 5.23, 5.30 and 5.31]{dp4}}]
\label{mkm}
    There is an $\O$-module structure extending the $Q$-module structure of $\O/Q,$ making it into a mock $K/\M.$
\end{fact}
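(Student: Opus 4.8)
The plan is to take $D=\O/Q$ itself, equip it with a new $\O$-action extending its tautological $Q$-module structure, and set $\phi\colon k\xrightarrow{\sim}R/Q\hookrightarrow\O/Q$; here $\gres$ identifies $R$ with $k[\e]$ and $Q$ with $k\subseteq k[\e]$, so $R/Q\cong k[\e]/k\cong k$ via $\bar a\mapsto\dd_0(a)$. Three things then have to be checked: that $\O/Q$ carries an $\O$-module structure extending the $Q$-action; that $R/Q$ is an $\O$-submodule on which $\M$ acts trivially, so that $\phi$ is an $\O$-module injection onto a $k$-line; and that the resulting module is divisible and ``uniserial'' in the sense required of a mock $K/\M$.

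To define the $\O$-action, given $a\in\O$ and a class $\bar x\in\O/Q$, I would use density (Fact \ref{QDENSE}) to write $a=q+a'$ with $q\in Q$ and $\val(a')>-\val_\dd(x)$ for a representative $x$ of $\bar x$ --- a bound depending only on $\bar x$, since all representatives have the same $\val_\dd$ by Remark \ref{neutinv} --- and put $a\cdot\bar x:=qx+Q$. This is independent of the decomposition, because two choices differ by some $q_1-q_2\in Q$ of large valuation and then $(q_1-q_2)x\in Q$ by Fact \ref{svaloq}; it is independent of the chosen representative because $Q$ is a ring; and it restricts to the original $Q$-action (take $a'=0$). The module axioms $(a+b)\bar x=a\bar x+b\bar x$, $a(\bar x+\bar y)=a\bar x+a\bar y$ and $1\cdot\bar x=\bar x$ are read off the formula, and associativity $(ab)\bar x=a(b\bar x)$ follows by expanding $ab=(q_a+a')(q_b+b')$ and noting that every term but $q_aq_b$ has large valuation, so it suffices to take all decompositions fine enough. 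None of this uses $\char(K)\neq 2$.

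For the $k$-line: if $a\in\O$ and $r\in R$ then $q_ar\in R$ (as $q_a\in Q\subseteq R$ and $R$ is a ring), so $R/Q$ is an $\O$-submodule; and if $m\in\M$ then $\val(q_m)=\val(m)>0$ while $\val_\dd(r)\geq 0$ (recall $\val_\dd\equiv 0$ on $R\setminus Q$ and $\val_\dd\equiv\infty$ on $Q$), so $q_mr\in Q$ by Fact \ref{svaloq}; that is, $\M$ annihilates $R/Q$. Hence $R/Q$ is a module over $\O/\M\cong k$, and a Leibniz computation shows it is $1$-dimensional with the residue action equal to multiplication in $k$. So $\phi$ is an $\O$-module injection onto $D_0:=R/Q$.

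The substantive point is divisibility and uniseriality. For divisibility, given $\bar x$ and $a\in\O\setminus\{0\}$ I would first replace $x$ by a representative of valuation $>\val(a)$ (again Fact \ref{QDENSE}), write $a=q_a+a'$ with $\val(q_a)=\val(a)$, so that $x/q_a\in\O$ and $a\cdot\overline{x/q_a}=\bar x$. For uniseriality, $\val_\dd$ descends to a valuation $\bar v$ on $\O/Q$ (Remark \ref{neutinv}) with $\bar v(a\bar z)=\bar v(\bar z)+\val(a)$ by Fact \ref{svaloq}; given $\bar x,\bar y\neq\bar 0$, after swapping we may assume $\bar v(\bar x)\leq\bar v(\bar y)$, choose $a\in\O$ with $\val(a)=\bar v(\bar y)-\bar v(\bar x)$, and then correct $a$ by successive approximation until $a\bar x=\bar y$ exactly. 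This last step --- promoting ``$a\bar x$ and $\bar y$ have the same $\bar v$'' to ``$a\bar x=\bar y$'' --- is the module-theoretic analogue of solving $y=ax$ in $K/\O$ and is the real obstacle; I expect it to come down to iterating Fact \ref{QDENSE} against the transformation rule of Fact \ref{svaloq}, exactly as in \cite[Propositions 5.23, 5.30 and 5.31]{dp4}, whose proofs do not use the characteristic.
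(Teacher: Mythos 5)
You should first keep in mind what the paper actually does here: it gives no proof at all, but imports \cite[Propositions 5.23, 5.30 and 5.31]{dp4} wholesale and merely records that Johnson's proofs never use the characteristic. Measured against that, most of your reconstruction is sound and is indeed the mechanism underlying the cited results: defining $a\cdot\bar{x}=\overline{qx}$ from a decomposition $a=q+a'$ with $\val(a')>-\val_{\dd}(x)$ (Fact \ref{QDENSE}), with well-definedness coming from Remark \ref{neutinv} and Fact \ref{svaloq}; identifying $D_0=R/Q\cong k$ with the residue action; and the divisibility argument. Two small repairs: in the divisibility step the decomposition of $a$ must also be admissible for the element $x/q_a$ you act on, i.e.\ $\val(a')>\val(a)-\val_{\dd}(x)$ (easily arranged, but not stated), and the transformation rule is $\bar{v}(a\bar{z})=\val(a)+\bar{v}(\bar{z})$ only when this is $\le 0$; otherwise $a\bar{z}=0$.

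The genuine gap is exactly where you place it, and it is not closed by the method you propose. ``Correcting $a$ by successive approximation until $a\bar{x}=\bar{y}$ exactly'' is not an argument: what Facts \ref{QDENSE} and \ref{svaloq} give (working through a neutralizer of $x$ and the surjectivity of $\res|_Q$ onto $k$) is only the one-step improvement that for $\val_{\dd}(x)\le\val_{\dd}(y)\le 0$ there is $c\in Q$ with either $y-cx\in Q$ or $\val_{\dd}(y-cx)>\val_{\dd}(x)$. Iterating this strictly increases the secondary valuation of the error at each step, but in this purely algebraic setting there is no completeness or saturation to pass to a limit, so the process need not terminate; all you actually obtain is that $\bar{y}$ lies in $Q\bar{x}+\{\bar{z}:\val_{\dd}(z)>\gamma\}$ for every $\gamma$, i.e.\ in the closure of $Q\bar{x}$ for the secondary valuation, which is weaker than the required membership $\bar{y}\in\O\bar{x}$. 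Promoting approximation to exact comparability is precisely the substantive content of \cite[Propositions 5.30 and 5.31]{dp4} and needs a different idea than iterated density. Since the paper itself only cites those propositions (with the observation that their proofs are characteristic-free), deferring to them is perfectly legitimate --- but then this step should be an explicit citation, not a sketch whose stated method does not suffice.
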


\section{Fields of dp-rank 2}
\label{sec5}
Following Johnson's notations, an unstable dp-finite field is called \emph{of valuation type} if its canonical topology is a $V$-topology.  The proof of the following fundamental theorem \cite[Theorem 6.13]{dp4} is independent of the characteristic. We include a proof in order to stress how dp-rank and inflators fit together. 

\begin{fact}[Cf.~{\cite[Theorem 6.13]{dp4}}]
\label{2inf}
    Let $\KK$ be a sufficiently saturated model of (the theory of) an unstable field of characteristic 2 and dp-rank at most 2. If $\KK$ is not of valuation type, then there is a 2-inflator $\s$ defined on $\Dir_\KK(\KK)$ satisfying the Strong Assumptions.
\end{fact}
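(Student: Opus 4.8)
The plan is to feed the model-theoretic hypotheses into the inflator theory of \cite{dp3}, \cite{dp5} and \cite{dp6}, reproducing the characteristic-free argument of \cite[Theorem 6.13]{dp4}. Since $\KK$ is an unstable dp-finite field it is infinite, so its canonical topology $\t$ is non-trivial; by \cite[Theorem 4.10]{dp5} and \cite[Fact 1.7]{dp6} it is moreover definable, and its weight $w$ satisfies $1\le w\le \dprk(\KK)\le 2$. As a $W_1$-topology is the same as a $V$-topology (\cite[Theorem 4.10]{dp5}), the case $w=1$ would say that $\t$ is a $V$-topology, i.e.~that $\KK$ is of valuation type, contrary to hypothesis; hence $w=2$. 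The theory of \cite{dp3} together with \cite{dp5} then attaches to the weight-$2$ topology $\t$ a $2$-inflator $\s:\Dir_\KK(\KK)\to D_{\bullet}$, and $\s$ is malleable because the inflators produced from dp-finite fields in \cite{dp3} are malleable. Thus $\s$ already satisfies the first of the Weak Assumptions.

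It remains to verify the second: that no mutation of $\s$ is of weak $V^2$-type. Here the hypothesis ``not of valuation type'' is used a second time, and this is where I expect the only genuine work to be. Suppose, toward a contradiction, that some mutation $\s'$ of $\s$ were of weak $V^2$-type in the sense of Definition \ref{v2type}, so that $R_{\s'}$ contains a non-zero ideal of a multi-valuation ring of $\KK$ with at most two maximal ideals. I would then invoke the reduction theory for inflators of \cite{dp3}, as deployed in \cite{dp4}: for an unstable dp-finite field of dp-rank $\le 2$, a weak multi-valuation-type mutation of the inflator attached to the canonical topology forces that topology to be a single $V$-topology, i.e.~it forces $\KK$ to be of valuation type --- a contradiction. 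Hence no mutation of $\s$ is of weak $V^2$-type, and $\s$ satisfies the Weak Assumptions. The delicate point is precisely this implication: one must track mutations, fundamental rings, their ideal topologies and the $V$-topological coarsenings of $\t$ carefully, and it is here that $\dprk(\KK)\le 2$ does more than merely bound the weight of $\t$.

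Finally I would promote the Weak Assumptions to the Strong Assumptions exactly as in the paragraph preceding Definition \ref{str}. Since $\s$ satisfies the Weak Assumptions it is in particular not of weak $V^2$-type, so by Remark \ref{wildex} it admits a wild element $\a\in\KK$; the mutation $\s''$ of $\s$ along the line $\KK\cdot(1,\a)$ is then isotypical by \cite[Corollary 2.3]{dp4}. Since a mutation of a malleable inflator is again malleable (cf.~\cite[Proposition 10.13]{dp3}) and a mutation of a mutation of $\s$ is again a mutation of $\s$ (cf.~\cite[Proposition 10.5]{dp3}), the inflator $\s''$ inherits the absence of weak $V^2$-type mutations and so still satisfies the Weak Assumptions. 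Being in addition isotypical, $\s''$ is a $2$-inflator on $\Dir_\KK(\KK)$ satisfying the Strong Assumptions, as required.
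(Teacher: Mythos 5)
Your overall architecture matches the paper's proof: produce a malleable $2$-inflator whose degree is forced by $\dprk(\KK)\le 2$, use the hypothesis that $\KK$ is not of valuation type to exclude both the degenerate (valuation-ring) case and any weak multi-valuation-type mutation, and finally mutate along a wild line to an isotypical inflator via \cite[Corollary 2.3]{dp4}; your last paragraph is essentially verbatim the paper's closing step, and is fine.

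The genuine gap is the middle step, which you yourself flag as ``the delicate point'' and then only gesture at. The paper does not re-derive this implication either, but it pins it to a precise statement, \cite[Corollary 6.12]{dp4}, and --- importantly --- it sets things up so that this corollary literally applies: one fixes a magic subfield $k_0\preceq\KK$, takes the lattice $\Lambda$ of type-definable $k_0$-linear subspaces of $\KK$, chooses a non-zero pedestal $G$, and obtains a malleable $m$-inflator with fundamental ring $\Stab(G)$ and $m\in\{1,2\}$ (malleability comes with this construction, it is not deduced from the topology). Then \cite[Corollary 6.12]{dp4} does both jobs at once: if $m=1$ the fundamental ring is a valuation ring (\cite[Proposition 5.32]{dp3}) and $\KK$ is of valuation type, and if $m=2$ any weakly multi-valuation-type mutation again forces valuation type. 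Your version instead attaches a $2$-inflator to the weight-$2$ canonical topology via the $W_n$-machinery of \cite{dp5}/\cite{dp6}; that is not the inflator to which the cited corollary refers, so as written your appeal to ``reduction theory as deployed in \cite{dp4}'' is a promissory note: you would either have to bridge from your topology-derived inflator back to the pedestal inflator (tracking co-embeddability of fundamental rings under mutation), or re-prove the implication ``weak multi-valuation-type mutation $\Rightarrow$ canonical topology is a $V$-topology'' in your setting. Replacing your first two steps by the pedestal construction and the explicit citation of \cite[Corollary 6.12]{dp4} closes the gap and recovers the paper's argument.
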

\begin{proof}
Let $k_0\preceq\KK$ be a magic subfield. If $\Lambda$ is the lattice of type-definable $k_0$-linear subspaces of $\KK,$ then $\Lambda$ admits a non-zero pedestal $G,$ inducing a malleable $m$-inflator $\s$ with fundamental ideal $R_\s=\Stab(G)=\{x\in\KK:xG\subseteq G\},$ where $m\in\{1,2\}.$ 
By \cite[Proposition 5.32]{dp3}, the fundamental ring of a malleable 1-inflator is a valuation ring, so $\KK$ would be of valuation type if $m=1,$ cf.~\cite[Corollary 6.12]{dp4}.  
If $m=2,$ by the same Corollary, if some mutation of $\s$ is weakly of multi-valuation type, then $\KK$ is of valuation type. Therefore no mutation of $\s$ is of weak multi-valuation type, and, in particular, no mutation of $\s$ is of weak $V^2$-type. By \cite[Corollary 2.3]{dp4}, $\s$ admits an isotypical mutation, which will satisfy the Weak Assumptions too. Hence, such a mutation satisfies the Strong Assumptions, as wanted.
\end{proof}

The following lemma is folklore. We include a proof for the sake of completeness.

\begin{lema}
\label{fdpperf}
    Fields of finite dp-rank are perfect.
\end{lema}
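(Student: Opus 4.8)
The statement is that a field $K$ of finite dp-rank is perfect, i.e.\ if $\char(K)=p>0$ then the Frobenius $x\mapsto x^p$ is surjective. The strategy is the standard one: if $K$ were imperfect, the quotient $K/K^p$ would be an infinite-dimensional $K^p$-vector space (as soon as it is nonzero, since $[K:K^p]$ is either $1$ or infinite for a field), and one extracts from this an infinite ``array'' of formulas witnessing unbounded dp-rank. More concretely, I would argue by contraposition inside a sufficiently saturated model: assume $a\in K\setminus K^p$, and build from $a$ a sequence of elements whose $p$-th powers are $K^p$-linearly independent, then use them to define an ict-pattern (independent collection of indiscernible traces) of infinite depth, contradicting $\dprk(K)<\omega$.

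First I would pass to a monster model $\KK$ and pick $a$ with $a^{1/p}\notin\KK$ (equivalently $a\notin\KK^p$); then $1,a,a^2,\dots,a^{p-1}$ together with the elements $b^p$ for $b$ ranging over a transcendence-style basis already force $[\KK:\KK^p]=\infty$. Concretely one gets an infinite sequence $c_1,c_2,\dots\in\KK$ that is $\KK^p$-linearly independent. Next, the key point: the relation ``$y$ lies in the $\KK^p$-span of $\{c_{i}^p : i\in S\}$'' is expressible (for finite $S$) by a first-order formula $\varphi_S(y;\bar z)$ with parameters the relevant $c_i$, and by linear independence these formulas behave like an infinite independent family. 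I would then build, for each $n$, mutually indiscernible sequences $(I_j)_{j<n}$ and a tuple $b$ such that $\varphi$ with parameters from $I_j$ holds of $b$ iff the index is the distinguished one in $I_j$ — an ict-pattern of depth $n$ — so $\dprk(\KK)\geq n$ for all $n$, contradiction. This is exactly the mechanism showing $\mathrm{PST}$ (perfectness) fails $\Rightarrow$ dp-rank infinite; it is essentially the argument that infinite-dimensional vector spaces over an infinite field, with a predicate for a family of linearly independent vectors, have IP / infinite dp-rank.

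Alternatively, and perhaps more cleanly for a note like this, I would reduce to a known citation: the pair $(K,K^p)$, or equivalently $K$ with the additive subgroup $K^p$ named, interprets an infinite-dimensional $\mathbb{F}_p$-vector space with ``enough'' definable structure to have infinite dp-rank; but since $K^p$ is definable in $K$ (it is the image of the definable Frobenius map — well, the image of a definable function, hence definable in $K^{\mathrm{eq}}$, and in fact $x\in K^p \Leftrightarrow \exists y\, y^p=x$ is already a first-order formula in the language of rings), any instability or IP of the pair transfers to $K$ itself. So imperfection of $K$ yields $\dprk(K)=\infty$ directly. The hard part — or rather the only part requiring care — is the combinatorial core: verifying that the formulas $\varphi_S(y;\bar z)$ encoding $\KK^p$-linear membership genuinely give arbitrarily deep ict-patterns, i.e.\ that one can choose the indiscernible sequences and the realizing tuple $b$ simultaneously. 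This is where I would either cite the standard fact that ``a vector space over an infinite field with a predicate for a basis (or for an independent set) has infinite dp-rank'' (see the treatment of dp-minimality/dp-rank of modules and pairs in the literature) or spell out a short explicit construction using that $\{c_i^p\}$ is linearly independent over $\KK^p$ and Frobenius is injective. Everything else — the $[\KK:\KK^p]\in\{1,\infty\}$ dichotomy, the definability of $\KK^p$, the passage to a saturated model — is routine.
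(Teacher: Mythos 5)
There is a genuine gap at the very first reduction. You assert that for a field $K$ of characteristic $p$ the degree $[K:K^p]$ is ``either $1$ or infinite,'' so that imperfection immediately yields an infinite-dimensional $K^p$-vector space. This is false as a field-theoretic fact: $[K:K^p]$ can be any power of $p$, e.g.\ $[\F_p(t):\F_p(t)^p]=p$, and the same holds for separably closed fields of finite imperfection degree (which are even NIP, indeed stable), so neither field theory nor passage to a monster model rules out $1<[K:K^p]<\infty$. Your sketch ``$1,a,\dots,a^{p-1}$ together with $p$-th powers of a transcendence-style basis force $[\KK:\KK^p]=\infty$'' only gives $[K:K^p]\geq p$. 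The finite-but-nontrivial case is precisely where a dp-rank argument is needed: the paper uses that Frobenius is a definable isomorphism, so $\dprk(K)=\dprk(K^p)$, while a $K^p$-basis of $K$ of size $e$ gives a definable $K^p$-linear isomorphism $K\cong(K^p)^e$, whence by additivity $\dprk(K)=e\cdot\dprk(K^p)$; together these force $e=1$ unless $\dprk(K^p)=0$, in which case $K$ is finite, hence perfect. Your proposal has no mechanism for this case.

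For the genuinely infinite-dimensional case your mechanism (building arbitrarily deep ict-patterns from formulas expressing membership in the $K^p$-span of finitely many independent elements) is plausible but is exactly the part you leave uncarried-out, deferring to a ``standard fact'' about vector spaces with a named independent set; this is heavier than necessary. The paper's route is shorter and avoids any by-hand pattern construction: for $K^p$-linearly independent $a_1,\dots,a_n$, the map $(b_1,\dots,b_n)\mapsto\sum_i b_ia_i$ is a definable injection $(K^p)^n\hookrightarrow K$, so additivity and monotonicity of dp-rank give $n\cdot\dprk(K^p)\leq\dprk(K)$, a contradiction for large $n$ since $\dprk(K^p)=\dprk(K)\geq1$. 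I would redo the proof along these lines: treat the finite and infinite values of $[K:K^p]$ separately, and in both cases let additivity of dp-rank do the combinatorial work.
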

\begin{proof}
    Let $K$ be a field of finite dp-rank and characteristic $p>0.$ Then $K\cong K^{p}$ via Frobenius, which is a definable isomorphism. Thus we have that $\dprk(K)=\dprk(K^{p}).$ If $e=[K:K^{p}]$ is finite, then $K\cong (K^{p})^{e}$ as $K^{p}$-vector spaces, and this isomorphism is also definable in the field structure. By additivity of $\dprk,$ $$\dprk(K^{p})=\dprk(K)=\dprk((K^{p})^{e})=e\cdot\dprk(K^{p}),$$ so either $e=1,$ i.e.~$K=K^{p},$ or $\dprk(K^{p})=0,$ in which case $K^{p}$ and $K$ are both finite, hence equal. Now we have to rule out the possibility of $e$ being infinite. Suppose that this is the case, let $n<\omega$ and let $a_1,\.,a_n\in K$ be $K^{p}$-linearly independent. Then the map $(K^{p})^{n}\to K$ given by $(b_1,\.,b_n)\mapsto\sum_{i=1}^{n}b_i\cdot a_i$ is a definable injective homomorphism of $K^{p}$-vector spaces, implying by additivity that $n\cdot\dprk(K^{p})=\dprk((K^{p})^{n})\leq \dprk(K).$ Since $\dprk(K)$ is finite, we reach a contradiction by choosing $n<\omega$ such that $n\cdot\dprk(K^{p})>\dprk(K).$   
\end{proof}

\begin{propo}[Cf.~{\cite[Proposition 6.28]{dp4}}]
\label{char0}
    Let $\KK$ be a monster model of an unstable field with $\dprk(\KK)\leq 2.$ If $\KK$ is not of valuation type, then $\char(\KK)=0.$ 
\end{propo}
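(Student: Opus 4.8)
The plan is to argue by contradiction. Assume $\KK$ is not of valuation type; we must show that $\char(\KK)=0$, so suppose toward a contradiction that $\char(\KK)=p>0$. Being a monster model, $\KK$ is sufficiently saturated, and being a model of an unstable field of dp-rank at most $2$, it falls under Fact \ref{2inf} (whose proof is characteristic-independent). Hence there is a $2$-inflator $\s$ on $\Dir_\KK(\KK)$ satisfying the Strong Assumptions.

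The idea now is to feed $\s$ into the associated-valuation analysis and contradict Lemma \ref{fdpperf}. If $p=2$, Sections \ref{sec3} and \ref{sec4} apply to $\s$: letting $R=R_\s$ and $\O$ its integral closure in $\KK$ (a valuation ring, by Corollary \ref{intclval}), Proposition \ref{o2inq} together with Remark \ref{qproper} gives $\O^{2}\subseteq Q\subsetneq R\subseteq\O$, so Corollary \ref{perf} shows that $\KK$ is imperfect. If instead $p>2$, the parallel development of Sections 4 and 5 of \cite{dp4} applies to $\s$ in exactly the same way, with \cite[Proposition 5.32]{dp4} in the role of Corollary \ref{perf}, again giving that $\KK$ is imperfect; alternatively one may short-circuit this and invoke \cite[Proposition 6.28]{dp4} (or \cite[Theorem 6.29]{dp4}) directly. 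In either characteristic, $\KK$ has finite dp-rank, so Lemma \ref{fdpperf} forces $\KK$ to be perfect --- contradicting the previous sentence. Therefore $\char(\KK)=0$.

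The proposition is thus a short assembly, so the real obstacle lives entirely in the preceding sections: above all Proposition \ref{o2inq} (equivalently \cite[Proposition 5.24]{dp4} in odd characteristic), which in turn rests on the $Q$-density statement Fact \ref{QDENSE}, the quadratic-integrality results of Section \ref{sec3}, and the secondary valuation $\val_{\dd}$; and the pedestal-and-mutation construction underlying Fact \ref{2inf}. Granting these, the only things to verify in the present proof are bookkeeping: that a monster model is sufficiently saturated (so Fact \ref{2inf} applies), that $\KK$ itself counts as an unstable field, and that the hypothesis of finite dp-rank in Lemma \ref{fdpperf} covers dp-rank at most $2$ --- all immediate.
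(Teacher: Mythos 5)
Your proposal is correct and follows essentially the same route as the paper: rule out $p=2$ via Fact \ref{2inf}, Corollary \ref{perf} and Lemma \ref{fdpperf}, and dispose of odd $p$ by appealing to Johnson (the paper simply quotes \cite[Proposition 6.28]{dp4} first to get $\char(\KK)\in\{0,2\}$, which is the short-circuit you mention as an alternative). The extra option of re-running the imperfection argument of \cite{dp4} in odd characteristic is a presentational difference only, not a different proof.
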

\begin{proof}
    By \cite[Proposition 6.28]{dp4}, $\char(\KK)$ is either 0 or 2. It cannot be 2, because in this case, by Fact \ref{2inf}, there is a 2-inflator on $\KK$ satisfying the Strong Assumptions. By Corollary \ref{perf}, $\KK$ is imperfect. But then $\KK$ cannot have finite dp-rank, by Lemma \ref{fdpperf}.
\end{proof}

\begin{teorema}[Cf.~{\cite[Theorem 6.29]{dp4}}]
\label{teo1}
    If $K$ is a field of positive characteristic and of dp-rank at most 2, then $K$ is either stable or of valuation type.
\end{teorema}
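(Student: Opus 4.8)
The plan is to read off the theorem directly from Proposition \ref{char0} via its contrapositive, after passing to a monster model. The point is that every notion appearing in the statement --- instability, dp-rank, characteristic, and being of valuation type --- is an invariant of the complete theory $\Th(K)$, so it suffices to argue at the level of a sufficiently saturated and homogeneous model.

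Concretely, first I would fix a monster model $\KK \succeq K$. Then $\KK$ is unstable if and only if $K$ is, one has $\dprk(\KK) = \dprk(K) \leq 2$, and $\char(\KK) = \char(K) > 0$; moreover ``of valuation type'' is defined through the canonical topology of a monster model, so $K$ is of valuation type precisely when $\KK$ is. We may assume $K$ is not stable, since otherwise there is nothing to prove; note in passing that this forces $\dprk(K) \in \{1,2\}$, because a field of dp-rank $0$ is finite and hence stable, so Proposition \ref{char0} is indeed applicable.

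Now suppose toward a contradiction that $\KK$ is not of valuation type. Since $\KK$ is an unstable field with $\dprk(\KK) \leq 2$, Proposition \ref{char0} yields $\char(\KK) = 0$, contradicting $\char(\KK) = \char(K) > 0$. Hence $\KK$ is of valuation type, and therefore so is $K$.

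I do not expect any genuine obstacle here: all of the substantive work has already been carried out, culminating in Proposition \ref{char0}, which itself rests on Fact \ref{2inf} (a non-valuation-type unstable field of characteristic $2$ and dp-rank at most $2$ produces a $2$-inflator satisfying the Strong Assumptions), Corollary \ref{perf} (such a field must be imperfect), and Lemma \ref{fdpperf} (fields of finite dp-rank are perfect). The only matters deserving a sentence of justification are the transfer of the relevant properties between $K$ and $\KK$ and the elementary remark that dp-rank $0$ forces stability, so that the unstable case genuinely lands in the range covered by Proposition \ref{char0}.
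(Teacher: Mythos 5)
Your argument is essentially the paper's own: after passing to a monster model $\KK$, the theorem is the contrapositive of Proposition \ref{char0}, whose proof already packages Johnson's odd-characteristic result (via \cite[Proposition 6.28]{dp4}) together with the characteristic-$2$ analysis (Fact \ref{2inf}, Corollary \ref{perf}, Lemma \ref{fdpperf}). The only difference is cosmetic: the paper splits into odd characteristic (citing \cite[Theorem 6.29]{dp4} directly) and characteristic $2$ (via Proposition \ref{char0}), whereas you run everything through Proposition \ref{char0}; since that proposition is stated for arbitrary characteristic, this is fine. The one place where your justification is off is the descent step. In this paper ``of valuation type'' is \emph{not} defined through a monster model: an unstable dp-finite field is of valuation type if \emph{its own} canonical topology is a $V$-topology, so the equivalence ``$K$ of valuation type iff $\KK$ of valuation type'' is not true by definition and the direction you actually need ($\KK$ of valuation type $\Rightarrow$ $K$ of valuation type) requires an argument. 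The paper supplies it in one sentence: the canonical topology of $\KK$ is a \emph{definable} $V$-topology, its restriction to $K$ (given by the same formulas) is the canonical topology of $K$, and by definability being a $V$-topology transfers to $K$. You correctly flagged the transfer as the point needing a sentence, but the sentence you gave (``defined through the canonical topology of a monster model'') should be replaced by this definability argument; with that substitution the proof is complete.
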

\begin{proof}
    The result holds if the characteristic of $K$ is odd by \cite[Theorem 6.29]{dp4}. If $\char(K)=2,$ then $K$ is of valuation type by Proposition \ref{char0}. Indeed, if $\KK$ is a sufficiently saturated extension of $K,$ then the canonical topology on $\KK$ is a definable $V$-topology, whose restriction to $K$ corresponds to the canonical topology of $K,$ which, by definability, is again a $V$-topology.  
\end{proof}

\section{2-inflators and \texorpdfstring{$W_2$}{}-topologies in Positive Characteristic}
\label{sec6}

All fields and rings mentioned in this section are assumed to be of positive characteristic, unless stated otherwise.

\subsection{Pre-Diffeo-Valued Fields}

Given a valued field $(K,\O)$, if $(D,\phi)$ is a mock $K/\M$, then \cite[Corollary 8.4]{dp4} shows that there is a secondary valuation map $\val:D\to\G\cup\{\infty\}$ such that 
\begin{enumerate}[wide]
\item $\val(x)\leq 0$ or $\val(x)=\infty$ for any $x\in D,$
            
\label{1}\item $\val(x)=\infty$ if and only if $x=0,$

\label{2}\item $\val(x)\geq0$ if and only if $x\in\img(\phi),$
    
\label{3}\item For any $a\in\O$ and any $x\in D,$ $$\val(ax)=\begin{cases}\infty &\text{ if }v(a)+\val(x)>0,\\
v(a)+\val(x) &\text{ if }v(a)+\val(x)\leq 0,
\end{cases}$$
            
\label{4}\item $\val(x+y)\geq\min\{\val(x),\val(y)\}$ for any $x,y\in D,$ 
            
\label{5}\item $y\in\O\cdot x$ whenever $\val(y)\geq\val(x),$ and $y\in\M\cdot x$ whenever $\val(y)>\val(x).$

\label{6}\item For any $\g\in\G$ there is some $x\in D$ such that $\val(x)\leq\g.$

\label{7}
\end{enumerate}   
These properties hold because, following \cite[Proposition 8.2]{dp4}, for any sufficiently resplendent elementary extension $(K^{*},\O^{*},D^{*})$ of $(K,\O,D),$ there is an isomorphism of $\O^{*}$-modules $D^{*}\cong K^{*}/\M^{*},$ on which we can define 
\begin{equation}
\label{valet}
    \val^{*}(x+\M^{*})=\begin{cases}
    \infty & \text{ if }x\in\M^{*},\\
    v^{*}(x) & \text{ if }x\not\in\M^{*}.
\end{cases}
\end{equation}
If we expand the language with a function symbol $\val,$ then $(K^{*},\O^{*},D^{*},\val^{*})$ satisfies the following first-order properties:
\begin{itemize}[wide]
            \item For every $x\in D\setminus\{0\}$ there is some $a\in\O$ such that $ax\in D_0\setminus\{0\},$
            \item For any $a,b\in\O,$ if $ax,bx\in D_0\setminus\{0\},$ then $v(a)=v(b),$
            \item For all $x\in D,$ $$\val(x)=\begin{cases}\infty &\text{ if }x=0,\\-v(a) &\text{ if }ax\in D_0\setminus\{0\},\end{cases}$$
\end{itemize}
out of which Properties 1 to 7 follow.        
Therefore, by resplendency, there is a realization of $\val$ in $(K,\O,D)$ satisfying the same properties. Note that the second part of Property \ref{6} is not in the original list of properties of \cite[Corollary 8.4]{dp4}, so it needs to be checked. Consider a sufficiently resplendent elementary extension $(K^{*},\O^{*},D^{*},\val^{*})$ of $(K,\O,D,\val),$ and assume that $D^{*}=K^{*}/\M^{*}$ and that $\val^{*}$ is defined as in equation \ref{valet}. Then, if $\val^{*}(x+\M^{*})<\val^{*}(y+\M)$ we have that $\val^{*}(x+\M^{*})$ cannot be $\infty,$ so $\val^{*}(x+\M^{*})=v^{*}(x)$ and $x\not\in\M^{*}$ i.e.~$1/x\in\O^{*}.$ If $y\in\M^{*},$ then $y/x\in\M^{*}$ and $y=(y/x)\cdot x,$ so the conclusion follows. If $y\not\in\M^{*},$ then $\val^{*}(y+\M^{*})=v^{*}(y)>v^{*}(x),$ so $y/x\in\M^{*}$ and again the conclusion follows. Therefore the sentence 
$\A x,y\in D\,\big(\val(y)>\val(x)\to\E a\in\M\,(y=ax)\big)$ 
holds in $(K^{*},\O^{*},D^{*},\val^{*})$ and thus in $(K,\O,D,\val).$ 

\begin{defin}
\label{pddvf}
A \emph{pre-diffeo-valued field} is the data of:
\begin{enumerate}
    \item A valued field $(K,\O),$

    \item A mock $K/\M$ $(D,\phi),$

    \item A subring $R\subseteq\O$ and a surjective derivation $\dd:R\to k.$ If we denote $Q=\ker\dd,$ then $\O,R,D$ and $k$ are $Q$-modules in a natural way.
    
    \item A surjective $Q$-module morphism $\pi:\O\to D$ such that $\ker\pi=Q,$    
\end{enumerate}
 such that the diagram   
\begin{center}
    \begin{tikzcd}
\O \arrow[rr, "\pi"]                 &  & D                     \\
                                     &  &                       \\
R \arrow[uu, hook] \arrow[rr, "\dd"] &  & k \arrow[uu, "\phi"']
\end{tikzcd}
\end{center}
commutes. If $\char(K)$ is odd, we set $\pi$ to be a derivation, and if $\char(K)=2,$ we ask that $\O^{2}=\{a^{2}:a\in\O\}\subseteq Q.$ 
\end{defin}

 Here we endow $k$ with its natural $\O$-module structure, so that Leibniz Rule for $\dd$ in $k$ makes sense: if $r\in R$ and $x\in k,$ then $r\cdot x=\res(r)x.$ Note that pre-diffeo-valued fields are first-order axiomatizable, and their theory is consistent: if $(K,D,\d)$ is a diffeo-valued field, then $\pi=\d,$ $R=\{x\in\O:\val(\d(x))\geq 0\}$ and $\dd=\d|_{R}$ make $(K,D,\d)$ into a pre-diffeo-valued field. More precisely, if $\phi:k\to D$ is the $\O$-module injection, then $\img(\phi)=\img(\d|_R),$ so $\dd:=\phi^{-1}\circ\d|_R:R\to k$ is a well defined surjective derivation with kernel $\ker(\d)\cap R=\ker(\d)=\{x\in\O:\val(\d(x))>0\}.$
 
\begin{lema}
\label{ryq}
    The map $v_\pi:\O\to\G\cup\{\infty\}$ defined by $v_\pi(x)=\val(\pi(x))$ satisfies that $R=\{x\in\O:v_\pi(x)\geq0\}$ and $Q=\{x\in\O:v_\pi(x)>0\}.$  
\end{lema}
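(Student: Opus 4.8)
The plan is to read both equalities straight off Properties \ref{1}--\ref{3} of the secondary valuation $\val$, using the commutativity of the square in Definition \ref{pddvf} and the surjectivity of $\dd$. The only two structural facts that matter are: by commutativity, $\pi$ maps $R$ into $\img(\phi)$, since $\pi(r)=\phi(\dd(r))$ for every $r\in R$; and, by Property \ref{1}, a value of $\val$ is either $\leq 0$ or equal to $\infty$, so that being $>0$ already forces it to be $\infty$. Everything else is bookkeeping.

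First I would prove $R=\{x\in\O:v_\pi(x)\geq 0\}$. For the inclusion $\subseteq$, if $x\in R$ then $\pi(x)=\phi(\dd(x))\in\img(\phi)$, whence $v_\pi(x)=\val(\pi(x))\geq 0$ by Property \ref{3}. For the reverse inclusion, suppose $x\in\O$ with $\val(\pi(x))\geq 0$; then $\pi(x)\in\img(\phi)$ by Property \ref{3}, so $\pi(x)=\phi(c)$ for some $c\in k$, and since $\dd$ is surjective there is $r\in R$ with $\dd(r)=c$, giving $\pi(r)=\phi(\dd(r))=\phi(c)=\pi(x)$. Since $\pi$ is additive, $x-r\in\ker\pi=Q$, and as $Q\subseteq R$ and $R$ is a ring we conclude $x=r+(x-r)\in R$.

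Next I would prove $Q=\{x\in\O:v_\pi(x)>0\}$. For $\subseteq$, any $x\in Q=\ker\pi$ has $\pi(x)=0$, so $v_\pi(x)=\val(0)=\infty>0$. For the reverse inclusion, if $x\in\O$ with $v_\pi(x)=\val(\pi(x))>0$, then Property \ref{1} excludes $\val(\pi(x))\leq 0$, so $\val(\pi(x))=\infty$, and Property \ref{2} yields $\pi(x)=0$, i.e.~$x\in\ker\pi=Q$. I do not foresee any genuine obstacle here: the statement is a direct unwinding of the axioms of a pre-diffeo-valued field, and the only points that need slight care are the conventions $\infty>0$ and $\infty\geq 0$ (for the value $\val(0)$) together with the inclusion $Q\subseteq R$, which is what makes $R+Q=R$ in the argument above. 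In particular the characteristic-$2$ requirement $\O^{2}\subseteq Q$ imposed in Definition \ref{pddvf} plays no role in this lemma.
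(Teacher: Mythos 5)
Your proposal is correct and follows essentially the same route as the paper's proof: for the first equality you use Property \ref{3} together with surjectivity of $\dd$ and $\ker\pi=Q\subseteq R$, and for the second you chain Properties \ref{2} and \ref{1}, exactly as in the paper. No gaps.
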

\begin{proof}
    First, if $x\in R,$ then $\dd(x)\in k$ and $\pi(x)\phi(\dd(x))\in\img(\phi).$ By Property \ref{3}, $v_\pi(x)=\val(\pi(x))\geq0.$ Also, if $v_\pi(x)\geq0$ for some $x\in\O,$ then, by the same property, $\pi(x)\in\img(\phi),$ so there is some $z\in k$ with $\pi(x)=\phi(z).$ As $\dd$ is onto, there is some $y\in R$ such that $\dd(y)=z.$ It follows that $\pi(x)=\phi(\dd(y))=\pi(y),$ so $x-y\in Q\subseteq R.$ As $y\in R,$ it follows that $x\in R$ too. Second, if $x\in\O,$ then Properties \ref{1} and \ref{2} yield that
    $$x\in Q\Sii\pi(x)=0\Sii\val(\pi(x))=\infty\Sii\val(\pi(x))>0,$$ as wanted.
\end{proof}

In this way, if $\char(K)$ is odd, pre-diffeo-valuation data coincides with the diffeo-valuation data of Section 8.4 of \cite{dp4}.

\begin{lema}
\label{k0alg}
    Let $k_0=\F_p^{alg}\cap K,$ where $p=\char(K).$ Then $Q$ is a $k_0$-algebra.
\end{lema}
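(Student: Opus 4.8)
The plan is to show that $Q=\ker\dd$ is closed under the $k_0$-action coming from $K$, i.e.~that $k_0\cdot Q\subseteq Q$, where $k_0=\F_p^{alg}\cap K$. Since $Q$ is already a subring of $\O$, it suffices to prove that $k_0\subseteq Q$; then $k_0\cdot Q\subseteq Q\cdot Q\subseteq Q$ automatically and $Q$ becomes a $k_0$-algebra. So the real content is the inclusion $k_0\subseteq Q$.

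First I would argue that $k_0\subseteq R$. Every element $a\in k_0$ is algebraic over the prime field $\F_p$, hence over $\Q\cap K=\F_p$ as well; in particular $a$ is a root of unity or zero, so $a$ is integral over $\Z\cdot 1\subseteq R$ and over the prime field, which lies in every subring of $\O$. More precisely, $k_0$ is a field, it is the algebraic closure of $\F_p$ inside $K$, and $\F_p\subseteq R$ since $R$ is a ring of characteristic $p$ containing $1$; moreover each $a\in k_0$ satisfies $a^{q}=a$ for $q=|\F_p(a)|$, so $a$ is a unit in the finite field $\F_p(a)\subseteq K$ and lies in the integral closure of $R$ inside $K$, which is the valuation ring $\O$ by Corollary~\ref{intclval}. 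In fact $\F_p(a)$ is a finite field, hence all of its nonzero elements are roots of unity, so they all have valuation $0$ and lie in $\O^{\times}$. Being integral over $R$ and algebraic over the prime field, and using that $R$ is the tame locus within $\O$ (elements of $\O$ tame with respect to $\s$ are exactly those in $R$, by Fact~\ref{wildpos}), one checks that elements of $k_0$ are tame: their tame locus $S_\a$ consists of elements of the finite field $\F_p(\a)$, which are all in $\O^{\times}\subseteq\O$, and finitely many of them must lie in $R$; but since $k_0$ is a field and $R$ is a $k_0$-subalgebra in the ambient setup (or directly since $\F_p(\a)$ is finite and $R\cap\F_p(\a)$ is a subring of the field $\F_p(\a)$ containing $1$, hence all of $\F_p(\a)$), we get $k_0\subseteq R$.

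Next I would show $\dd$ vanishes on $k_0$. The derivation $\dd:R\to k$ is additive and satisfies the Leibniz rule $\dd(xy)=\res(x)\dd(y)+\res(y)\dd(x)$, with $k$ carrying its natural $\O$-module structure. For $a\in k_0$ we have $a^{q}=a$ for some power $q=p^{r}$, so $\res(a)^{q}=\res(a)$ too. Differentiating $a^{q}=a$ and using that in characteristic $p$ the derivative of a $p$-th power is zero (so $\dd(b^{p})=0$ for any $b\in R$), we get $\dd(a)=\dd(a^{q})=\dd((a^{p^{r-1}})^{p})=0$. Hence $a\in\ker\dd=Q$. This proves $k_0\subseteq Q$, and therefore $Q$ is a $k_0$-subalgebra of $\O$, as claimed.

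The main obstacle is purely bookkeeping: one must be careful that $\dd$ is only defined on $R$, so before differentiating $a^{q}=a$ we genuinely need $a\in R$, which is why the first step ($k_0\subseteq R$) must precede the second. A secondary subtlety is the characteristic-$p$ vanishing of $\dd$ on $p$-th powers: this is immediate from Leibniz, $\dd(b^{p})=p\,\res(b)^{p-1}\dd(b)=0$, but it is worth stating explicitly since it is exactly what makes the argument go through in positive characteristic. No deep input beyond Corollary~\ref{intclval} and Fact~\ref{wildpos} is needed; the lemma is essentially a sanity check that the auxiliary field $k_0$ sits inside the constants, as it must for the $k_0$-linear inflator formalism to be consistent.
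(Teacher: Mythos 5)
There is a genuine gap, and it sits exactly where you locate the ``real content'': the inclusion $k_0\subseteq R$. Lemma \ref{k0alg} lives in Section \ref{sec6}, where $\O$, $R$, $\dd$, $Q=\ker\dd$ and $\pi$ are the data of an abstract pre-diffeo-valued field (Definition \ref{pddvf}); there is no inflator $\s$ in sight, so Corollary \ref{intclval} (integral closure of the fundamental ring of a $2$-inflator satisfying the Strong Assumptions) and Fact \ref{wildpos} (tame $=$ in $R$) simply do not apply: in this setting $\O$ is given data, it is not asserted to be the integral closure of $R$, and ``tame/wild'' is not defined. Your two fallback arguments do not close the hole either: ``$R$ is a $k_0$-subalgebra in the ambient setup'' is essentially what is being proved, hence circular; and ``$R\cap\F_p(\a)$ is a subring of the field $\F_p(\a)$ containing $1$, hence all of $\F_p(\a)$'' is a non-sequitur --- a unital subring of a finite field is a subfield, but nothing forces it to contain $\a$. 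Moreover, in this setting there is no independent route to $k_0\subseteq R$: by Lemma \ref{ryq}, $R=\{x\in\O:\val(\pi(x))\geq0\}$ and $Q=\{x\in\O:\val(\pi(x))>0\}$, so any argument putting $\a\in k_0$ into $R$ must control $\pi(\a)$, which is exactly where the characteristic-sensitive hypotheses on $\pi$ enter and which already lands you in $Q$.

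The parts of your proposal that are sound are the first and last steps. The observation that nonzero elements of $k_0$ are roots of unity, hence of valuation $0$, so $k_0\subseteq\O$, is precisely the paper's first step (phrased there as $v(q)=p^nv(q)$ and torsion-freeness of $\G$). And your Leibniz computation $\dd(a)=\dd\bigl((a^{p^{r-1}})^{p}\bigr)=0$ is correct \emph{once} $a\in R$ is known, and would indeed treat both characteristics uniformly. But the paper gets $k_0\subseteq Q$ directly from the defining properties of $\pi$: in odd characteristic $\pi$ is a derivation, so $\pi(q)=\pi(q^{p^n})=0$ and $q\in\ker\pi=Q$; in characteristic $2$, $q=q^{2^n}\in\O^{2}\subseteq Q$ by the requirement built into Definition \ref{pddvf}. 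Your proof never uses either of these two hypotheses, which is a sign that the missing step cannot be repaired without reorganizing the argument along those lines.
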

\begin{proof}
    Let $q\in k_0,$ so that $q$ is algebraic over $\F_p.$ There is some $n\geq1$ such that $q^{p^{n}}=q.$ Therefore $v(q)=v(q^{p^{n}})=p^{n}v(q),$ so $v(x)$ is either $0$ or $\infty,$ because $\G$ is torsionless. Therefore $k_0\subseteq\O,$ so $\O$ is a $k_0$-algebra. If $p$ is odd, $\pi$ is a derivation and $\pi(q)=\pi(q^{p^n})=0,$ i.e.~$q\in Q.$ If $p=2,$ then $q=q^{p^{n}}=q^{2^{n}}\in\O^{2}\subseteq Q$ as well.  
\end{proof}

\begin{defin}
We say that a pre-diffeo-valued field as in Definition \ref{pddvf} is \emph{dense} if $Q$ is dense in $\O$ with respect to the valuation topology, i.e.~for any $a\in\O$ and any $\g\in\G$ there is some $q\in Q$ such that $a=q+r$ and $v(r)>\g.$ We also say that the pre-diffeo-valuation data is dense. 
\end{defin}

This property is also first-order axiomatizable. As above, any dense diffeo-valued field is again a dense pre-diffeo-valued field.

\begin{remark}[Cf.~{\cite[Remark 8.13]{dp4}}]
\label{fibdens}
    If the pre-diffeo-valuation data is dense, then all fibers of $\dd$ are dense in $\O,$ and $\O$ is not trivial.
\end{remark}
\begin{proof}
First, we know that $Q,$ which is the fiber of $0$ under $\dd,$ is dense in $\O.$ If $y\in k,$ then there is some $r\in R$ such that $\dd(r)=y$ because $\dd$ is surjective. Then the fiber of $y$ under $\dd$ is exactly $r+ Q.$ Indeed, $\dd(r+q)=\dd(r)=y$ for any $q\in Q$ and if $s\in R$ is such that $\dd(s)=\dd(r)=y,$ then $s\in r+Q$ for $s-r\in\ker(\dd)=Q.$ Now, if $Q$ is dense in $\O,$ then $r+Q$ is dense in $r+\O=\O.$
Second, if $\O=K,$ then $\M=\{0\}$ and the valuation topology is discrete. If $Q$ is dense in $\O$ with respect to the discrete topology, then $Q=R=\O=K,$ implying that $\dd:R\to k$ is trivial, hence not surjective.
\end{proof}

For the rest of this section, let $(K,\O,D,R,\phi,\dd,\pi)$ be a fix dense pre-diffeo-valued field.
 
\begin{lema}[Cf.~{\cite[Lemma 8.14]{dp4}}]
\begin{enumerate}[wide]
\label{top}
    \item $R$ and $Q$ are proper subrings of $K.$

    \item $I:=Q\cap\M$ is a proper ideal of $R$ and $Q.$

    \item $\Frac(Q)=\Frac(R)=K.$

    \item $Q$ is a local ring with maximal ideal $I.$

    \item $I\neq 0.$
\end{enumerate}
\end{lema}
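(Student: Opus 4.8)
The plan is to establish the five items essentially in the order (5), (1), (2), (3), (4), since they feed into one another, and to run everything through Lemma~\ref{ryq}, which exhibits $R=\{x\in\O:v_\pi(x)\ge0\}$ and $Q=\{x\in\O:v_\pi(x)>0\}$ for $v_\pi(x):=\val(\pi(x))$, together with one preliminary observation that I would record first: since $\pi$ is $Q$-linear and $D$ carries the valuation map of its mock structure, for every $q\in Q$ and $x\in\O$ one has $\pi(qx)=q\cdot\pi(x)$, so by Property~\ref{4} we get $v_\pi(qx)=v(q)+v_\pi(x)$ whenever this is $\le0$, and $v_\pi(qx)=\infty$ otherwise.

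For item (5): by Remark~\ref{fibdens} the valuation is non-trivial, so $\M\neq0$; given $0\neq m\in\M$, density applied with $\g=v(m)$ produces $m=q+r$ with $q\in Q$ and $v(r)>v(m)$, whence $v(q)=v(m)>0$ and $0\neq q\in Q\cap\M=I$. Running this for every $m\in\M$ shows moreover that $v$ maps $I\setminus\{0\}$ onto the set of strictly positive elements of $\G$, a refinement I will reuse in (3). Item (1) is then immediate, since $Q\subseteq R\subseteq\O\subsetneq K$ by non-triviality. For item (2), $I=Q\cap\M$ is the intersection of the subring $Q$ with the ideal $\M$ of $\O$, hence an ideal of $Q$; and it is an ideal of $R$ because for $r\in R$ and $i\in I$ the Leibniz rule gives $\dd(ri)=\res(i)\dd(r)+\res(r)\dd(i)=0$ (as $\res(i)=0$ since $i\in\M$, and $\dd(i)=0$ since $i\in Q=\ker\dd$), so $ri\in\ker\dd=Q$, while $ri\in\M$ because $\M$ is an ideal of $\O$; thus $ri\in I$. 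Properness of $I$ in both rings holds because $1\notin\M\supseteq I$.

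Item (3) is the substantial part. First I would show $R\subseteq\Frac(Q)$: fixing any $0\neq i\in I$, the Leibniz computation of (2) gives $ia\in Q$ for all $a\in R$, so $a=(ia)/i\in\Frac(Q)$. Then I would show $\O\subseteq\Frac(Q)$: given $a\in\O\setminus R$, Lemma~\ref{ryq} yields $v_\pi(a)<0$ in $\G$, so $-v_\pi(a)$ is strictly positive and, by the refinement of (5), there is $i\in I$ with $v(i)=-v_\pi(a)$; the formula from the first paragraph then gives $v_\pi(ia)=v(i)+v_\pi(a)=0\ge0$, so $ia\in R\subseteq\Frac(Q)$ and hence $a=(ia)/i\in\Frac(Q)$ (using $i\in Q$ again). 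Therefore $\O\subseteq\Frac(Q)$, and as $\O$ is a valuation ring of $K$ we conclude $K=\Frac(\O)\subseteq\Frac(Q)\subseteq\Frac(R)\subseteq K$, so all three fraction fields equal $K$.

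For item (4), let $q\in Q\setminus I$; then $q\in\O$ and $q\notin\M$, so $v(q)=0$, $q\in\O^\times$ and $q^{-1}\in\O$. Applying the formula from the first paragraph with the $Q$-element $q$ and $x=q^{-1}$ gives $v_\pi(qq^{-1})=v(q)+v_\pi(q^{-1})=v_\pi(q^{-1})$ if this is $\le0$, and $\infty$ otherwise; but $v_\pi(qq^{-1})=v_\pi(1)=\infty$ since $\pi(1)=0$, so the first case cannot occur and $v_\pi(q^{-1})>0$, i.e. $q^{-1}\in Q$. Thus $Q\setminus I$ consists of units of $Q$, and since $I$ is a proper ideal of $Q$ by (2), $Q$ is local with maximal ideal $I$. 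I expect item (3) to be the crux: the difficulty is to obtain the full equality $\Frac(Q)=K$ rather than just the easy inclusion $K^{p}\subseteq\Frac(Q)$ coming from $\O^{p}\subseteq Q$, and the point that makes it go through is that the ``$v_\pi$-pole'' of an element of $\O\setminus R$ can be cancelled by multiplying with an element of $I$ of exactly the right valuation --- which density makes possible precisely because it forces $I$ to realise every positive valuation. The $Q$-linearity of $\pi$ is likewise the non-obvious ingredient in (4), since $Q$ is not a valuation ring and there is no a priori reason for a unit of $\O$ lying in $Q$ to be invertible in $Q$.
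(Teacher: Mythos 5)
Your proof is correct, and it runs on the same engine as the paper's: everything is routed through Lemma \ref{ryq} (the description of $R$ and $Q$ via $v_\pi$), density of $Q$ in $\O$, and the scaling property (Property \ref{4}) of the mock valuation. The differences are in the order of dependencies and in two local arguments, and they are worth recording. You prove (5) first and directly: density applied to a nonzero element of $\M$ yields a nonzero element of $I$ of the same value, so $v$ maps $I\setminus\{0\}$ onto $\G_{>0}$; the paper instead deduces (5) at the very end from the properness in (3) and (4). That refinement is then what drives your (3), where you cancel the $v_\pi$-pole of $a\in\O\setminus R$ exactly so as to land in $R$ and combine with $R\subseteq\Frac(Q)$; the paper multiplies $a$ by a deep nonzero element of $Q$ so as to land in $Q$ directly, a step it phrases elliptically (``$0=q+r$''), and your explicit production of nonzero elements of $I$ of prescribed positive value is precisely what makes that step airtight. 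For (2) you use the Leibniz rule for $\dd$ where the paper uses Property \ref{4}; both are one-liners. The most substantive divergence is (4): the paper computes $\dd(q^{-1})=-q^{-2}\dd(q)$, which tacitly presupposes $q^{-1}\in R$ (the domain of $\dd$), whereas your argument via $Q$-linearity of $\pi$ and the scaling identity applied to $1=q\cdot q^{-1}$ needs only $q^{-1}\in\O$, so it sidesteps that point; this is a genuinely cleaner route to $Q\setminus I\subseteq Q^{\times}$. The only cosmetic omission is in (1): the item asserts that $Q$ is a subring, which requires the (trivial) remark that the kernel of the derivation $\dd$ is closed under products --- the same Leibniz computation you invoke in (2) --- so you should state it explicitly.
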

\begin{proof}
    \begin{enumerate}[wide]
        \item $R$ is a subring of $\O$ by definition and $Q$ is a subring of $R$ because $\dd(ab)=a\cdot\dd(b)+b\cdot\dd(a)=0$ whenever $\dd(a)=\dd(b)=0,$ i.e.~whenever $a,b\in Q.$ They are proper subrings as $\O$ is proper whenever the pre-diffeo-valuation data is dense.

        \item $I$ is an ideal of $Q$ because it is an ideal of $\O$ intersected with $Q.$ It is proper because $1\not\in I.$ We have to prove that $I$ is an ideal of $R.$ To this end, let $i\in I$ and $a\in R.$ Then $i\in\M$ and $a\in\O,$ so $ia\in\M.$ Also $v_\pi(a)\geq 0$ and, by Property \ref{4}, $\val(\pi(ia))=\val(i\cdot\pi(a))\geq v(i)+v_\pi(a)\geq v(i)>0,$ so $ia\in Q.$ 

        \item It is enough to show that $\O\subseteq\Frac(Q).$ Let $a\in\O$ and let $q\in Q$ and $r\in\M$ be such that $0=q+r$ and $v(r)+v_\pi(a)>0.$ Then $r=-q\in Q$ and, by Property \ref{4}, we have that $v_\pi(ra)=\val(\pi(ra))=\val(r\cdot\pi(a))=\infty,$ i.e.~$ra\in Q.$   

        \item We need to show that $Q\setminus I\subseteq Q^{\times}.$ Indeed, if $q\in Q$ is not in $I,$ then $\dd(q)=0$ and $v(q)=0.$ Therefore $v(q^{-1})=0$ and $\dd(q^{-1})=-q^{-2}\cdot\dd(q)=0,$ i.e.~$q^{-1}\in Q.$  

        \item If $I=0$ then $Q$ is a field, for $I$ is maximal. Then $Q=\Frac(Q)=K,$ contradicting properness.
        \qedhere
    \end{enumerate}
\end{proof}

\begin{lema}[Cf.~{\cite[Lemma 8.22]{dp4}}]
\label{reshat}
Let $\gres:R\to k^{2}$ be the map defined by $\gres(a)=(\res(a),\dd(a)).$ 
    \begin{enumerate}[wide]
        \item The map $\res$ induces a ring isomorphism between the quotient $Q/I$ and $k,$ so $k$ is a simple $Q$-module.

        \item $R$ and $I$ are $Q$-submodules of $K.$ 

        \item The map $\gres:R\to k^{2}$ induces an isomorphism of $Q$-modules between $R/I$ and $k^{2}.$ Therefore $R/I$ is a semi-simple $Q$-module of length 2.
    \end{enumerate}
\end{lema}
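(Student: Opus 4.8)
The plan is to reduce all three statements to a single substantive fact: that the residue map restricts to a \emph{surjection} $\res|_Q\colon Q\twoheadrightarrow k$. Granting this, parts (1)--(3) are formal manipulations with the derivation $\dd$ and the various $Q$-module structures, so the bulk of the work is in establishing that surjectivity.

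For part (1), note first that $\res|_Q$ is a ring homomorphism, being a restriction of the residue map $\res\colon\O\to k$, and that its kernel is $Q\cap\ker(\res)=Q\cap\M=I$; the only thing to check is surjectivity. This is exactly where the density hypothesis enters: given $x\in k$, lift it to some $a\in\O$ with $\res(a)=x$, then use that $Q$ is dense in $\O$ to find $q\in Q$ with $v(a-q)>0$, whence $\res(q)=\res(a)=x$. Thus $\res$ induces a ring isomorphism $Q/I\cong k$, so $I$ is a maximal ideal of $Q$ and $k\cong Q/I$ is a simple $Q$-module.

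Part (2) is immediate: $Q=\ker(\dd)$ is contained in $R$, which is a ring, so $R$ is stable under multiplication by $Q$ and is a $Q$-submodule of $K$; and $I$ is an ideal of $Q$ (indeed of $R$, by Lemma \ref{top}), hence a $Q$-submodule of $K$. For part (3), I would first verify that $\gres=(\res,\dd)\colon R\to k^2$ is a homomorphism of $Q$-modules: additivity is clear since $\res$ and $\dd$ are additive, and $Q$-linearity follows from Leibniz's rule together with $\dd(q)=0$ for $q\in Q$, which gives $\dd(qa)=\res(q)\dd(a)$, matching the action of $Q$ on $k$ (and componentwise on $k^2$) through $\res$. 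Its kernel is $\{a\in R:\res(a)=0,\ \dd(a)=0\}=Q\cap\M=I$. Surjectivity onto $k^2$ is where part (1) pays off: given $(x,y)\in k^2$, use surjectivity of $\dd$ to pick $r\in R$ with $\dd(r)=y$, then use part (1) to pick $q\in Q$ with $\res(q)=x-\res(r)$, so that $\gres(r+q)=(x,y)$. Hence $\gres$ descends to an isomorphism $R/I\cong k^2$ of $Q$-modules, and since $k$ is a simple $Q$-module by part (1), $R/I\cong k\oplus k$ is semi-simple of length $2$.

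The only genuine obstacle is the surjectivity of $\res|_Q$ in part (1): without density there is no reason a $\dd$-constant with a prescribed residue should exist, so density is essential there. Everything downstream is routine, and I would expect no difficulty beyond carefully tracking the $Q$-module structures — in particular the fact that $Q$ acts on $k$ and on $k^2$ through $\res$, which makes both $\res$ and $\dd$ (and hence $\gres$) $Q$-linear.
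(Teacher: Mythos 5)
Your proof is correct and follows essentially the same route as the paper: density of $Q$ in $\O$ yields surjectivity of $\res|_Q$ with kernel $I=Q\cap\M$, and the remaining verifications of $Q$-linearity, kernel, and surjectivity of $\gres$ are the same formal bookkeeping. The only cosmetic difference is that for surjectivity of $\gres$ you combine part (1) with surjectivity of $\dd$, whereas the paper invokes Remark \ref{fibdens} (density of the fibers of $\dd$) directly; these are the same density argument packaged differently.
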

\begin{proof}
    \begin{enumerate}[wide]
        \item We have to check that $\ker(\res)\cap Q=I$ and that $\res|_Q$ is onto. The first follows from $\ker(\res)=\M,$ and the second because of density: if $x\in k$ and $r\in R$ is such that $\dd(r)=x,$ then one can write $r=q+r'$ for some $q\in Q$ and $v(r')>0.$ Then $\res(r)=\res(q).$   

        \item $R$ is an overring of $Q$ and $I$ is an ideal of $Q.$

        \item We have to check that $\gres$ is a $Q$-module morphism, $\ker(\gres)=I$ and that $\gres$ is onto. First, if $q\in Q$ and $r\in R,$ then 
        \begin{align*}
            \gres(qr)&=(\res(qr),\dd(qr))\\
            &=(\res(q)\res(r),q\cdot\dd(r))\\
            &=(\res(q)\res(r),\res(q)\dd(r))\\
            &=\res(q)(\res(r),\dd(r))\\
            &=\res(q)\gres(r)\\
            &=q\cdot\gres(r).
        \end{align*}
        We also have that $\gres(r)=(\res(r),\dd(r))=(0,0)$ if and only if $r\in\M\cap\ker(\dd)=\M\cap Q=I.$ Finally, let $(x+\M,y+\M)\in k^{2},$ with $x,y\in\O.$ Use Remark \ref{fibdens} to find some $r\in R$ such that $\dd(r)=y+\M$ and $v(r-x)>0.$ Then $\gres(r)=(r+\M,\dd(r))=(x+\M,y+\M)$ as wanted.  
        \qedhere
    \end{enumerate}
\end{proof}

\begin{lema}[Cf.~{\cite[Lemma 8.23]{dp4}}]
\label{qw2}
    Let $a,b,c\in K.$ Then the $Q$-submodule generated by $\{a,b,c\}$ is generated by a two-element subset of $\{a,b,c\}.$ 
\end{lema}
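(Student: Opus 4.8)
The plan is to avoid working "inside $R$" altogether and instead push the problem into the mock module $D$, where the rank-one structure together with density of $Q$ makes the statement essentially immediate.

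First I would dispose of the degenerate cases: if one of $a,b,c$ is zero the claim is trivial, so assume $a,b,c\in K^{\times}$. Relabelling so that $v(c)\leq v(a),v(b)$ and dividing through by $c$, the $Q$-module $Qa+Qb+Qc$ becomes $c\cdot(Q+Qu+Qw)$ with $u:=a/c$ and $w:=b/c$ in $\O$, and the triple $\{1,u,w\}$ corresponds to $\{c,a,b\}$; so it suffices to show $Q+Qu+Qw$ is generated by a two-element subset of $\{1,u,w\}$ for arbitrary $u,w\in\O$. If $\pi(u)=0$, i.e.\ $u\in\ker\pi=Q$, then $Qu\subseteq Q$ and $Q+Qu+Qw=Q+Qw$ is generated by $\{1,w\}$; symmetrically if $\pi(w)=0$. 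So assume $\pi(u),\pi(w)\neq0$ in $D$.

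Next I would invoke the defining property of a mock $K/\M$: among the two nonzero elements $\pi(u),\pi(w)\in D$, one lies in $\O$ times the other. After possibly interchanging $u$ and $w$ (which only changes which two-element subset we end up with) assume $\pi(w)=\xi\pi(u)$ for some $\xi\in\O$, and set $\g_0:=\val(\pi(u))$; by Properties \ref{1} and \ref{2} of the secondary valuation on $D$, $\g_0$ is a genuine element of $\G$ with $\g_0\leq0$. Since the pre-diffeo-valuation data is dense, I can choose $q\in Q$ with $v(\xi-q)>-\g_0$; then $v(\xi-q)+\g_0>0$, so Property \ref{4} yields $\val\bigl((\xi-q)\pi(u)\bigr)=\infty$, i.e.\ $(\xi-q)\pi(u)=0$ in $D$. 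Hence $\pi(w)=\xi\pi(u)=q\pi(u)=\pi(qu)$, using that $\pi$ is a $Q$-module morphism and $q\in Q$ (Definition \ref{pddvf}); so $w-qu\in\ker\pi=Q$, giving $w\in Qu+Q$ and therefore $Q+Qu+Qw=Q+Qu$, which is generated by $\{1,u\}$. Unwinding the rescaling, $Qa+Qb+Qc$ equals $Qa+Qc$ (respectively $Qb+Qc$ in the interchanged case), as desired.

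I do not anticipate a genuine obstacle; the only care needed is with the initial rescaling and with translating the final relation $w\in Qu+Q$ back into a statement about $\{a,b,c\}$. The conceptual point — and the reason the argument is insensitive to the characteristic, consistent with the lemma being quoted from \cite[Lemma 8.23]{dp4} — is that one should resist deducing it from the weight-$2$ statement Fact \ref{w2} for $R$ (which leads into an unpleasant case analysis) and instead use directly that $D$ is rank one over $\O$ and that $Q\subseteq\O$ is dense.
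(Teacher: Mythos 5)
Your proof is correct and takes essentially the same route as the paper's: rescale by the element of smallest valuation to land in $\O$, write one $\pi$-image as an $\O$-multiple of the other (you via the mock-$K/\M$ comparability, the paper via the property that $\val(y)\geq\val(x)$ forces $y\in\O\cdot\pi(x)$-type containment), then use density of $Q$ to replace that $\O$-scalar by an element of $Q$ up to a term annihilating the image, and conclude modulo $\ker\pi=Q$. The remaining differences (splitting off the $\pi(u)=0$ case, ordering by comparability in $D$ rather than by $v_\pi$) are cosmetic.
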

\begin{proof}
    Without loss, we may assume that $v(c)\leq\min\{v(a),v(b)\},$ so that $a/c$ and $b/c$ are in $\O.$ Also, we may assume that $v_\pi(a/c)\leq v_\pi(b/c),$ i.e.~$\val(\pi(a/c))\leq\val(\pi(b/c)).$ If $v_\pi(b/c)=\infty,$ then $b/c\in Q$ and $b\in Qc,$ as wanted. If not, then $\val(\pi(a/c))$ is finite. By Property \ref{6}, there is some $x\in\O$ such that $x\cdot\pi(a/c)=\pi(b/c).$ Let $q\in Q$ and $r\in\M$ be such that $x=q+r$ and $v(r)+v_{\pi}(a/c)>0.$ Then $$\pi(b/c)=x\cdot\pi(a/c)=q\cdot\pi(a/c)+r\cdot\pi(a/c)=q\cdot\pi(a/c)=\pi(qa/c),$$
    so that $b/c\in q\cdot(a/c)+Q\subseteq Q\cdot(a/c)+Q$ and $b\in Qa+Qc,$ as wanted.
\end{proof}

\begin{lema}
\label{valinv}
    Let $a\in\O^{\times}.$ Then $v_\pi(a)=v_\pi(a^{-1}).$ 
\end{lema}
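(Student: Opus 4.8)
The plan is to compute $\pi(a^{-1})$ explicitly in terms of $\pi(a)$, and then to read off the equality of secondary valuations from Property \ref{4} together with the fact that units have valuation $0$. The subtlety to bear in mind is that $\pi$ is only a $Q$-module morphism, in general not multiplicative, so $v_\pi$ need not be additive over products; the argument gets around this by exploiting the structural hypothesis imposed on $\pi$ in each characteristic.

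First I would note that $a\in\O^{\times}$ forces $a^{-1}\in\O^{\times}$ and hence $v(a)=v(a^{-1})=v(a^{-2})=0$. The core identity I would establish is
\[
\pi(a^{-1})=\pm\, a^{-2}\cdot\pi(a)\qquad\text{in }D .
\]
If $\char(K)=2$, then $a^{-2}=(a^{-1})^{2}\in\O^{2}\subseteq Q=\ker\pi$, so writing $a^{-1}=a^{-2}\cdot a$ with $a^{-2}\in Q$ and using $Q$-linearity of $\pi$ yields $\pi(a^{-1})=a^{-2}\cdot\pi(a)$. If $\char(K)$ is odd, $\pi$ is a derivation $\O\to D$, so applying it to $1=a\cdot a^{-1}$ gives $0=\pi(1)=a\cdot\pi(a^{-1})+a^{-1}\cdot\pi(a)$; since $a\in\O^{\times}$ acts invertibly on the $\O$-module $D$, dividing by $a$ gives $\pi(a^{-1})=-a^{-2}\cdot\pi(a)$.

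To finish, I would apply $\val$ to both sides of this identity. The left-hand side gives $v_\pi(a^{-1})$ by definition. For the right-hand side, since $v(\pm a^{-2})=0$, Property \ref{4} says that $\val(\pm a^{-2}\cdot\pi(a))$ equals $v_\pi(a)$ if $v_\pi(a)\leq 0$ and equals $\infty$ if $v_\pi(a)>0$; but by Property \ref{1} the value $v_\pi(a)$ is always $\leq 0$ or $\infty$, so in every case this quantity is exactly $v_\pi(a)$. Comparing the two sides yields $v_\pi(a)=v_\pi(a^{-1})$.

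The one step requiring a little care is the characteristic-$2$ identity: it is $a^{-2}$, not $a^{-1}$, that lies in $Q$, so one must phrase $a^{-1}$ as the product of the scalar $a^{-2}\in Q$ and the element $a\in\O$ before invoking $Q$-linearity. Apart from that I foresee no genuine obstacle — the odd-characteristic case is just the quotient rule for derivations, and the final valuation computation is routine.
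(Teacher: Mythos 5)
Your proof is correct, and the odd-characteristic half is exactly the paper's argument (quotient rule for the derivation $\pi$, then the scaling Property \ref{4} together with Property \ref{1}). In characteristic $2$, however, you take a genuinely different and shorter route: you compute $\pi(a^{-1})$ directly by writing $a^{-1}=a^{-2}\cdot a$ with $a^{-2}=(a^{-1})^{2}\in\O^{2}\subseteq Q$ and invoking only the $Q$-linearity of $\pi$, which gives the unified identity $\pi(a^{-1})=\pm a^{-2}\cdot\pi(a)$ in both characteristics; the valuation comparison is then the same routine application of Property \ref{4}. The paper instead argues by contradiction: assuming $v_\pi(a)<v_\pi(a^{-1})$, it uses Property \ref{6} to write $\pi(a^{-1})=x\cdot\pi(a)$ with $x\in\M$, then the density of $Q$ in $\O$ to replace $x$ by some $i\in I$, and deduces $a^{-1}\in Q^{\times}$, forcing both values to be $\infty$. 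Your argument buys simplicity and a touch of generality — it never uses density of the pre-diffeo-valuation data, only the defining requirement $\O^{2}\subseteq Q$ — while the paper's proof stays within the toolkit (divisibility and density) used throughout its Section 8 analogues; there is no gap in your version.
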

\begin{proof}
    If $char(K)$ is odd, then $\pi$ is a derivation and $\pi(a^{-1})=-a^{-2}\cdot\pi(a),$ so Property \ref{4} shows that
    \begin{align*}
        \val(\pi(a^{-1}))&=\val(-a^{2}\cdot\pi(a))\\
        &=\begin{cases}
        \infty & \text{ if }\val(\pi(a))>0,\\
        \val(\pi(a)) & \text{ otherwise.}
    \end{cases}
    \end{align*}
    Thus, if $\val(\pi(a))>0$ then $\val(\pi(a))=\infty=\val(\pi(a^{-1})),$ and if $\val(\pi(a))\leq0,$ we also get $\val(\pi(a^{-1}))=\val(\pi(a))$ as wanted.
    If $\char(K)=2$ and the conclusion does not hold, without loss, $v_\pi(a)<v_\pi(a^{-1}).$ In this case $v_\pi(a)$ is finite. By Property \ref{6}, there is some $x\in\M$ such that $\pi(a^{-1})=x\cdot\pi(a).$ Find some $i\in Q, r\in\M$ such that $x=i+r$ and $v(r)+v_\pi(a)>0.$ Then $i=x-r\in I$ and
    $$\pi(a^{-1})=x\cdot\pi(a)=i\cdot\pi(a)+r\cdot\pi(a)=i\cdot\pi(a)=\pi(ia),$$
    i.e.~$a^{-1}-ia\in Q.$ Then $1-ia^{2}\in Qa.$ Since $\O^{2}\subseteq Q$ and $i\in I,$ then $ia^2\in I$ and $1-ia^2\in Q^{\times}.$ It follows that $1\in Qa,$ i.e.~$a^{-1}\in Q.$ Moreover, $v(a^{-1})=0$ implies that $a^{-1}\in Q^{\times}$ because the unique maximal ideal of $Q$ is $I=Q\cap\M.$ Then $a\in Q,$ meaning that $v_\pi(a)=\infty=v_\pi(a^{-1}),$ a contradiction.  
\end{proof}

\subsection{Diffeo-valuation Inflators}

Recall that if $\Lambda$ is a modular lattice, then the \emph{cube rank} of $\Lambda$ is the maximum $n\in\N$ such that there is a strict $n$-cube in $\L,$ and $\infty$ if no such maximum exists.   

\begin{cor}
\label{rrk}
    The cube rank of $\Sub_Q(K)$ is $2.$ 
\end{cor}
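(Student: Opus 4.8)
The plan is to prove the two inequalities $\operatorname{c-rk}\Sub_Q(K)\ge 2$ and $\operatorname{c-rk}\Sub_Q(K)\le 2$ separately, the first from the explicit structure of $R/I$ and the second from Lemma~\ref{qw2}.

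For the lower bound I would exhibit a strict $2$-cube directly. By Lemma~\ref{reshat}(3) the map $\gres\colon R\to k^{2}$ induces an isomorphism of $Q$-modules $R/I\cong k^{2}$, so pulling back the two coordinate lines $k\times 0$ and $0\times k$ along $\gres$ produces $Q$-submodules $M_1,M_2$ of $R$ with $I\subsetneq M_i\subsetneq R$, $M_1\neq M_2$, $M_1\wedge M_2=M_1\cap M_2=I$ and $M_1\vee M_2=M_1+M_2=R$. Since these four submodules are pairwise distinct, $\{I,M_1,M_2,R\}$ is a strict $2$-cube in $\Sub_Q(K)$, so the cube rank is at least $2$.

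For the upper bound, suppose toward a contradiction that $\Sub_Q(K)$ contains a strict $3$-cube $(v_S)_{S\subseteq\{1,2,3\}}$, where $v_S\vee v_T=v_{S\cup T}$, $v_S\wedge v_T=v_{S\cap T}$ and all eight submodules are distinct. For $\{i,j,\ell\}=\{1,2,3\}$, strictness forces $v_{\{i\}}\not\subseteq v_{\{j\}}+v_{\{\ell\}}$: otherwise $v_{\{1,2,3\}}=v_{\{i\}}+v_{\{j\}}+v_{\{\ell\}}=v_{\{j\}}+v_{\{\ell\}}=v_{\{j,\ell\}}$, contradicting distinctness. Hence one may choose $a_i\in v_{\{i\}}\setminus(v_{\{j\}}+v_{\{\ell\}})$ for each $i$. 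Now Lemma~\ref{qw2} applied to $\{a_1,a_2,a_3\}$ gives that the $Q$-submodule $Qa_1+Qa_2+Qa_3$ is already generated by some two-element subset, say $Qa_1+Qa_2+Qa_3=Qa_i+Qa_j$; writing $\ell$ for the third index, $a_\ell\in Qa_i+Qa_j\subseteq v_{\{i\}}+v_{\{j\}}$, contradicting the choice of $a_\ell$. Thus $\Sub_Q(K)$ has no strict $3$-cube, and since every strict $n$-cube with $n\ge 3$ contains a strict $3$-cube as a face, it has no strict $n$-cube for any $n\ge 3$. Together with the lower bound, the cube rank of $\Sub_Q(K)$ equals $2$.

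There is no genuinely hard step here: the argument is a short combination of Lemma~\ref{qw2} (which does the real work for the upper bound) and Lemma~\ref{reshat} (which supplies the lower bound). The only point demanding a little care is the bookkeeping with the faces of the $3$-cube — checking that strictness is precisely what allows one to choose the $a_i$ outside the relevant sums, so that the two-element generation coming from Lemma~\ref{qw2} yields an immediate contradiction.
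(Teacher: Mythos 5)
Your proof is correct and follows essentially the same route as the paper: Lemma~\ref{qw2} rules out a strict $3$-cube, and pulling back the coordinate-line $2$-cube of $k^{2}$ along $\gres$ via Lemma~\ref{reshat}(3) supplies the strict $2$-cube. The only cosmetic difference is bookkeeping: you pick $a_i\in v_{\{i\}}\setminus\bigl(v_{\{j\}}+v_{\{\ell\}}\bigr)$ (justified by strictness of the cube), whereas the paper picks $b_i\in B_i\setminus A$ and contradicts lattice-independence over $A$; both choices make Lemma~\ref{qw2} deliver the same contradiction.
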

\begin{proof}
    Suppose there is a strict 3-cube in $\Sub_Q(K).$ Then there are four $Q$-submodules $A,B_1,B_2,B_3$ of $K$ such that $A\subseteq B_1\cap B_2\cap B_3$ and $\{B_1,B_2,B_3\}$ are lattice-independent over $A.$ Then none of the $B_i$ is equal to $A,$ so we can choose some $b_i\in B_i/A$ for each $i.$ The $Q$-submodule of $K$ generated by $\{b_1,b_2,b_3\}$ is generated, say, by $\{b_1,b_2\}$ by Lemma \ref{qw2}, i.e.~$b_3\in Qb_1+Qb_2.$ But then $B_3\cap (B_1+B_2)\supseteq Qb_3\supsetneq A,$ contradicting independence over $A$. Therefore the cube rank of $\Sub_Q(K)$ is at most $2.$ 
    Also, by Statement 3 of Lemma \ref{reshat}, $R/I\cong k^{2}$ as $Q$-modules via $\gres,$ so we may pull back the strict 2-cube 
    \begin{center}
        \begin{tikzcd}
                                                  & k^{2} \arrow[rd, no head] &           \\
0\oplus k \arrow[rd, no head] \arrow[ru, no head] &                           & k\oplus 0 \\
                                                  & 0 \arrow[ru, no head]     &          
\end{tikzcd}
    \end{center}
    to a strict 2-cube
    \begin{center}
        \begin{tikzcd}
                                                              & R \arrow[rd, no head] &                       \\
\gres^{-1}(0\oplus k) \arrow[rd, no head] \arrow[ru, no head] &                       & \gres^{-1}(k\oplus 0) \\
                                                              & I \arrow[ru, no head] &                      
\end{tikzcd}
    \end{center}
    implying that the cube rank of $\Sub_Q(K)$ is at least 2.
\end{proof}

\begin{teorema}[Cf.~{\cite[Theorem 8.24]{dp4}}]
\label{dpdvd}
    Let $(K,\O,D,R,\phi,\dd,\pi)$ be a dense pre-diffeo-valued field. As before, put $k_0=\F_p^{alg}\cap K.$ Then there is a malleable $k_0$-linear 2-inflator $\s:\Dir_K(K)\to\Dir_k(k^{2})$ given by $$\s(V)=\{(\widehat{\res}(x_1),\.,\widehat{\res}(x_n)):(x_1,\.,x_n)\in V\cap R^{n}\},$$
    where $V\in\Sub_K(K^{n})$ and $\widehat{\res}$ is as in Lemma \ref{reshat}.
\end{teorema}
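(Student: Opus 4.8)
The plan is to verify three things, in order: that $\s=(\s_n)_{n\geq 1}$ is a well-defined directory morphism $\Dir_K(K)\to\Dir_k(k^2)$, that it is a $2$-inflator, and that it is malleable. Essentially all of the work lies in the second point.

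First, that $\s$ is a directory morphism. Write $\gres^{\oplus n}\colon R^n\to (k^2)^n$ for the coordinatewise map $(x_1,\dots,x_n)\mapsto(\gres(x_1),\dots,\gres(x_n))$; by Lemma \ref{reshat} it is a morphism of $Q$-modules with kernel $I^n$. Since $V\leq K^n$ is in particular a $Q$-submodule and $R^n$ is a $Q$-submodule of $K^n$, the set $V\cap R^n$ is a $Q$-submodule of $R^n$, so $\s_n(V)=\gres^{\oplus n}(V\cap R^n)$ is a $Q$-submodule of $(k^2)^n$; as the $Q$-action on $k^2$ factors through $Q/I=k$, it is in fact a $k$-subspace, hence a legitimate object of $\Dir_k(k^2)$. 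Monotonicity of $\s_n$ is immediate, and $\s_{n+m}(V\oplus W)=\s_n(V)\oplus\s_m(W)$ follows from $(V\oplus W)\cap R^{n+m}=(V\cap R^n)\oplus(W\cap R^m)$. For $\Gl_n(k_0)$-equivariance, recall $k_0\subseteq Q$ by Lemma \ref{k0alg}; hence any $\mu\in\Gl_n(k_0)$ restricts to a bijection $R^n\to R^n$ with $(\mu\cdot V)\cap R^n=\mu(V\cap R^n)$, and since $\gres(qa)=\res(q)\gres(a)$ for $q\in Q$, $a\in R$ (because $\dd(q)=0$) one has $\gres^{\oplus n}\circ\mu=\ol\mu\circ\gres^{\oplus n}$ on $R^n$, where $\ol\mu\in\Gl_n(k)$ is the image of $\mu$ under the embedding $k_0\hookrightarrow k$ induced by $\res$; as this is precisely the $\Gl_n(k_0)$-action on $\Dir_k(k^2)$, we get $\s_n(\mu\cdot V)=\ol\mu\cdot\s_n(V)$.

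The heart of the matter is that $\s$ is a $2$-inflator, i.e.~$l(\s_n(V))=\dim_k\s_n(V)=2\dim_K V$ for all $V\leq K^n$. The kernel computation above gives a natural isomorphism $\s_n(V)\cong(V\cap R^n)/(V\cap I^n)$, and since $\Frac(Q)=K$ (Lemma \ref{top}) the $Q$-module $V\cap R^n$ spans $V$ over $K$. I would establish the jump estimate: for $V\subsetneq W\leq K^n$ with $\dim_K W=\dim_K V+1$ one has $\dim_k\s_n(W)=\dim_k\s_n(V)+2$; iterating it along a maximal flag of $V$ and using $\s_n(0)=0$ then yields the formula. The bound $\dim_k\s_n(W)\leq\dim_k\s_n(V)+2$ should follow from Lemma \ref{qw2} --- equivalently, from $\Sub_Q(K)$ having cube rank $2$ (Corollary \ref{rrk}) --- which limits how much $\gres^{\oplus n}(V\cap R^n)$ can grow when one line is adjoined to $V$; note for instance that on a single line $L=Kw$, writing $L\cap R^n\cong\mathfrak a$ and $L\cap I^n\cong\mathfrak b$ for the $Q$-submodules $\mathfrak a=\{x\in K:xw\in R^n\}$ and $\mathfrak b=\{x\in K:xw\in I^n\}$ of $K$, Lemma \ref{qw2} applied to three elements of $\mathfrak a$ forces $\dim_k(\mathfrak a/\mathfrak b)=\dim_k\s_n(L)\leq 2$. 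The reverse bound $\dim_k\s_n(W)\geq\dim_k\s_n(V)+2$ should come from the density of $Q$ in $\O$, which forces the adjoined line to contribute its full two-dimensional residue; for the line $L=Kw$ one checks $\dim_k(\mathfrak a/\mathfrak b)=2$ by a short case analysis on the position of the slopes of $w$ relative to $\O$, $R$ and $Q$, using that $\gres\colon R\to k[\e]$, $a\mapsto\res(a)+\dd(a)\e$, is a ring homomorphism, Lemma \ref{valinv}, and the hypothesis $\O^2\subseteq Q$. The whole argument runs parallel to the proof of \cite[Theorem 8.24]{dp4}, whose combinatorial core is characteristic-free; the only genuine modifications occur where the hypothesis ``$\pi$ is a derivation'' is replaced by ``$\O^2\subseteq Q$'', as already happened in Lemmas \ref{k0alg} and \ref{valinv}. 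I expect this step to be the main obstacle.

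Finally, malleability is a clean consequence once the inflator property is known. Let $V\subseteq W\leq K^n$, and let $Y$ satisfy $\s_n(V)\leq Y\leq\s_n(W)$ with $l(Y)=l(\s_n(V))+1$, so $Y/\s_n(V)$ is one-dimensional over $k$; pick $\ol y\in Y\setminus\s_n(V)$. Since $\s_n(W)=\gres^{\oplus n}(W\cap R^n)$, there is $\vec x_0\in W\cap R^n$ with $\gres^{\oplus n}(\vec x_0)=\ol y$, and $\vec x_0\notin V$ (otherwise $\ol y\in\gres^{\oplus n}(V\cap R^n)=\s_n(V)$). Set $X=V+K\vec x_0$: then $V\subseteq X\subseteq W$ and $l(X)=\dim_K X=\dim_K V+1=l(V)+1$; moreover $\vec x_0\in X\cap R^n$, so $\ol y\in\s_n(X)$, hence $Y=\s_n(V)+k\ol y\leq\s_n(X)$, while $\s_n(X)\leq\s_n(W)$ by monotonicity, and $l(\s_n(X))=2\dim_K X=l(Y)+1$ by the inflator property. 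Thus $\s$ is malleable, completing the plan.
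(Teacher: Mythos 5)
Your verification that $\s$ is a directory morphism and your malleability argument are both correct; the malleability step (lift $\ol y\in Y\setminus\s_n(V)$ to some $\vec x_0\in W\cap R^{n}$ and take $X=V+K\vec x_0$) is clean and does not even need the inflator property. The genuine gap is the heart of the theorem: the statement that $\s$ is a $2$-inflator, i.e.\ $l(\s_n(V))=2\dim_K V$ for every $V\leq K^{n}$, is not proved --- you only outline a plan and yourself flag it as the main obstacle. The upper half of your jump estimate does follow from Lemma \ref{qw2} as you say (a $Q$-dependence among the coefficients along the adjoined line, with coefficients in $Q\subseteq R$, pushes through $\gres^{\oplus n}$ to a $k$-dependence modulo $\s_n(V)$), but the lower half is left entirely open, and that is where the work lies. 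For a line $L=Kw$ of arbitrary slope, your claim $\dim_k(\mathfrak a/\mathfrak b)=2$ amounts to showing that $\bigl(\bigcap_i w_i^{-1}R\bigr)\big/\bigl(\bigcap_i w_i^{-1}I\bigr)$ has length exactly $2$ --- in effect, that mutations of $\s$ are again $2$-inflators --- and this is not a short case analysis: it requires the density of $Q$ in $\O$ and the secondary valuation in an essential way, not merely that $\gres$ is a ring map together with Lemma \ref{valinv} and $\O^{2}\subseteq Q$. Moreover, even granting the line case, your flag induction does not close: from $\dim_k\s_n(L)=2$ one cannot conclude $\dim_k\s_n(V+L)\geq\dim_k\s_n(V)+2$, since $\s_n$ is only superadditive and $\s_n(L)\cap\s_n(V)$ may be non-zero, so the adjoined line need not contribute two new dimensions without a further argument.

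The paper sidesteps exactly this by invoking Johnson's pedestal machinery: since $\Sub_Q(K)$ has cube rank $2$ (Corollary \ref{rrk}, which is the same input as your Lemma \ref{qw2}) and $I$ is a pedestal because $R/I\cong k^{2}$ is a semisimple $Q$-module of length $2$ (Lemma \ref{reshat}), \cite[Propositions 8.9 and 8.12]{dp3} directly produce a malleable $2$-inflator $V\mapsto(V\cap R^{n}+I^{n})/I^{n}$, and the stated formula is just this inflator rewritten through the isomorphism $R/I\cong k^{2}$ induced by $\gres.$ A self-contained proof along your lines would essentially have to reprove those two propositions; as written, your proposal settles the two easier thirds (directory morphism, malleability) but leaves the defining length computation unestablished.
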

\begin{proof}
    Let $\Ca$ be the category of $Q$-modules, and let $F:K\Vect\to\Ca$ be the forgetful functor. Let $G:\Ca\to k_0\Vect$ the forgetful functor, which is well defined by Lemma \ref{k0alg}. By Corollary \ref{rrk}, $F(K)=K$ has cube rank 2. By item 3 of \ref{reshat}, $I$ is a pedestal of $\Sub_Q(K).$ Propositions 8.9 and 8.12 of \cite{dp3} show that there is a malleable 2-inflator $\s:\Dir_K(K)\to\Dir_Q(R/I)$ given by $$\s_n(V)=(V\cap R^{n}+I^{n})/I^{n}\cong\rho^{\oplus n}(V\cap R^{n}),$$ where $\rho:R\to R/I$ is the natural projection and $\rho^{\oplus n}:R^{n}\to(R/I)^{n}$ is coordinate-wise $\rho.$ Under the isomorphism $\Dir_Q(R/I)\cong\Dir_Q(k^{2})\cong\Dir_k(k^2)$ induced by $\gres,$ we get that $$\s(V)=\{(\gres(x_1),\.,\gres(x_n)):(x_1,\.,x_n)\in V\cap R^n\}$$
    for any $V\in\Sub_K(K^{n}),$ as wanted.
\end{proof}

\begin{defin}
    We say that a 2-inflator on a field of positive characteristic is a \emph{diffeo-valuation} inflator if it is isomorphic to the inflator of dense pre-diffeo-valuation data given by Theorem \ref{dpdvd}.
\end{defin}
Since pre-diffeo-valuation data and diffeo-valuation data coincide in fields of odd characteristic, this definition corresponds to the one given by Johnson for odd characteristic fields.

\begin{teorema}[Cf.~{\cite[Theorem 8.26]{dp4}}]
\label{dvi}
    Let $\s$ be an isotypic, malleable 2-inflator on a field of positive characteristic. If no mutation of $\s$ is weakly of $V^2$-type, then $\s$ is a diffeo-valuation inflator.
\end{teorema}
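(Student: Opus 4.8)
The plan is to run the argument of \cite[Theorem 8.26]{dp4} in reverse: starting from an isotypic, malleable 2-inflator $\s$ with no weak $V^2$-type mutations, we must reconstruct all of the dense pre-diffeo-valuation data of Definition \ref{pddvf} and check that $\s$ is (isomorphic to) the inflator that this data produces via Theorem \ref{dpdvd}. The point of the preceding sections is that almost everything is already in place. First I would invoke Fact \ref{ingr} (applicable since $\s$ satisfies the Strong Assumptions) to obtain the field $k\supseteq k_0$, the fundamental ring $R=R_\s\subseteq K$, the fundamental ideal $I$, the residue map $\gres:R\twoheadrightarrow R/I\cong k[\e]$, and the presentation of $\s$ as $\s_n(V)=\{(\gres(x_1),\dots,\gres(x_n)):(x_1,\dots,x_n)\in V\cap R^n\}$. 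Then Corollary \ref{intclval} gives the valuation ring $\O$ = integral closure of $R$ in $K$, Proposition \ref{resfield} identifies its residue field with $k$, and the derivation $\dd_0:R\to k$ with kernel $Q$ comes from the dual-numbers structure on $k[\e]$ as in the opening of Section \ref{sec4}. Fact \ref{wildpos} gives $R\cap\M=\p$ and non-triviality of $\O$, and Proposition \ref{o2inq} supplies exactly the characteristic-2 hypothesis $\O^2\subseteq Q$ required in Definition \ref{pddvf}.

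The next step is to produce the mock $K/\M$ together with the map $\pi:\O\to D$. Here I would take $D=\O/Q$ with the $\O$-module structure supplied by Fact \ref{mkm} (Propositions 5.23, 5.30, 5.31 of \cite{dp4}), which makes $(\O/Q,\phi)$ into a mock $K/\M$ where $\phi:k\to\O/Q$ is induced by $R\to\O/Q$ (its kernel being $Q\cap R$, and injectivity following since $R/(Q\cap R)\cong R/? $ — more precisely one checks $R\cap Q = Q$, wait: $Q\subseteq R$, so $\phi$ is $R/Q \hookrightarrow \O/Q$, injective because $R\cap Q=Q$ trivially and the quotient $R/Q\cong k$ via $\dd_0$). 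Let me restate: $\phi$ is the composite $k\xrightarrow{\sim} R/Q\hookrightarrow\O/Q=D$, and $\pi:\O\to D$ is the quotient map, which is visibly a $Q$-module morphism with $\ker\pi=Q$, surjective, and fits into the commuting square with $\dd=\dd_0$ since $\pi|_R$ factors through $R/Q\cong k$. Thus $(K,\O,D,R,\phi,\dd_0,\pi)$ is a pre-diffeo-valued field; and it is \emph{dense} precisely by Fact \ref{QDENSE}, which says $Q$ is dense in $\O$ in the valuation topology. Finally I would apply Theorem \ref{dpdvd} to this data: it produces a malleable $k_0$-linear 2-inflator $\s'':\Dir_K(K)\to\Dir_k(k^2)$ with $\s''_n(V)=\{(\gres(x_1),\dots,\gres(x_n)):(x_1,\dots,x_n)\in V\cap R^n\}$, and comparing this formula with the presentation of $\s$ from Fact \ref{ingr} shows $\s\cong\s''$, so $\s$ is a diffeo-valuation inflator.

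There is one technical wrinkle: Theorem \ref{dpdvd} uses the map $\gres:R\to k^2$, $a\mapsto(\res(a),\dd(a))$, whereas Fact \ref{ingr} uses the quotient $\gres:R\to R/I\cong k[\e]$; I would note that under the identification $k[\e]\cong k^2$, $x+y\e\mapsto(x,y)$, these two maps agree (as already observed right before the proof of Proposition \ref{resfield}: $\gres(a)=x+y\e$ forces $\res(a)=x$, and $y=\dd_0(a)$ by the definition of $\dd_0$). Also one should check that the fundamental ring and ideal of the inflator $\s''$ produced by Theorem \ref{dpdvd} are again $R$ and $I$ — but this is immediate from the explicit formula, since $\s''_2(\T_a)$ is the graph of an endomorphism exactly when $a\in R$, and is $k^2\oplus 0$ exactly when $a\in I$. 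I expect the main obstacle to be bookkeeping rather than mathematics: making sure that the $\O$-module structure on $D=\O/Q$ furnished by Fact \ref{mkm} is literally the one Definition \ref{pddvf} wants (compatibility of the $Q$-action, divisibility, the valuation-chain condition), and that the secondary valuation $\val_\dd$ of Section \ref{sec4} matches the secondary valuation on $D$ used implicitly in the density argument. All of these are recorded in the cited facts, so the proof should be a careful assembly; no genuinely new estimate is needed beyond Proposition \ref{o2inq}, which is what makes the characteristic-2 case go through.
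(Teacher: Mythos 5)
Your characteristic-2 assembly is essentially the paper's own proof: Fact \ref{ingr} for $k$, $R$, $I$ and the presentation of $\s$; Corollary \ref{intclval} and Proposition \ref{resfield} for the valuation ring $\O$ and its residue field; the dual-numbers derivation $\dd$ with kernel $Q$; Proposition \ref{o2inq} for $\O^{2}\subseteq Q$; Fact \ref{mkm} for the mock $K/\M$ structure on $\O/Q$ with $\pi$ the quotient map; and finally Theorem \ref{dpdvd} plus a comparison of the two explicit formulas. You are in fact slightly more careful than the paper's write-up on two points it leaves implicit: you cite Fact \ref{QDENSE} to get \emph{density} of the pre-diffeo-valuation data (needed before Theorem \ref{dpdvd} applies), and you spell out $\phi$ as $k\cong R/Q\hookrightarrow\O/Q$ and the commutativity $\pi|_R=\phi\circ\dd$; both checks are correct and worth recording.

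The one genuine gap is that the theorem is stated for \emph{any} positive characteristic, and your argument only covers $p=2$. The machinery you invoke from Sections \ref{sec3}--\ref{sec4} (Corollary \ref{spf}, Lemma \ref{quad}, Lemma \ref{degwild}, Corollary \ref{intclval}, Proposition \ref{o2inq}) is characteristic-2 specific; in odd characteristic $\O^{2}\subseteq Q$ is simply false (polarizing, $2ab=(a+b)^{2}-a^{2}-b^{2}$ would put $\O\cdot\O$ inside $Q$, hence $\O\subseteq Q$), and Definition \ref{pddvf} instead demands that $\pi$ be a \emph{derivation}, which your $\pi$ (the bare quotient map $\O\to\O/Q$) is not shown to be. The paper dispatches the odd case in one line by citing Johnson's original \cite[Theorem 8.26]{dp4}, together with the observation following Lemma \ref{ryq} that his dense diffeo-valuation data is a special case of dense pre-diffeo-valuation data, so his conclusion already yields a diffeo-valuation inflator in the present sense. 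Add that case split at the start of your proof and it matches the paper's argument completely.
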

\begin{proof}
    If $\char(K)$ is odd, then \cite[Theorem 8.26]{dp4} shows that there is diffeo-valuation data inducing our pre-diffeo-valuation data, and whose associated inflator is isomorphic to $\s.$ 
    If $\char(K)=2,$ then Fact \ref{ingr} shows that there is a field $k$ extending $k_0$ and an isomorphism of $k_0$-algebras $R/I\cong k[\e],$ where $R$ and $I$ are the fundamental ring and ideal of $\s$ respectively. In fact, such isomorphism is induced by the generalized residue map $\gres:R\to k[\e],$ which is surjective. By Corollary \ref{intclval}, the integral closure $\O$ of $R$ is a valuation ring of $K$, and by Proposition \ref{resfield} the residue field of $\O$ is $k.$ Since $\gres$ is a surjective ring homomorphism, there is a surjective derivation $\dd:R\to k$ such that $\gres(r)=\res(r)+\dd(r)\cdot\e$ for all $r\in R.$ We defined $Q$ to be the kernel of $\dd,$ and proved that $\O^{2}\subseteq Q$ in Proposition \ref{o2inq}. By Fact \ref{mkm}, one can endow $\O/Q$ with an $\O$-module structure that extends its $Q$-module structure and making it a mock $K/\M.$ Therefore the projection $\pi:\O\to\O/Q$ is a $Q$-module surjection with kernel $Q.$ We have recovered all of the pre-diffeo-valuation data. Since $R/I\cong k[\e]\cong k^{2}$ as $k$-modules, the inflator from Theorem \ref{dpdvd} is isomorphic to $\s$ by Fact \ref{ingr}.  
\end{proof}

\begin{cor}[Cf.~{\cite[Corollary 8.27]{dp4}}]
    Let $\s$ be a malleable 2-inflator on a field of positive characteristic. Then some mutation of $\s$ is weakly of $V^2$-type, or a diffeo-valuation inflator.
\end{cor}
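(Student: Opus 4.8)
The plan is to argue by a simple dichotomy on $\s$ and reduce everything to Theorem \ref{dvi}. Either some mutation of $\s$ is weakly of $V^2$-type --- in which case there is nothing more to prove --- or no mutation of $\s$ is weakly of $V^2$-type. In the latter case $\s$ satisfies the Weak Assumptions: it is malleable by hypothesis, and none of its mutations is of weak $V^2$-type. Note that this argument is uniform in the characteristic, since Theorem \ref{dvi} is; in odd characteristic the statement is of course already \cite[Corollary 8.27]{dp4}.

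Assume then that no mutation of $\s$ is weakly of $V^2$-type. I would first produce an isotypical mutation $\s'$ of $\s$. Since $\s$ is not weakly of $V^2$-type, not all elements of $K$ are tame with respect to $\s$ (cf.\ the Lemma before Remark \ref{wildex}), so there is a wild slope $\a\in K$; mutating along $K\cdot(1,\a)$ yields an isotypical inflator $\s'$, as recalled in the discussion preceding Definition \ref{str} and in \cite[Corollary 2.3]{dp4}. The point to check is that $\s'$ still satisfies the hypotheses of Theorem \ref{dvi}: a mutation of a malleable inflator is malleable (\cite[Proposition 10.13]{dp3}), and a mutation of $\s'$ is again a mutation of $\s$ (\cite[Proposition 10.5]{dp3}), so no mutation of $\s'$ is weakly of $V^2$-type either. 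Hence $\s'$ is isotypic, malleable, and has no weakly-$V^2$-type mutations, and Theorem \ref{dvi} applies to give that $\s'$ is a diffeo-valuation inflator. As $\s'$ is a mutation of $\s$, this completes the proof.

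The only potential obstacle is bookkeeping rather than mathematics: one must make sure that the property ``no mutation is weakly of $V^2$-type'' --- equivalently, that the Weak (and hence the Strong) Assumptions --- are inherited by the isotypical mutation $\s'$. This is exactly the mutation-invariance observation made just after the statement of the Weak Assumptions in Section \ref{sec2}, so no new idea is needed. All the substantive content is already packaged inside Theorem \ref{dvi}, which in characteristic $2$ rests on Fact \ref{ingr}, Corollary \ref{intclval}, Proposition \ref{o2inq} and Fact \ref{mkm}; the present corollary is just the convenient ``every malleable $2$-inflator, up to mutation'' repackaging of that theorem.
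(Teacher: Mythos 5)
Your proposal is correct and follows essentially the same route as the paper: split on whether some mutation is weakly of $V^{2}$-type, otherwise pass to an isotypical mutation $\s'$ (which exists because wild elements exist, cf.\ Remark \ref{wildex} and \cite[Corollary 2.3]{dp4}), note that malleability and the absence of weak $V^{2}$-type mutations are inherited by $\s'$, and apply Theorem \ref{dvi}. The only difference is that you spell out the existence of the isotypical mutation, which the paper's proof states without elaboration.
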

\begin{proof}
    If no mutation of $\s$ is of weak $V^2$-type, then there is a mutation $\s'$ of $\s$ which is isotypic. Such a $\s'$ is still malleable and admits no weak $V^2$-type mutations too. Therefore $\s'$ is a diffeo-valuation inflator by Theorem \ref{dvi}.
\end{proof}

\begin{defin}
    Let $S$ be a multi-valuation ring of a field $K$ of characteristic $p>0.$ Recall the notation $k_0=\F_p^{alg}\cap K,$ and let $n\geq 1$ be the number of maximal ideals of $S.$ We say that $S$ is \emph{good} if $|k_0|\geq n.$
\end{defin}

\begin{lema}
\label{k0mult}
    Let $S$ be a good multi-valuation ring of $K.$ If $b\in K\setminus S,$ then there is some $q\in k_0$ such that $1/(b-q)\in S.$  
\end{lema}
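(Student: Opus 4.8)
The plan is to work with the concrete structure of $S$: recall that a multi-valuation ring of $K$ with $n$ maximal ideals $\m_1,\dots,\m_n$ is the irredundant intersection $S=\O_1\cap\dots\cap\O_n$ of the pairwise incomparable valuation overrings $\O_i=S_{\m_i}$; write $v_i$, $\m_i$, $k_i$, $\res_i$ for the associated valuation, maximal ideal, residue field and residue map. First I would record two elementary facts about $k_0=\F_p^{alg}\cap K$. Every nonzero $q\in k_0$ is a root of unity, so it satisfies $q^{p^m}=q$ for some $m\geq 1$, whence $v_i(q)=p^m v_i(q)$ forces $v_i(q)=0$ because the value group is torsion-free; thus $k_0\subseteq S$ and every nonzero element of $k_0$ is a unit in each $\O_i$. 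Moreover $\res_i$ restricts to an injection on $k_0$, being a ring homomorphism out of a field, so the elements $\res_i(q)$ for $q\in k_0$ are pairwise distinct in $k_i$.

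Next I would split $\{1,\dots,n\}$ into $J=\{i:b\notin\O_i\}$ and its complement. Since $b\notin S=\bigcap_i\O_i$, the set $J$ is nonempty, so its complement has at most $n-1$ elements. For any $q\in k_0$ one has $b-q\neq 0$ (otherwise $b=q\in k_0\subseteq S$), so $1/(b-q)$ is defined. For $i\in J$, since $v_i(b)<0\leq v_i(q)$ we get $v_i(b-q)=v_i(b)<0$, hence $1/(b-q)\in\m_i\subseteq\O_i$, whatever $q$ is chosen. For $i\notin J$ we have $b,q\in\O_i$, so $1/(b-q)\in\O_i$ holds precisely when $b-q\in\O_i^{\times}$, i.e.\ when $\res_i(b)\neq\res_i(q)$.

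It therefore suffices to choose $q\in k_0$ with $\res_i(q)\neq\res_i(b)$ for every $i\notin J$; for such a $q$ we obtain $1/(b-q)\in\O_i$ for all $i$, i.e.\ $1/(b-q)\in S$, as desired. By injectivity of $\res_i$ on $k_0$, for each $i\notin J$ there is at most one \emph{forbidden} value of $q$, hence at most $n-1$ forbidden values overall, whereas $|k_0|\geq n$ because $S$ is good; so a legitimate $q$ exists. The only step needing a moment of care is this final count: one genuinely must use that at least one $\O_i$ omits $b$ to bring the number of forbidden residues down to $n-1$, since the bound $|k_0|\geq n$ alone would not rule out $n$ forbidden values. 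Everything else is a routine unwinding of the valuation-theoretic description of multi-valuation rings together with the elementary behaviour of $k_0$.
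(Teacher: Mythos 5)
Your proof is correct. It differs from the paper's in that the paper disposes of the lemma in two lines: it notes (as in Lemma \ref{k0alg}) that each localization of $S$ at a maximal ideal is a valuation ring containing $k_0$, and then quotes Johnson's characterization of multi-valuation $k_0$-algebras (Lemma 5.24 of \emph{dp3}), which directly asserts that for pairwise distinct $q_1,\dots,q_n\in k_0$ one of $b,1/(b-q_1),\dots,1/(b-q_n)$ lies in $S$; since $b\notin S$ the conclusion is immediate. You instead unwind the structure of $S$ as the intersection of the valuation rings $\O_i=S_{\m_i}$ and argue by hand: the indices with $v_i(b)<0$ impose no condition on $q$, each remaining index forbids at most one residue of $q$ by injectivity of $\res_i$ on $k_0$, and the count closes because $b\notin S$ leaves at most $n-1$ constraints against $|k_0|\geq n$ choices. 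In effect you reprove the relevant direction of the cited black-box lemma. What the paper's route buys is brevity and reuse of existing machinery; what yours buys is a self-contained argument that makes visible exactly where goodness and the hypothesis $b\notin S$ enter (your remark that the latter is needed to cut the forbidden set to $n-1$ when $|k_0|=n$ is a genuine and correct observation). The background facts you invoke --- that a multi-valuation ring is the intersection of its localizations at maximal ideals, that these are valuation rings of $K$, and that value groups are torsion-free so $k_0$ consists of units --- are standard and consistent with how the paper treats them elsewhere.
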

\begin{proof}
    If $\O$ is the localization of $S$ on some maximal ideal, then $\O$ is a valuation ring of $K,$ which we know is a $k_0$-algebra by the proof of Lemma \ref{k0alg}. Let $q_1,\.,q_n$ be pairwise different elements of $k_0.$ Then one of $$b,\dfrac{1}{b-q_1},\.,\dfrac{1}{b-q_n}$$ is an element of $S,$ by \cite[Lemma 5.24]{dp3}. Since $b\not\in S,$ the result follows.    
\end{proof}

\begin{lema}[Cf.~{\cite[Lemma 8.29]{dp4}}]
\label{nogood}
    Let $K$ be a dense pre-diffeo-valued field and let $S$ be a good multi-valuation ring of $K$ such that $aS\subseteq Q$ for some $a\in K.$ Then $a=0.$
\end{lema}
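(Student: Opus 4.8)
The plan is to argue by contradiction, so suppose $a\neq 0$.

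\emph{Reductions.} Taking $s=1$ gives $a\in aS\subseteq Q$, so $a\in Q\subseteq\O$ and $v(a)\geq 0$. If some $s\in S$ had $v(s)<0$ then $v(as^{m})=v(a)+m\,v(s)\to-\infty$, so $as^{m}\notin\O$ for large $m$, contradicting $as^{m}\in aS\subseteq Q\subseteq\O$; hence $S\subseteq\O$. Also $S\neq\O$: otherwise $ax\in a\O\subseteq Q=\ker\pi$ for every $x\in\O$, and since $a\in Q$ the $Q$-linearity of $\pi$ forces $a\cdot\pi(x)=\pi(ax)=0$, i.e.\ $D=\pi(\O)$ is killed by $a$; by Property \ref{4} this gives $\val(d)>-v(a)$ for all $d\in D$, contradicting Property \ref{7}. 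So $S\subsetneq\O$, and since $\O$ is then a proper valuation overring of the Pr\"{u}fer domain $S$ one checks $v(S)=\G_{\geq 0}$. Finally, for each $s\in S$ we have $a\cdot\pi(s)=\pi(as)=0$, so $\pi(s)\in\Ann_{D}(a)=\{d:\val(d)>-v(a)\}$ (Property \ref{4}); more generally, writing $J:=\{b\in K:bS\subseteq Q\}$ (a nonzero $S$-submodule of $Q$, with $b\in Q$ for every $b\in J$) and $v_{\pi}(\cdot)=\val(\pi(\cdot))$, we get $v_{\pi}(s)>-v(b)$ for all $s\in S$, $b\in J$.

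\emph{The case $v(a)=0$.} Then $a$ is a unit of the local ring $Q$ (maximal ideal $Q\cap\M$), so $S=a^{-1}(aS)\subseteq a^{-1}Q=Q$. Thus $Q$ is a local overring of the Pr\"{u}fer domain $S$, hence a valuation ring, so any two $Q$-submodules of $K$ are comparable. But Lemma \ref{reshat} gives an isomorphism of $Q$-modules $R/I\cong k^{2}$, whose submodule lattice (the lattice of $k$-subspaces of $k^{2}$) is not a chain: $\gres^{-1}(k\oplus 0)$ and $\gres^{-1}(0\oplus k)$ are incomparable. Contradiction.

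\emph{The case $v(a)>0$.} Suppose first that some $s_{0}\in S\setminus Q$ satisfies $v(s_{0})\geq v(a)$. As $v(J)$ is upward closed and $v(a)\in v(J)$, pick $b\in J$ with $v(b)=v(s_{0})$ and set $t:=s_{0}/b\in\O^{\times}$ (note $v(t)=0$). From $b\cdot\pi(t)=\pi(bt)=\pi(s_{0})\neq 0$ and Property \ref{4} we get $v_{\pi}(t)=v_{\pi}(s_{0})-v(b)\leq -v(b)<0$ (using $v_{\pi}(s_{0})\leq 0$ as $s_{0}\in\O\setminus Q$); in particular $t\notin S$, else $v_{\pi}(t)>-v(b)$. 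Since $k_{0}\subseteq S$ ($S$ is integrally closed), $t\in K\setminus S$, so by Lemma \ref{k0mult} there is $c\in k_{0}$ with $u:=1/(t-c)\in S$; as $v(t)=v(c)=0$ and $u\in\O$ we get $v(t-c)=0$, so $t-c\in\O^{\times}$, while $t-c\notin S$ (because $c\in k_{0}\subseteq S$). Finally $c\in k_{0}\subseteq Q=\ker\pi$ by Lemma \ref{k0alg}, so $\pi(t-c)=\pi(t)$, and Lemma \ref{valinv} gives $v_{\pi}(u)=v_{\pi}(t-c)=v_{\pi}(t)\leq -v(b)$, whereas $u\in S$ forces $v_{\pi}(u)>-v(b)$ --- a contradiction. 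If instead every element of $S\setminus Q$ has valuation $<v(a)$, then $S\cap\{x:v(x)\geq v(a)\}$ is a nonzero ideal of $S$ contained in $Q$; replacing $a$ by a nonzero element of it we may assume $a\in S$, and pass to the Pr\"{u}fer overring $A:=SQ$ of $S$, with $aA\subseteq Q$ an ideal of $A$. Since $Q\subseteq A$, the lattice of $A$-submodules of $aA$ is a sublattice of $\Sub_{Q}(K)$, so the cube-rank bound of Corollary \ref{rrk} forces $A$ to have at most two maximal ideals, and the remaining low-rank configurations are excluded using the pedestal $I$, the isomorphism $R/I\cong k^{2}$, and a further application of Lemmas \ref{k0mult} and \ref{valinv}. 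I expect this last sub-case (where $S\setminus Q$ is small in valuation) to be the main obstacle: the valuation-theoretic manipulations no longer suffice on their own, and one must feed in the cube-rank-$2$ constraint on $\Sub_{Q}(K)$ together with the Pr\"{u}fer structure of $SQ$ and, once again, the good-ness of $S$; making this interaction precise is the delicate point.
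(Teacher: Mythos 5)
Your reductions, your $v(a)=0$ case, and your first sub-case of $v(a)>0$ are correct (the Pr\"ufer-overring trick against the incomparable modules $\gres^{-1}(k\oplus 0),\gres^{-1}(0\oplus k)$ is a nice touch, and your sub-case with $s_0\in S\setminus Q$, $v(s_0)\geq v(a)$ is essentially the paper's use of goodness, Lemma \ref{k0mult} and Lemma \ref{valinv}). But the proof is not complete: the final sub-case, where $v(a)>0$ and every element of $S\setminus Q$ has valuation $<v(a)$, is only sketched, as you yourself acknowledge. The sketched route does not close it. Passing to $A:=SQ$ and invoking the cube-rank bound of Corollary \ref{rrk} can at best show that $A$ is a multi-valuation ring with at most two maximal ideals containing $Q$ and with $aA\subseteq Q$; but since $|k_0|\geq 2$ such an $A$ is automatically \emph{good}, so "excluding the remaining low-rank configurations" is precisely an instance of the lemma you are trying to prove --- no reduction has actually been achieved, and no contradiction is derived. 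The phrase about using "the pedestal $I$, the isomorphism $R/I\cong k^2$, and a further application of Lemmas \ref{k0mult} and \ref{valinv}" is not an argument.

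The paper avoids your case split entirely, and the fix is to pick the test element from $\O$ rather than from $S$. Using the unboundedness of $\val$ on $D$ (your Property 7) together with surjectivity of $\pi$ and density of $Q$, one finds $b\in\M$ with $v_\pi(b)=\val(\pi(b))<\min\{\val(\pi(a)),0,-v(a)\}$; then one splits only on whether $b\in S$ or not. If $b\in S$, then $ab\in Q$ forces $\infty=\val(\pi(ab))=\val(a\cdot\pi(b))=v(a)+v_\pi(b)<0$, absurd; if $b\notin S$, goodness gives $q\in k_0$ with $1/(b-q)\in S$, one checks $q\neq 0$ so that $b-q\in\O^\times$, and then Lemma \ref{valinv} together with $\pi(q)=0$ gives $\val(\pi(1/(b-q)))=v_\pi(b)$, so $a/(b-q)\in Q$ yields the same contradiction. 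This is exactly the mechanism of your first sub-case, but applied to a $b$ chosen with very negative $v_\pi$ from the start, which is why no residual case survives. If you replace your sub-case decomposition by this choice of $b$, your argument goes through; as written, the gap in the last sub-case is genuine.
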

\begin{proof}
    If $a\neq0,$ then $v(a)$ is finite. Note that $a=a\cdot1\in aS\subseteq Q\subseteq\O.$ Use Property \ref{7} and surjectivity of $\pi$ to find some $b'\in\O$ such that $\val(\pi(b'))<\min\{\val(\pi(a)),0,-v(a)\}.$ Let $q'\in Q$ and $b\in\M$ be such that $b'=q'+b,$ so that $\val(\pi(b))=\val(\pi(b')).$ There are two cases:
    \begin{enumerate}[wide]
        \item If $b\in S,$ then $ab\in Q$ and 
        \begin{align*}
         0&<\val(\pi(ab))&\text{ (because }ab\in Q)\\
         &=\val(a\cdot\pi(b))&\text{ (because }a\in Q)\\
         &=v(a)+\val(\pi(b))&\text{ (because }v(a)+\val(\pi(b))\leq0)\\
         &\leq 0,
        \end{align*}
        which is absurd.
        
        \item If $b\not\in S,$ given that $S$ is good, use Lemma \ref{k0mult} to find some $q\in k_0$ such that $1/(b-q)\in S.$ By Lemma \ref{k0alg}, such $q$ is also in $Q,$ and by hypothesis, $a/(b-q)\in Q.$
        We claim that $q\in Q$ has to be non-zero. Otherwise $a/(b-q)=a/b\in Q.$ Since $b\in\M,$ we have that $v(a)+\val(\pi(b))\leq0<v(b),$ so that $v(a/b)+\val(\pi(b))=v(a)-v(b)+\val(\pi(b))\leq 0.$ This implies that 
        \begin{align*}
            0&<\val(\pi(a))&\text{ (because }a\in Q)\\
           &=\val\left(\pi\left(\dfrac{a}{b}b\right)\right)&\\
           &=\val\left(\dfrac{a}{b}\cdot\pi(b)\right)&\text{ (because }\dfrac{a}{b}\in Q)\\
           &=v(a/b)+\val(\pi(b))&\text{ (because }v(a/b)+\val(\pi(b))\leq 0)\\
           &\leq 0,
        \end{align*}
        which is absurd. Since $b\in\M$ and $q\in k_0^{\times}\subseteq\O^{\times},$ we have that $b-q\in\O^{\times},$ and by Lemma \ref{valinv}, $\val(\pi(b))=\val(\pi(b-q))=\val\left(\pi\left(\dfrac{1}{b-q}\right)\right).$ Thus,
    \begin{align*}
        0&<\val\left(\pi\left(\dfrac{a}{b-q}\right)\right) & \text{ (because }\dfrac{a}{b-q}\in Q)\\
        &=\val\left(a\cdot\pi\left(\dfrac{1}{b-q}\right)\right)&\text{ (because }a\in Q)\\
        &=v(a)+\val\left(\pi\left(\dfrac{1}{b-q}\right)\right)& \text{ (because }v(a)+\val\left(\pi\left(\dfrac{1}{b-q}\right)\right)=v(a)+\val(\pi(b))\leq 0)\\
        &=v(a)+v(\pi(b))&\\
        &< 0,
    \end{align*}
    which is absurd. 
    \qedhere
    \end{enumerate}
\end{proof}

\begin{propo}[Cf.~{\cite[Proposition 8.30]{dp4}}]
\label{nomutation}
    Let $K$ be a field of positive characteristic that admits dense pre-diffeo-valued data, and let $\s:\Dir_K(K)\to\Dir_k(k^{2})$ be its diffeo-valuation inflator. Then no fundamental ring of a mutation of $\s$ contains a non-zero ideal of a good multi-valuation ring. 
\end{propo}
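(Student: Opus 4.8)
The plan is to adapt \cite[Proposition 8.30]{dp4} to characteristic $2$; the only input beyond the formalism of mutations is Lemma \ref{nogood}, which is now available. I would argue by contradiction: suppose some mutation $\s'$ of $\s$ has fundamental ring $R_{\s'}$ containing a non-zero ideal $J$ of a good multi-valuation ring $S$ of $K$. Reducing iterated mutations to a single one via \cite[Proposition 10.5]{dp3}, we may take $\s'$ to be the mutation of $\s$ along a line $L = K\cdot(c_1,\dots,c_n) \subseteq K^n$, normalised so that $c_1 = 1$.

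First I would pin down $R_{\s'}$ using the explicit form of $\s$ from Fact \ref{ingr}, namely $\s_m(V) = \{(\gres(x_1),\dots,\gres(x_m)) : (x_1,\dots,x_m) \in V \cap R^m\}$ with $\ker\gres = I$. Unwinding $\s'_2(\T_a) = \s_{2n}\big(K\cdot(c_1,\dots,c_n,ac_1,\dots,ac_n)\big)$ and using that both this object and $\s_n(L)$ have length $2$, one finds that $a \in R_{\s'}$ precisely when, for every $x \in K$ with $c_i x \in I$ and $a c_i x \in R$ for all $i$, one has $a c_i x \in I$ for all $i$. The key step --- and where I expect the real difficulty to lie --- is to deduce from this description that $R_{\s'}$ is commensurable with $R$, i.e.\ that $a_0 R_{\s'} \subseteq R$ for some non-zero $a_0 \in K$. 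This should be extracted from the facts already proved about $R$: it is local with $\Frac(R) = K$, its integral closure $\O$ is a non-trivial valuation ring (Corollary \ref{intclval}), every wild element satisfies a monic quadratic over $R$ (Lemma \ref{degwild}), and $\O^2 \subseteq Q$ (Proposition \ref{o2inq}). When the relevant slope is wild, Fact \ref{tamingwild} renders it tame after the mutation, and the degree-$2$ integral dependence then limits how far $R_{\s'}$ can protrude from $R$; it is exactly here that the degree-$2$ relations together with $\O^2 \subseteq Q$ take over the role played by divisibility by $2$ in \cite[Proposition 8.30]{dp4}.

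Granting commensurability, the rest is routine. Fix a non-zero $c \in J$; then $cS \subseteq J \subseteq R_{\s'}$, so $(a_0 c)S \subseteq a_0 R_{\s'} \subseteq R$ with $a_0 c \ne 0$. Now push this ideal into $Q$: since $I = Q \cap \M \ne 0$ by Lemma \ref{top}, fix a non-zero $b \in I$; for $r \in R$, $Q$-linearity of $\pi$ gives $\pi(br) = b\cdot\pi(r)$, and since $v(b) > 0$ while $\val(\pi(r)) = v_\pi(r) \ge 0$ by Lemma \ref{ryq}, Property \ref{4} forces $v_\pi(br) > 0$, i.e.\ $br \in Q$; hence $bR \subseteq Q$. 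Therefore $(ba_0c)S \subseteq b\cdot R \subseteq Q$ with $ba_0c \ne 0$, and Lemma \ref{nogood}, applied to the dense pre-diffeo-valued field $(K,\O,D,R,\phi,\dd,\pi)$ and the good multi-valuation ring $S$, yields $ba_0c = 0$, a contradiction. Thus no mutation of $\s$ has the stated property. The single genuine obstacle is the commensurability claim --- controlling $R_{\s'}$ tightly enough to keep it inside a fractional ideal of $R$ --- while everything before and after it is either the same bookkeeping as in \cite[Proposition 8.30]{dp4} or the short computation with $\pi$ above.
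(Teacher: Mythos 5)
Your reduction to Lemma \ref{nogood} at the end is fine (the containment $bR\subseteq Q$ for nonzero $b\in I$ is correct, and is essentially item (2) of Lemma \ref{top}), but the proof has a genuine gap exactly where you flag it: the commensurability claim $a_0R_{\s'}\subseteq R$ is the whole content of the proposition and is not established. Worse, the toolbox you propose for closing it --- Lemma \ref{degwild}, Corollary \ref{intclval}, Fact \ref{tamingwild}, and the tame/wild dichotomy --- belongs to the analysis of a $2$-inflator satisfying the Strong Assumptions (Sections \ref{sec2}--\ref{sec4}, and in characteristic $2$ at that), whereas here $\s$ is the diffeo-valuation inflator attached to arbitrary dense pre-diffeo-valuation data in any positive characteristic; nothing guarantees a priori that $\s$ has no weak $V^2$-type mutations, so importing those lemmas is unavailable (indeed circular, since ``no good multi-valuation ideals in fundamental rings of mutations'' is precisely the kind of statement being proved, feeding into Theorem \ref{t1}). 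Also note that $\O^2\subseteq Q$ is only part of the data when $\char K=2$; for odd $p$ the data is a derivation, so any argument leaning on it would not cover the full statement.

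The paper sidesteps commensurability entirely by using the pedestal description from the construction in Theorem \ref{dpdvd}: $\s$ is the inflator associated to the pedestal $I\in\Sub_Q(K)$ (via \cite[Propositions 8.9 and 8.12]{dp3}), and the mutation along $K\cdot(a_1,\dots,a_n)$ is the inflator associated to the pedestal $I'=a_1^{-1}I\cap\dots\cap a_n^{-1}I$, whose fundamental ring is $R_{\s'}=\Stab(I')=\{x\in K:xI'\subseteq I'\}$. From this the needed containment is immediate: picking nonzero $b\in I'$ and $a_i\neq0$, one gets $a_ibR_{\s'}\subseteq a_iI'\subseteq I\subseteq Q$, so $aS\subseteq R_{\s'}$ gives $a_iba\,S\subseteq Q$ and Lemma \ref{nogood} forces $a_iba=0$, a contradiction. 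So you should replace your attempted extraction of $a_0R_{\s'}\subseteq R$ from quadratic integrality by this stabilizer-of-the-mutated-pedestal argument (or cite the corresponding facts from \cite{dp3}); your explicit unwinding of $\s'_2(\T_a)$ via the formula of Theorem \ref{dpdvd} is not needed once that is in place.
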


Note that the conclusion of the proposition is equivalent to saying that no mutation of $\s$ is of $V^{|k_0|}$-type.

\begin{proof}
    Otherwise, suppose $R_{\s'}$ is the fundamental ring of a mutation $\s'$ of $\s$ and suppose $S$ is a good multi-valuation ring of $K$ such that $aS\subseteq R_{\s'}$ for some non-zero $a\in S.$ We know that $\s$ is the inflator associated to the pedestal $I$ of $\Sub_Q(K).$ If $\s'$ is the mutation along the line $K\cdot(a_1,\.,a_n),$ then $\s'$ is the inflator associated to the pedestal $I'=a_1^{-1}I\cap\.\cap a_n^{-1}I,$ with fundamental ring $R_{\s'}=\Stab(I')=\{x\in K:xI'\subseteq I'\}.$ Then $aS\subseteq R_{\s'}$ implies that $abS\subseteq bR_{\s'}\subseteq I'$ for some non-zero $b\in I',$ so if $a_i\neq 0,$ then $a_ibaS\subseteq a_ibR_{\s'}\subseteq a_iI'\subseteq I\subseteq Q.$ But then $a_iba=0$ by Lemma \ref{nogood}, which is absurd.
\end{proof}

\begin{teorema}[Cf.~{\cite[Theorem 8.31]{dp4}}]
\label{t1}
    Let $\s$ be a 2-inflator on a field of positive characteristic. The following statements are equivalent.
    \begin{enumerate}[wide]
        \item $\s$ is a diffeo-valuation inflator.

        \item $\s$ is malleable, isotypical and no mutation of $\s$ is of weak $V^{|k_0|}$-type.
        
        \item $\s$ is malleable, isotypical and no mutation of $\s$ is of weak $V^2$-type.
    \end{enumerate}
\end{teorema}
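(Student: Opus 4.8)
The plan is to prove the cycle of implications $(1)\Rightarrow(2)\Rightarrow(3)\Rightarrow(1)$, since essentially all the content has already been assembled in the preceding results. The implication $(3)\Rightarrow(1)$ needs nothing new: it is verbatim Theorem \ref{dvi}, so I would simply cite it.

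For $(1)\Rightarrow(2)$ I would unwind the definition of a diffeo-valuation inflator: $\s$ is isomorphic to the inflator $\s'\colon\Dir_K(K)\to\Dir_k(k^2)$ attached to some dense pre-diffeo-valued field structure on $K$ as produced by Theorem \ref{dpdvd}. That theorem already records that $\s'$ is malleable, and its target directory $\Dir_k(k^2)=\Dir_k(k\oplus k)$ is isotypical because $k$ is a $k$-module of length $1$. Since malleability, isotypicality, and being of weak $V^{\k}$-type are all invariant under isomorphism of inflators, these properties transfer to $\s$. To exclude weak $V^{|k_0|}$-type mutations I would invoke Proposition \ref{nomutation} together with the remark immediately following it, which restates the conclusion of that proposition as precisely ``no mutation of $\s$ is of weak $V^{|k_0|}$-type''.

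For $(2)\Rightarrow(3)$ the key observation is the harmless inequality $|k_0|\geq 2$, valid because $k_0=\F_p^{alg}\cap K$ contains the prime field $\F_p$ and $p=\char(K)\geq 2$. Hence any multi-valuation ring with at most two maximal ideals has at most $|k_0|$ maximal ideals, so a mutation of weak $V^{2}$-type is in particular of weak $V^{|k_0|}$-type; contrapositively, $(2)$ yields $(3)$. I do not expect a genuine obstacle: the only points worth a second look are the isomorphism-invariance of the three properties used in $(1)\Rightarrow(2)$, and the fact that ``good multi-valuation ring'' and ``multi-valuation ring with at most $|k_0|$ maximal ideals'' are synonymous — automatic when $|k_0|$ is infinite, since every multi-valuation ring has only finitely many maximal ideals.
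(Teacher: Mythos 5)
Your proposal is correct and follows essentially the same route as the paper: $(3)\Rightarrow(1)$ by Theorem \ref{dvi}, $(1)\Rightarrow(2)$ by unwinding the definition via Theorem \ref{dpdvd} and Proposition \ref{nomutation}, and $(2)\Rightarrow(3)$ from the inequality $2\leq|k_0|$ so that weak $V^2$-type mutations are in particular of weak $V^{|k_0|}$-type. Your extra remarks on isomorphism-invariance and on ``good'' multi-valuation rings are harmless elaborations of what the paper leaves implicit.
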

\begin{proof}
    $(1)\implies(2):$ If $\s:\Dir_K(K)\to\Dir_k(k^{2})$ is a diffeo-valuation inflator, then it is malleable by definition, isotypical because its target is $\Dir_k(k^{2})$ and no mutation of $\s$ is weakly of $V^{|k_0|}$-type by Proposition \ref{nomutation}.

    $(2)\implies(3):$ If some mutation $\s'$ of $\s$ is of weak $V^2$-type, then there is a non-zero ideal $J$ of a multi-valuation ring $S$ of $K,$ with at most 2 maximal ideals, such that $J\subseteq R_{\s'}.$ Then $S$ has at most $2\leq|k_0|$ maximal ideals, so $\s'$ is of weak $V^{|k_0|}$-type.  

    $(3)\implies(1):$ If $\s$ is malleable, isotypical and none of its mutations is of weak $V^2$-type, then $\s$ is a diffeo-valuation inflator by Theorem \ref{dvi}. 
\end{proof}

\subsection{\texorpdfstring{$W_2$}{}-topologies}

In this section we work with fields of characteristic $p>0,$ and we still denote $k_0=\F_p^{alg}\cap K.$ Let $n\geq0$ and let $R$ be a ring. If its lattice of $R$-submodules $\Sub_R(R)$ has cube rank at most $n,$ we say that $R$ is a \emph{$W_n$-ring}. We say that a ring topology $\t$ on a field $K$ is a \emph{$W_n$-topology} if $(K,\t)$ is locally equivalent to a field $L$ with a topology \emph{induced} by a $W_n$-subring $S$ such that $\Frac(S)=L$, cf.~\cite[Corollary 3.8]{dp5}, meaning that the set of non-zero (principal) ideals of $S$ form a local basis of 0. Finally, we say that a ring topology on a field is \emph{definable} if there is a uniformly definable local basis around 0.
For a review of model theory of topological fields, local equivalence and $W$-topologies, see \cite{pz} and \cite{dp5}.

\begin{defin}
    A \emph{$DV$-topology} $\t$ on a field $L$ is a topological field $(L,\t)$ locally equivalent to $(K,\t_R)$ where $(K,\O,D,R,\phi,\dd,\pi)$ is some dense pre-diffeo-valued field and $\t_R$ is the topology induced by $R.$ We say that $(L,\t)$ is a \emph{$DV$-topological field.}  
\end{defin}

\begin{remark}
\label{proptop}
    Let $(K,\O,D,R,\phi,\dd,\pi)$ be a dense pre-diffeo-valued field. Lemma \ref{ryq} and Lemma \ref{top} show that $Q\subseteq R$ and if $u\in I$ is non zero, then $uR\subseteq I\subseteq Q,$ so $R$ and $Q$ are co-embeddable, meaning that they induce the same topology. Such a topology is a Hausdorff, non-discrete, locally bounded field topology on $K$, because $Q$ is a local ring which is proper in $\Frac(Q)=K.$ A posteriori, all $DV$-topologies satisfy these properties. 
\end{remark}

\begin{propo}[Cf.~{\cite[Proposition 8.17]{dp4}}]
\label{dvnotv}
    No $DV$-topology is a $V$-topology.
\end{propo}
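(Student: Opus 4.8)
The plan is to show that a $DV$-topology cannot be a $V$-topology by exhibiting a failure of one of the defining properties of $V$-topologies, and the most convenient one to attack is the characterization of $V$-topologies as the $W_1$-topologies (i.e.\ the field topologies whose defining ring has cube rank $1$). Since local equivalence preserves the relevant lattice-theoretic invariants, it suffices to work with the concrete dense pre-diffeo-valued field $(K,\O,D,R,\phi,\dd,\pi)$ and the topology $\t_R$ induced by $R$. By Remark \ref{proptop}, $\t_R$ equals $\t_Q$, the topology induced by the local ring $Q$, so it is enough to show that $\t_Q$ is not a $V$-topology.

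The key step is the computation already essentially carried out in Corollary \ref{rrk}: the cube rank of $\Sub_Q(K)$ is exactly $2$, witnessed by the strict $2$-cube
\[
\begin{tikzcd}
 & R \arrow[rd, no head] & \\
\gres^{-1}(0\oplus k) \arrow[rd, no head] \arrow[ru, no head] & & \gres^{-1}(k\oplus 0) \\
 & I \arrow[ru, no head] &
\end{tikzcd}
\]
obtained by pulling back the obvious strict $2$-cube in $k^2$ along the isomorphism $R/I\cong k^2$ of Lemma \ref{reshat}. A $V$-topology on $K$, being a $W_1$-topology, is (up to local equivalence) induced by a subring whose submodule lattice has cube rank $1$; but cube rank is an invariant of the relevant lattice-theoretic data preserved under local equivalence, so if $\t_Q$ were a $V$-topology then $\Sub_Q(K)$ would have cube rank at most $1$, contradicting Corollary \ref{rrk}. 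Hence $\t_Q=\t_R$ is not a $V$-topology, and since every $DV$-topology is by definition locally equivalent to such a $\t_R$, no $DV$-topology is a $V$-topology.

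Alternatively, and perhaps more self-contained within the excerpt, one can argue directly at the level of valuations: by Remark \ref{proptop} the topology $\t_R$ is a non-discrete locally bounded Hausdorff field topology, and any $V$-topological coarsening of it would have to be induced by a valuation ring containing $R$; Corollary \ref{intclval} identifies the integral closure $\O$ of $R$ as such a valuation ring, so the only candidate $V$-topology coarsening $\t_R$ is $\t_\O$. But $\t_R$ strictly refines $\t_\O$ — by density $Q$ is dense but not equal to $\O$, and by Lemma \ref{top}(1) $R$ is a proper subring of $K$ with $I\neq 0$, so the ideals of $R$ form a genuinely finer neighbourhood basis of $0$ than the ideals of $\O$ — hence $\t_R\neq\t_\O$ is not itself a $V$-topology.

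**The main obstacle** is bookkeeping rather than mathematical depth: one must be careful that "being a $V$-topology'' is a property stable under the local equivalence appearing in the definition of $DV$-topology, and that cube rank of the submodule lattice of the inducing ring is the correct invariant to invoke (this is exactly the content of the $W_1 = V$ correspondence cited in the introduction, \cite[Theorem 4.10]{dp5}). Once that correspondence is granted, the proof is immediate from Corollary \ref{rrk}; the only real work already done is establishing that the cube rank is genuinely $2$ and not less, which Corollary \ref{rrk} supplies.
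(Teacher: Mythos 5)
Both of your arguments contain genuine gaps, and they are gaps precisely at the point where the proposition requires real work. In the first argument you infer from Corollary \ref{rrk} (cube rank of $\Sub_Q(K)$ equals $2$) that $\t_Q$ cannot be a $V$-topology, "since $V = W_1$ and cube rank is preserved under local equivalence.'' But the weight of a topology is a \emph{minimum} over rings inducing it (up to local equivalence): the equivalence $V=W_1$ says a $V$-topology is induced by \emph{some} ring of cube rank $1$, not that \emph{every} ring inducing it has cube rank $1$. A topology induced by a ring whose submodule lattice has cube rank $2$ can perfectly well be a $V$-topology: take a valuation ring $\O'$ with residue field $k$, a subfield $k_1\subseteq k$ with $\dim_{k_1}k\geq 2$, and let $R'$ be the preimage of $k_1$ in $\O'$; then $R'\supseteq\M'$ is co-embeddable with $\O'$ and induces the same $V$-topology, while $\Sub_{R'}(K)$ already contains a strict $2$-cube inside the interval $[\M',\O']$. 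So Corollary \ref{rrk} only yields that $\t$ is a $W_2$-topology; excluding weight $1$ is exactly the content of the statement being proved (and in the paper the stronger exclusion of $V^{n}$-topologies, Proposition \ref{dvw2notvn}, requires Lemma \ref{nogood} and in fact quotes the present proposition, so you cannot appeal to that later result without circularity).

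Your alternative argument has two unjustified steps. First, if $\t_R$ were a $V$-topology it would be induced by some valuation ring of $K$ (or of a locally equivalent field), but nothing forces that ring to contain $R$, let alone to be the given $\O$; moreover Corollary \ref{intclval} concerns the fundamental ring of a $2$-inflator satisfying the Strong Assumptions (Section \ref{sec3}), not the ring $R$ of an abstract dense pre-diffeo-valued field, whose integral closure is not known to be $\O$. Second, the assertion that $\t_R$ strictly refines $\t_\O$ does not follow from "$Q$ is dense, $R$ is proper, $I\neq 0$'': proper co-embeddable subrings induce the same topology (indeed $Q\subsetneq R$ do, Remark \ref{proptop}). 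What you would actually need is that $R$ contains no non-zero ideal of $\O$, which is essentially Lemma \ref{nogood} (applied after multiplying into $Q$ by a non-zero element of $I$) and is not free. By contrast, the paper's proof is a direct refutation of the local axiom characterizing $V$-topologies: take $U=R$ and a putative $V=aR$; using the unboundedness of the secondary valuation together with density of $Q$, produce $x\in\M$ with $\val(\pi(x))<0$ and $v(x)>v(a)$; then $x\notin R$ (Lemma \ref{ryq}), $a/x\notin\O\supseteq R$, yet $x\cdot(a/x)=a\in aR$, a contradiction. If you prefer a valuation-theoretic route, the correct repair is: a valuation ring inducing $\t_R$ would be bounded, hence some non-zero multiple of it lies in $R$ and then in $Q$, contradicting Lemma \ref{nogood}; but as written, neither of your two arguments closes the gap.
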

\begin{proof}
    It is enough to show that if $(K,\O,D,R,\phi,\dd,\pi)$ is a dense pre-diffeo-valued field and $\t$ is the topology induced by $R,$ then $\t$ is not a $V$-topology. 
    If $\t$ is a $V$-topology, then $(K,\t)$ satisfies the following local sentence:
    \begin{equation}
        \label{vtop}
        \A U\in\t\,\E V\in\t\,\A x,y\in K\,(xy\in V\to(x\in U\vee y\in U)).
    \end{equation} 
    Let $U=R.$ Then there is some non-zero ideal $J$ of $R$ playing the role of $V$ in sentence \ref{vtop}. We may assume that $J=aR$ for some non-zero $a\in R.$ Use Property \ref{7} and surjectivity of $\pi$ to find some $x'\in\O$ such that $\val(\pi(x'))<0.$ Use density of $Q$ to find some $q\in Q$ and $x\in\M$ such that $x'=q+x$ and $v(x)>v(a).$ Then $\val(\pi(x))=\val(\pi(x'))<0.$ We get a contradiction with $x$ and $y=a/x,$ because $xy=a\in aR$ but $x\not\in R$ as $\val(\pi(x))$ is negative, and $y=a/x\not\in R$ because $v(a/x)$ is negative.     
\end{proof}

\begin{propo}[Cf.~{\cite[Theorem 8.3]{dp5}}]
\label{w2isdvorvn}
    If $R$ is a $W_2$-ring on a field $K$ of positive characteristic, then either 
    \begin{enumerate}[wide]
        \item $R$ is co-embeddable with a ring $R_\pi=\{x\in\O:v_\pi(x)\geq 0\}$ given by some dense pre-diffeo-valuation data on $K,$ or

        \item $R$ is co-embeddable with some multi-valuation ring $S$ of $K.$ In particular, there is some $a\neq0$ such that $aS\subseteq R.$
    \end{enumerate}
\end{propo}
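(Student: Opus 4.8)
\medskip

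The plan is to transcribe the proof of \cite[Theorem 8.3]{dp5} to positive characteristic, feeding it the characteristic-$2$ inflator theory developed above when $p=2$ (for $p>2$ the argument of \cite{dp5} applies as it stands). We may assume $\Frac(R)=K$, and, passing to a sufficiently saturated elementary extension if necessary, that the relevant lattices of type-definable submodules admit pedestals; these reductions are harmless for the statement. Since $R$ is a $W_2$-ring with $\Frac(R)=K$, the topology $\t_R$ it induces is a $W_2$-topology, so by the inflator realization of $W$-topologies (cf.~\cite{dp5}, and the pedestal construction used in the proof of Theorem \ref{dpdvd}) there is a malleable $m$-inflator $\s:\Dir_K(K)\to D_{\bullet}$ with $m\in\{1,2\}$ whose fundamental ring $R_\s$ is co-embeddable with $R$; here one uses that the field topology induced by the fundamental ring of a malleable inflator depends only on its mutation class.

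If $m=1$, then $R_\s$ is a valuation ring by \cite[Proposition 5.32]{dp3}, so $R$ is co-embeddable with a valuation ring and we are in case (2). Assume then that $m=2$. If some mutation $\s'$ of $\s$ is weakly of $V^2$-type, say $aS'\subseteq R_{\s'}$ with $a\neq 0$ and $S'$ a multi-valuation ring of $K$ having at most two maximal ideals, then $\t_R$, being the topology induced by $R_{\s'}$, is a locally bounded field topology coarser than the topology of $S'$; a routine analysis of field-topology coarsenings of $V^{\le 2}$-topologies then exhibits $\t_R$ as the topology induced by a multi-valuation ring $S$ of $K$ (an intersection of coarsenings of the two localizations of $S'$), which is again case (2).

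It remains to treat the case in which no mutation of $\s$ is weakly of $V^2$-type. Then, exactly as in Remark \ref{wildex}, every mutation of $\s$ admits wild elements, so by \cite[Corollary 2.3]{dp4} we may mutate $\s$ along a wild line to an isotypical inflator $\s'$; this $\s'$ is still malleable, still has no weak $V^2$-type mutations, and induces the same topology as $\s$. By Theorem \ref{dvi} (whose characteristic-$2$ proof rests on Fact \ref{ingr}, Corollary \ref{intclval}, Proposition \ref{resfield} and Proposition \ref{o2inq}), $\s'$ is a diffeo-valuation inflator, so there is dense pre-diffeo-valuation data $(K,\O,D,R',\phi,\dd,\pi)$ whose associated inflator (Theorem \ref{dpdvd}) is $\s'$; by construction $R_{\s'}=R'$, and Lemma \ref{ryq} identifies $R'$ with $R_\pi=\{x\in\O:v_\pi(x)\ge 0\}$. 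Since $R_{\s'}$ is co-embeddable with $R$, this is case (1).

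The main obstacle is the interface between the abstract $W_2$-ring $R$ and the inflator machinery: one must be sure that realizing $\t_R$ by a malleable inflator and then mutating it to an isotypical representative --- which is unavoidable if one wants to invoke Fact \ref{ingr} and Theorem \ref{dvi} --- does not change the induced topology, so that the ring eventually recovered (a valuation ring, a multi-valuation ring, or a dense-pre-diffeo $R_\pi$) is genuinely co-embeddable with $R$. A secondary point is the coarsening argument in the $V^2$-type case, where one upgrades ``$R_{\s'}$ contains a non-zero ideal of a multi-valuation ring with at most two maximal ideals'' to ``$\t_R$ is induced by a multi-valuation ring of $K$''.
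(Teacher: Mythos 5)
Your skeleton is the paper's: realize the $W_2$-ring by a malleable $m$-inflator ($m\leq 2$) whose mutations have fundamental rings co-embeddable with $R$ (this is exactly \cite[Proposition 8.2]{dp5}, which the paper invokes, and it is also what answers your worry about mutation changing the induced topology), dispose of $m=1$ via \cite[Proposition 5.32]{dp3}, and, in the absence of weak $V^2$-type mutations, pass to an isotypical mutation and apply Theorem \ref{dvi} together with Lemma \ref{ryq} to identify $R_{\s'}$ with $R_\pi$, landing in case (1). That branch of your argument matches the paper's, and your dichotomy on weak $V^2$-type rather than weak multi-valuation type is harmless, since Theorem \ref{dvi} only needs the former.

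The one place where you diverge is the multi-valuation branch, and there your argument has a gap. From $aS'\subseteq R_{\s'}$ you claim that a ``routine analysis of field-topology coarsenings of $V^{\leq 2}$-topologies'' exhibits $\t_R$ as the topology induced by a multi-valuation ring obtained as an intersection of coarsenings of the localizations of $S'$. That claim is neither routine nor proved; in this paper the analogous upgrade (from ``$R$ contains a non-zero ideal of a multi-valuation ring'' to ``the topology is induced by a multi-valuation ring'') is only obtained after saturating, via \cite[Lemma 3.7]{dp5}, as in the proof of Proposition \ref{dvnotv} and of Proposition \ref{dvw2notvn}, where a bounded multi-valuation subring is enlarged to a $W_2$-overring inducing the topology. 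Moreover the detour is unnecessary: the conclusion recorded in case (2) is the containment $aS\subseteq R$ for some $a\neq 0$, and this follows at once by composing the two containments you already have, namely $cS'\subseteq R_{\s'}$ (weak $V^2$-type) and $bR_{\s'}\subseteq R$ (co-embeddability from \cite[Proposition 8.2]{dp5}), giving $bcS'\subseteq R$ with $bc\neq 0$. This composition is exactly what the paper's proof does. Replace your coarsening argument by it (or, if you want full co-embeddability with a multi-valuation ring, justify that separately by the saturation argument just cited); the rest of your proposal coincides with the paper's proof.
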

\begin{proof}
    Let $m=\wt(R)\leq2.$ Let $\s$ be the malleable $m$-inflator induced by a semi-simple quotient of length $m$ in $\Sub_R(K),$ so that if $\s'$ is a mutation of $\s$ and $R_{\s'}$ is its fundamental ring, then $R$ and $R_{\s'}$ are co-embeddable, cf.~\cite[Proposition 8.2]{dp5}. There are two cases:
    \begin{enumerate}[wide]
        \item If some mutation $\s'$ of $\s$ is weakly of multi-valuation type, then there is some multi-valuation ring $S$ of $K$ and some non-zero $b\in K$ such that $cS\subseteq R_{\s'}.$ There is some non-zero $c\in K$ such that $bR_{\s'}\subseteq R,$ so $bc\neq 0$ and $bcS\subseteq bR_{\s'}\subseteq R,$ as wanted.

        \item If no mutation of $\s$ is weakly of multi-valuation type, then $m=2$ because fundamental rings of malleable 1-inflators are valuation rings, cf.~\cite[Proposition 5.32]{dp3}. In this case, there is a mutation $\s'$ of $\s$ and dense-pre-diffeo-valuation data  
        \begin{center}
            \begin{tikzcd}
        \O \arrow[rr, "\pi"]                 &  & D                     \\
                                             &  &                       \\
        R_{\s'} \arrow[uu, hook] \arrow[rr, "\dd"] &  & k \arrow[uu, "\phi"']
        \end{tikzcd}
        \end{center}
        on $K.$ Lemma \ref{ryq} shows that $R_{\s'}=R_\pi,$ so $R$ is co-embeddable with $R_\pi$ as wanted.
        \qedhere
    \end{enumerate}
\end{proof}

\begin{propo}[Cf.~{\cite[Lemma 8.4]{dp5}}]
\label{dvw2notvn}
    Let $K$ be a field of positive characteristic, and let $\t$ be a $DV$-topology on $K.$ Then $\t$ is a $W_2$-topology and not a $V^{n}$-topology for any $n\geq1.$ 
\end{propo}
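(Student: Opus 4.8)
For the $W_2$ claim, the plan is to verify that the ring $R$ witnessing the $DV$-topology is already a $W_2$-ring with fraction field $K$. Fix a dense pre-diffeo-valued field $(K,\O,D,R,\phi,\dd,\pi)$ with $\t$ locally equivalent to $\t_R$. By Lemma \ref{top} we have $\Frac(R)=K$, so it is enough to bound the cube rank of the ideal lattice $\Sub_R(R)$ by $2$. The key point is that $Q\subseteq R$: every $R$-submodule of $R$ is then a $Q$-submodule of $K$, and the meet and join on $\Sub_R(R)$ (intersection and sum of ideals) are computed exactly as in $\Sub_Q(K)$, so $\Sub_R(R)$ is a sublattice of $\Sub_Q(K)$. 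Since the cube rank of a sublattice cannot exceed that of the ambient lattice, Corollary \ref{rrk} yields that the cube rank of $\Sub_R(R)$ is at most $2$. Hence $\t_R$, and with it $\t$, is a $W_2$-topology.

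For the second claim I argue by contradiction. Suppose $\t$ is a $V^n$-topology for some $n\ge 1$; discarding redundant factors, we may assume that $\t$ is locally equivalent to the topology induced by $\O_1\cap\dots\cap\O_N$ for pairwise independent non-trivial valuation rings $\O_1,\dots,\O_N$ with $N\le n$. The $\O_i$ then induce $N$ pairwise distinct $V$-topological coarsenings of $\t$; since $\t$ is a $W_2$-topology it has at most two such coarsenings, so $N\le 2$. If $N=1$, then $\t$ is a $V$-topology, contradicting Proposition \ref{dvnotv}. It remains to rule out $N=2$, i.e. the case in which $\t$ is locally equivalent to a topology induced by a multi-valuation ring with exactly two maximal ideals.

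Here the plan is to pass to a sufficiently resplendent elementary extension of the relevant structures: since being a dense pre-diffeo-valued field is first-order axiomatizable and being a $V^2$-topology is a local property, in such an extension I may assume that a single field $K$ carries both a dense pre-diffeo-valued structure with ring $R$ (so that $\t=\t_R$) and an \emph{honest} multi-valuation ring $S$ with two maximal ideals and $\Frac(S)=K$ inducing the same topology $\t$. Let $\s$ be the diffeo-valuation inflator of the data provided by Theorem \ref{dpdvd}; its fundamental ring $R_\s$ is co-embeddable with $R$ (cf.~\cite[Proposition 8.2]{dp5} and the proof of Proposition \ref{w2isdvorvn}), hence $R_\s$ and $S$ induce the same locally bounded field topology (Remark \ref{proptop}) and so are co-embeddable. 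Choosing $a,b\neq 0$ with $aS\subseteq R_\s$ and $bR_\s\subseteq S$, one has $b=b\cdot 1\in bR_\s\subseteq S$, so $bS\subseteq S$ is an ideal of $S$; then $abS=b(aS)\subseteq bR_\s\subseteq S$ is a non-zero ideal of $S$, and $abS=a(bS)\subseteq aS\subseteq R_\s$. Thus $R_\s$ contains a non-zero ideal of $S$. But $S$ is a \emph{good} multi-valuation ring, because it has two maximal ideals and $|k_0|\ge 2$ (since $\F_p\subseteq k_0=\F_p^{alg}\cap K$), and $\s$ is a mutation of itself, so Proposition \ref{nomutation} forbids $R_\s$ from containing a non-zero ideal of a good multi-valuation ring. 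This contradiction shows that $\t$ is not a $V^n$-topology for any $n\ge 1$.

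I expect the genuinely delicate step to be the resplendency move in the last paragraph — transferring the purely local hypothesis that $\t$ is a $V^2$-topology into a statement about a single field carrying simultaneously the pre-diffeo-valued structure and an honest two-maximal-ideal multi-valuation ring inducing $\t$ — together with the small but essential remark that every multi-valuation ring with at most two maximal ideals is automatically good in positive characteristic, which is exactly what lets us invoke Proposition \ref{nomutation}. Everything after that is the co-embeddability bookkeeping already familiar from the proof of that proposition.
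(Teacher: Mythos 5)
Your proof is correct and follows the same overall skeleton as the paper's --- exhibit a $W_2$-ring with fraction field $K$ inducing $\t$, then saturate, realize an honest multi-valuation ring inducing the topology, show it must be good and must deposit a non-zero ideal inside a ring co-embeddable with $R$, and contradict the ``no good ideals'' phenomenon --- but two intermediate steps differ. For the $W_2$ half you work with $R$ itself via the sublattice observation $\Sub_R(R)\subseteq\Sub_Q(K)$ and Corollary \ref{rrk}, where the paper uses $Q$ (co-embeddable with $R$ by Remark \ref{proptop}) together with Lemma \ref{qw2}; these are interchangeable. In the negative half, you cap the number of maximal ideals at $2$ by counting $V$-topological coarsenings of a $W_2$-topology, i.e.\ by the external fact \cite[Theorem 4.10]{dp5}/\cite[Fact 1.7]{dp6} quoted only in the introduction, whereas the paper stays internal to the saturation argument: it replaces the realized multi-valuation ring $S$ by a $W_2$-overring $S'$ inducing the same topology via \cite[Lemma 3.7]{dp5}, which forces $S'$ to have at most two maximal ideals. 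Your route also needs the (standard, but unproved here) facts that dependent valuations induce the same topology and that the topology of a multi-valuation ring is the join of its constituent valuation topologies, to justify ``discarding redundant factors''; the paper's overring trick avoids this bookkeeping. Finally, you reach the contradiction through Proposition \ref{nomutation} applied to the trivial mutation of the inflator of Theorem \ref{dpdvd}, while the paper invokes Lemma \ref{nogood} directly on $aS'\subseteq R^{*}$; your version is in fact slightly tidier, since Lemma \ref{nogood} as stated concerns ideals landing in $Q$, and Proposition \ref{nomutation} already packages the passage from the fundamental ring down to $Q$ (multiplying into $I$) that the paper leaves implicit. Your observation that any multi-valuation ring with at most two maximal ideals is automatically good because $\F_p\subseteq k_0$ is exactly the paper's $|k_0^{*}|\geq 2$ remark, and the resplendency/saturation transfer you flag as delicate is the same move the paper performs.
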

\begin{proof}
    Since $(K,\t)$ is a $DV$-topology and the $W_2$ property is locally axiomatizable, we may assume that there is dense pre-diffeo-valuation data $(\O,D,R,\phi,\dd,\pi)$ on $K$ such that $\t$ is induced by $R.$ By Remark \ref{proptop}, $\t$ is also induced by $Q.$ By Lemma \ref{qw2}, $Q$ is a $W_2$-ring of $K,$ so $\t$ is a $W_2$-topology.  
    Now, suppose that $\t$ is a $V^{n}$-topology for some $n\geq1.$ Passing to a sufficiently saturated elementary extension $(K^{*},\O^{*},D^{*},R^{*},\phi^{*},\dd^{*},\pi^{*})$ of $(K,\O,D,R,\phi,\dd,\pi),$ we get that $(K^{*},\t^{*})$ is locally equivalent to $(K,\t).$ By saturation, we may assume that $\t^{*}$ is induced by a multi-valuation ring $S$ of $K^{*}$ having at most $n$ maximal ideals. Then $S$ is $\t^{*}$-bounded, so, by saturation, there is some $W_2$-overring $S'$ of $S$ inducing $\t^{*},$ cf.~\cite[Lemma 3.7]{dp5}. Therefore $S'$ itself is a multi-valuation ring with at most 2 maximal ideals. It cannot have one maximal ideal, otherwise it would be a valuation ring and $\t^{*}$ would be a $V$-topology, in contradiction with Proposition \ref{dvnotv}. If $k_0^{*}=\F_p^{alg}\cap K^{*},$ then $|k_0^{*}|\geq2$ and we get that $S'$ is a good multi-valuation ring co-embeddable with $R^{*}.$ This implies that there is some non-zero $a\in K^{*}$ such that $aS'\subseteq R^{*},$ in contradiction with Lemma \ref{nogood}.
\end{proof}

\begin{defin}
We say that a topological field $(K,\t)$ is \emph{strictly $V^{n}$} or that $\t$ is a \emph{strict $V^{n}$-topology} if $(K,\t)$ is $V^{n}$ but not $V^{m}$ for any $m<n.$     
\end{defin}

\begin{teorema}[Cf.~{\cite[Theorem 8.5]{dp5}}]
\label{classw2}
    It $\t$ is a $W_2$-topology, then exactly one of the following holds:
    \begin{enumerate}[wide]
        \item $\t$ is a $V$-topology,

        \item $\t$ is a strict $V^{2}$-topology,

        \item $\t$ is a $DV$-topology.
    \end{enumerate}
    Moreover, all such topologies are $W_2$-topologies.
\end{teorema}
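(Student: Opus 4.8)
The plan is to reduce the classification to the structural dichotomy already available for $W_2$-rings, namely Proposition \ref{w2isdvorvn}, and then to verify that the three outcomes are genuinely mutually exclusive. I would split the argument into two halves: first the easy half, showing that each of the three listed topologies \emph{is} a $W_2$-topology and that no two of the classes overlap (this justifies the word \emph{exactly}); then the main half, showing that every $W_2$-topology falls into one of the three classes.

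For the first half: a $V$-topology is a $W_1$-topology, hence a $W_2$-topology; a strict $V^{2}$-topology is, after replacing it by a locally equivalent one, induced by a multi-valuation ring with two maximal ideals, which is a $W_2$-ring, hence a $W_2$-topology; and a $DV$-topology is a $W_2$-topology by Proposition \ref{dvw2notvn}. For exclusivity: a strict $V^{2}$-topology is by definition not a $V$-topology; no $DV$-topology is a $V$-topology by Proposition \ref{dvnotv}; and no $DV$-topology is a $V^{n}$-topology for any $n\geq1$, in particular not a strict $V^{2}$-topology, again by Proposition \ref{dvw2notvn}.

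For the main half, let $\t$ be a $W_2$-topology on a field $K$ of characteristic $p>0$. Since being a $W_2$-, a $V$-, a $V^{2}$-, or a $DV$-topology is invariant under local equivalence, I may use the definition of $W_2$-topology to replace $(K,\t)$ by a locally equivalent pair in which $\t$ is induced by a $W_2$-subring $R$ of $K$ with $\Frac(R)=K$, and then apply Proposition \ref{w2isdvorvn} to $R$. In its case (1), $R$ is co-embeddable with the ring $R_\pi$ attached to some dense pre-diffeo-valuation data on $K$, so $\t$ is a $DV$-topology by definition. In its case (2), $R$ is co-embeddable with a multi-valuation ring of $K$, so $\t$ is a $V^{n}$-topology for some $n\geq1$, and it remains only to show that $n$ can be taken to be at most $2$. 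For this I would mimic the argument inside the proof of Proposition \ref{dvw2notvn}: pass to a sufficiently saturated elementary extension $(K^{*},\t^{*})$, which is locally equivalent to $(K,\t)$ and still a $W_2$-topology; by saturation realise $\t^{*}$ by a multi-valuation ring $S$ of $K^{*}$; note that $S$ is $\t^{*}$-bounded, so by \cite[Lemma 3.7]{dp5} there is a $W_2$-overring $S'$ of $S$ inducing $\t^{*}$; and since an overring of a multi-valuation ring is again a multi-valuation ring and $S'$ is a $W_2$-ring, $S'$ has at most two maximal ideals. If it has one, $S'$ is a valuation ring and $\t^{*}$, hence $\t$, is a $V$-topology; if it has two, $\t^{*}$, hence $\t$, is a $V^{2}$-topology, which is then either a $V$-topology or a strict $V^{2}$-topology. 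Combining the two cases, $\t$ is a $V$-topology, a strict $V^{2}$-topology, or a $DV$-topology, and by the first half exactly one of these holds.

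The one non-formal point in this plan is the implication "a $W_2$-ring which is a multi-valuation ring has at most two maximal ideals", i.e.\ that the cube rank of the submodule lattice of a multi-valuation ring is its number of maximal ideals; this is exactly the sort of fact that the $W_n$-ring machinery of \cite{dp5} (and \cite[Lemma 3.7]{dp5}) supplies, and it is used in the very same way in the proof of Proposition \ref{dvw2notvn} reproduced above, so I do not expect it to cause trouble. Everything else is bookkeeping: carrying the properties $V$, $V^{2}$ and $DV$ back and forth along local equivalence and along the passage to a saturated extension.
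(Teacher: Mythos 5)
Your proposal is correct and takes essentially the same route as the paper: reduce via Proposition \ref{w2isdvorvn} to the dichotomy between dense pre-diffeo-valuation data and a multi-valuation ring, bound the number of maximal ideals by the saturation plus \cite[Lemma 3.7]{dp5} argument already used in the proof of Proposition \ref{dvw2notvn}, and obtain exclusivity from Propositions \ref{dvnotv} and \ref{dvw2notvn}. The only (immaterial) difference is that the paper first disposes of the characteristic-$0$ case by citing \cite[Theorem 8.5]{dp5}, whereas you work directly under the section's standing positive-characteristic assumption.
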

\begin{proof}
    If the characteristic of the underlying field is 0, then this is proven in \cite[Theorem 8.5]{dp5}. Thus, we may assume that the characteristic of the underlying field is positive. $V$- and $V^{2}$-topologies are $W_2$-topologies. $DV$-topologies are $W_2$-topologies too, by Proposition \ref{dvw2notvn}. If $\t$ is a $W_2$-topology on a field $K$, then upon saturating we may assume that it is induced by some $W_2$-ring $R$ of $K.$ By Proposition \ref{w2isdvorvn}, since $\char(K)>0,$ we get that either $R$ is co-embeddable with some $R_\pi$ given by dense pre-diffeo-valuation data on $K,$ or that $R$ is co-embeddable with some multi-valuation ring of $K.$ The first case implies that $\t$ is a $DV$-topology, and the second case implies that $\t$ is $V^{n}$ for some $n\geq 1.$ As in the proof of Proposition \ref{dvw2notvn}, $\t$ has to be either a $V$-topology or a $V^{2}$-topology. Finally, all three cases are exclusive by Propositions \ref{dvnotv} and \ref{dvw2notvn}.
\end{proof}

We conclude several corollaries from Theorem \ref{classw2}.

\begin{cor}
\label{perfw2}
Let $K$ be a field of positive characteristic.
    \begin{enumerate}[wide]
        \item If $K$ admits a $DV$-topology $\t$, then $K$ is not perfect.

        \item If $\t$ is a $W_2$-topology on $K,$ then $\t$ is a  $V^{2}$-topology. 
    \end{enumerate}
\end{cor}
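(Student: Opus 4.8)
The plan is to deduce both items from the trichotomy of Theorem~\ref{classw2} together with one elementary observation about $p$-th powers inside the valuation ring of a dense pre-diffeo-valued field, where $p=\char(K)$.

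For the first item I would argue by contradiction: suppose $K$ is perfect and carries a $DV$-topology $\t$. As in the proof of Proposition~\ref{dvw2notvn}, after passing to a sufficiently saturated elementary extension of $(K,\t)$ --- which preserves both the fact that $\t$ is a $DV$-topology and the perfection of $K$, the latter being a first-order property of the field --- one may assume that $\t$ is induced by genuine dense pre-diffeo-valued data $(\O,D,R,\phi,\dd,\pi)$ on $K$ itself, so that Definition~\ref{pddvf} and Lemmas~\ref{ryq} and~\ref{top} apply verbatim. Then $Q=\ker\dd\subsetneq R\subseteq\O$, the inclusion $R\subseteq\O$ being part of the data and the inclusion $Q\subsetneq R$ being proper because $\dd$ surjects onto the nonzero field $k$. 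The crux is that $a^{p}\in Q$ for every $a\in\O$: when $p=2$ this is exactly the requirement $\O^{2}\subseteq Q$ in Definition~\ref{pddvf}, and when $p$ is odd $\pi$ is a derivation, so the Leibniz rule gives $\pi(a^{p})=p\,a^{p-1}\pi(a)=0$, whence $a^{p}\in\ker\pi=Q$. Now perfection of $K$ provides, for each $a\in\O$, an element $c\in K$ with $c^{p}=a$; from $p\cdot v(c)=v(a)\ge 0$ and torsion-freeness of the value group we get $v(c)\ge 0$, i.e.\ $c\in\O$, and hence $a=c^{p}\in Q$. Thus $\O=Q$, contradicting $Q\subsetneq\O$. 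This proves the contrapositive: a field admitting a $DV$-topology is imperfect.

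For the second item, assume $K$ is perfect and let $\t$ be a $W_2$-topology on $K$. By Theorem~\ref{classw2}, $\t$ is a $V$-topology, a strict $V^{2}$-topology, or a $DV$-topology. A $V$-topology is induced by a single valuation ring and is therefore a $V^{2}$-topology, a strict $V^{2}$-topology is a $V^{2}$-topology by definition, and the third case is ruled out by the first item together with the perfection of $K$. Hence $\t$ is a $V^{2}$-topology. I expect the only delicate point of the whole argument to be the model-theoretic reduction in the first item --- replacing $(K,\t)$ by a saturated model on which the $DV$-topology is actually carried by dense pre-diffeo-valuation data --- after which both statements are immediate from the $p$-th power computation and Theorem~\ref{classw2}; I do not anticipate any substantial obstacle.
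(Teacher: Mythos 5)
Your argument is correct and is essentially the paper's: reduce via local equivalence to a field genuinely carrying dense pre-diffeo-valuation data, observe $\O^{p}\subseteq Q$ (by definition when $p=2$, by the Leibniz rule when $p$ is odd), so perfection forces $\O=\O^{p}\subseteq Q$ and hence $\dd=0$, contradicting surjectivity of $\dd$; item 2 then follows from the trichotomy of Theorem~\ref{classw2} once the $DV$ case is excluded. Two minor remarks: the reduction is more cleanly justified by transferring the purely field-theoretic (hence local) sentence expressing (im)perfection along the local equivalence to the model that actually carries the data, rather than claiming that a sufficiently saturated elementary extension of $(K,\t)$ itself carries the full data (recovering $\O$, $D$, $\pi$, $\dd$ there would need extra argument), and you are right to read item 2 as carrying the perfection hypothesis, exactly as the paper's terse proof implicitly does, since otherwise a $DV$-topology would contradict Proposition~\ref{dvw2notvn}.
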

\begin{proof}
\begin{enumerate}[wide]
    \item We may assume that $\t$ is induced by some dense pre-diffeo-valuation data $(\O,D,R,\dd,\pi,\phi)$ in $K.$ Then, if $\char(K)=p>0,$ we know that $\O^{p}\subseteq Q\subseteq R\subseteq\O.$ If $K$ is perfect, then $\O=\O^{p}$ and $\dd$ is trivial, which is absurd. 

    \item This follows from the classification given by Theorem \ref{classw2}.
    \qedhere
\end{enumerate}
\end{proof}

\section*{Acknowledgments}
The author would like to thank Will Johnson for bringing Yang Yang's master's thesis to his attention. The author has received funding from the European Union’s Horizon 2020 research and innovation program under the Marie Sk\lw odowska-Curie grant agreement No 945332. 
\includegraphics*[scale = 0.028]{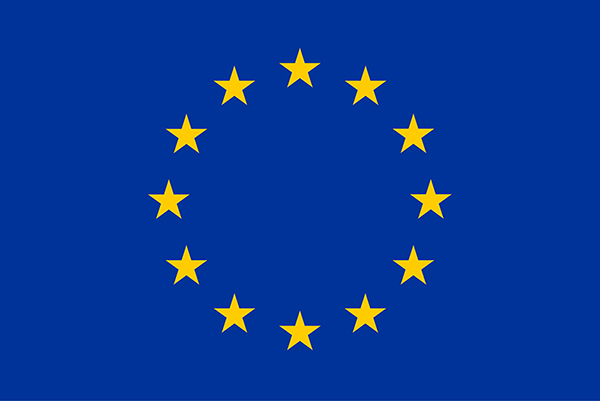}

\bibliographystyle{alpha}
\bibliography{bib}

\end{document}